\newif\ifdraft
\newsavebox{\savepar}
\newtheorem{thm}{Theorem}[section]
\newtheorem{lem}[thm]{Lemma}
\newtheorem{prop}[thm]{Proposition}
\newtheorem{defn}[thm]{Definition}
\newtheorem{rem}[thm]{Remark}
\theoremstyle{rem}
\numberwithin{equation}{section}
\newcommand{\pa} {\partial}
\newcommand{\R}{\mathbb{R}}
\newcommand{\N}{\mathbb{N}}
\newcommand{\ds}{\displaystyle}
\renewcommand{\leq}{\leqslant}
\newcommand{\C}{\mathbb{C}}
\def\textmatrix#1&#2\\#3&#4\\{\bigl({#1 \atop #3}\ {#2 \atop #4}\bigr)}
\def\dispmatrix#1&#2\\#3&#4\\{\left({#1 \atop #3}\ {#2 \atop #4}\right)}
\newcommand{\norm}[1]{\left\Vert#1\right\Vert}
\def\hmath$#1${\texorpdfstring{{\rmfamily\textit{#1}}}{#1}}
\newcommand{\rd}{{\rm d}}
\title[]{\sc Some controllability results for Linearized Compressible Navier-Stokes System with Maxwell's Law}
\author{}
\author{Sakil Ahamed}
\address{Sakil Ahamed, Debanjana  Mitra \newline \indent Department of Mathematics, Indian Institute of Technology Bombay, \newline \indent 
	Powai, Mumbai - 400076, India.}
\email{sakil@math.iitb.ac.in, deban@math.iitb.ac.in}
\author[Debanjana Mitra]{Debanjana Mitra$^{*}$}
\thanks{$^{*}$ Corresponding author}
\thanks{Debanjana Mitra acknowledges the support from an INSPIRE faculty fellowship, RD/0118-DSTIN40-001.}
\keywords{Maxwell's system,  Null controllability, Approximate controllability, finite dimensional control, Gaussian beams.}
\subjclass[2010]{35Q30, 93B05, 93B07, 35Q35}
\begin{document}

	\begin{abstract}
	In this article, we study the control aspects of the one-dimensional compressible Navier-Stokes equations with Maxwell's law linearized around a constant steady state with zero velocity. We consider the linearized system with Dirichlet boundary conditions and with interior controls. We prove that the system is not null controllable at any time
using localized controls in density and stress equations and even everywhere control in the velocity equation. However, we show that the system is null controllable at any time if the control acting in density or stress equation is everywhere in the domain. This is the best possible null controllability result obtained for this system. Next, we show that the system is approximately controllable at large time using localized controls. Thus, our results give a complete understanding of the system in this direction. 
		
	\end{abstract}
	
	\maketitle

	
	\section{ Introduction and main results}

Compressible Navier-Stokes system is one of the important models describing fluid flows. We consider the compressible Navier-Stokes equations with Maxwell's law (see, \cite{hu2019global} and references therein). 
The one dimensional compressible Navier-Stokes system with Maxwell's Law is given by :
\begin{equation}\label{eq1}
		\left.
		\begin{aligned}
			&\partial_t\hat{\rho}+\partial_x(\hat{\rho}\hat{u})=0 \qquad\qquad\qquad\qquad\text{ in }(0, T)\times (0, \pi),\\&\partial_t\left( {\hat{\rho}\hat u}\right) +\partial_x\left( {\hat{\rho}\hat u^2}\right) +\partial_xp =\partial_x\hat{S}\qquad\text{in }(0, T)\times (0, \pi),\\&\kappa\partial_t\hat{S}+\hat{S}=\mu\partial_x\hat{u}\qquad\qquad\qquad\qquad\text{in }(0, T)\times (0, \pi),
		\end{aligned}
		\right\}
	\end{equation}
where the pressure $p$ satisfies the following constitutive law :
	\begin{equation}
		p(\rho)=a\rho^{\gamma},\qquad a>0,\: \gamma\geq 1.
	\end{equation}
In the system, $\hat{\rho}$, $\hat{u}$ and  $\hat{S}$ denote the density, the velocity and the stress tensor of the fluid, respectively. Here $\mu$ and $\kappa$ are positive constants where $\mu $ denotes the fluid viscosity and $\kappa$ is the relaxation time describing the time lag in the response of the stress tensor to the velocity gradient. 

Relation $\eqref{eq1}_3$ is first proposed by Maxwell, in order to describe the relation of stress tensor and velocity gradient for a non-simple fluid.

In this paper, we study the controllability properties of the one-dimensional compressible Navier-Stokes system with Maxwell's law and with Dirichlet boundary conditions
linearized around a stationary trajectory of \eqref{eq1} with zero velocity. This is the first step to study the controllability aspects of  \eqref{eq1} around such trajectory. We have the  following observation. 
\begin{rem}
Suppose $\left( \rho_s(x), u_s(x), S_s(x)\right), x\in [0,\pi] $ is a stationary trajectory to the system \eqref{eq1} with $u_s(x)=0$, for all $x\in [0,\pi]$. 
Then from the equations, we get $S_s(x)=0$ and $\rho_s(x)=\rho_s$ (constant), for all $x\in [0,\pi]$. 
Thus, the stationary trajectory to the system with zero velocity can only be in the form $(\rho_s, 0, 0)$ where $\rho_s>0$, a constant.
\end{rem}

In this article, we consider the system linearized around a constant steady state $\left( {\rho_s},0,0\right) $, where ${\rho_s}>0 $, with distributed controls $f_1, f_2$ and $f_3$:
	\begin{equation}\label{eq2}
		\begin{dcases}
			\partial_t\rho+{\rho_s}\partial_xu= \mathbbm{1_{\mathcal{O}_{1}}} f_1,&\text{ in }(0, T)\times (0, \pi),\\
			\partial_tu+a\gamma{\rho_s}^{\gamma-2}\partial_x\rho-\frac{1}{{\rho_s}}\partial_xS=\mathbbm{1_{\mathcal{O}_{2}}} f_2,&\text{ in }(0, T)\times (0, \pi),\\
			\partial_tS+\frac{1}{\kappa}S-\frac{\mu}{\kappa}\partial_xu=\mathbbm{1}_{\mathcal{O}_{3}} f_3,&\text{ in }(0, T)\times (0, \pi),\\
		\end{dcases}
	\end{equation}
	where $\mathbbm{1_{\mathcal{O}_{j}}}$ is  the characteristic function of an open set ${\mathcal{O}}_j \subset (0,\pi),$  $j=1,2,3$. We choose the following initial and boundary conditions for the system \eqref{eq2}:
	\begin{equation}\label{bdd-ini}
		\begin{dcases}
			\rho(0)=\rho_0,\quad u(0)=u_0,\quad S(0)=S_0, & \text{ in } (0,\pi),\\
			u(t,0)=0,\quad u(t,\pi)=0,& \text{ in } (0, T).
		\end{dcases}
	\end{equation}
	
\begin{defn}
	The system \eqref{eq2}-\eqref{bdd-ini} is null controllable in $(L^2\left( 0,\pi\right))^3$ at time $T>0$, if for any initial condition $\left( \rho_0,u_0,S_0\right)^\top\in \left(L^2(0,\pi) \right)^3  $, there exist controls $f_i\in L^2\left(0,T ; L^2(\mathcal{O}_{i}) \right), i=1,2,3, $ such that the corresponding solution $\left(\rho,u,S \right) $ of \eqref{eq2}-\eqref{bdd-ini} satisfies
		\begin{equation}
			\left(\rho,u,S \right)^\top(T,x)=(0, 0, 0)^\top \text{ for all } x\in (0,\pi).
		\end{equation}
\end{defn}
\begin{defn}
	The system \eqref{eq2}-\eqref{bdd-ini} is approximately controllable in the space $(L^2\left( 0,\pi\right))^3$ at time $T>0$, if for any initial condition $\left( \rho_0,u_0,S_0\right)^\top\in \left(L^2(0,\pi) \right)^3  $ and any other $\left( \rho_T,u_T,S_T\right)^\top\in \left(L^2(0,\pi) \right)^3  $ and any $\epsilon>0$, there exist controls $f_i\in L^2\left(0,T ; L^2(\mathcal{O}_{i}) \right), i=1,2,3, $ such that the corresponding solution $\left(\rho,u,S \right) $ of \eqref{eq2}-\eqref{bdd-ini} satisfies
	\begin{equation}
	\Big\|(\rho, u, S)^\top(T, \cdot)-(\rho_T, u_T, S_T)^\top \Big\|_{(L^2(0,\pi))^3} < \epsilon. 
	\end{equation}
\end{defn}

Before stating our main results, we mention some related results in this direction from the literature. The existence of the solution of the compressible Navier-Stokes system with Maxwell's law along with the bolw-up results has been studied, for example, in \cite{HuRacke, hu2019global, WangHu} and references therein. If $\kappa = 0$, then the Maxwell's law \eqref{eq1}$_3$ turns into the Newtonion law $\hat{S}=\mu\partial_x\hat{u}$ and the equation \eqref{eq1} becomes Navier-Stokes system of a viscous, compressible, isothermal barotropic fluid (density is function of pressure only), in a bounded domain $(0,\pi)$. The controllability of the compressible Navier-Stokes system around a trajectory has been studied extensively. The compressible Navier-Stokes system linearized around constant steady state $(\rho_s, 0)$ yields a system coupled between an ODE and a parabolic equation. This system with Dirichlet boundary is considered in \cite{chowdhury2012controllability}. There, the authors have proved that the linearized system is not null controllable  using a localized $L^2$-control acting in the velocity equation. However, the system is approximately controllable at any time $T>0$ using a velocity control. The system linearized around $(\rho_s, u_s)$, where $u_s\neq 0$, with periodic boundary conditions is considered in \cite{CM15, chowdhury2014null}. In this case, the system is coupled between a transport and a parabolic equation and the null controllability result using a velocity control is obtained for large enough time $T$. In \cite{Maity15}, the lack of null controllability of the compressible Navier-Stokes system  linearized around $(\rho_s, u_s)$ has been studied in detail. In the case $u_s=0$, the system is not null controllable using any localized $L^2$-controls at any time $T>0$. In the case $u_s\neq 0$, the same result holds for small time. 
The local null controllability of the compressible Navier-Stokes system around a trajectory with non-zero velocity at large time $T>0$ has been obtained in \cite{ervedoza1, ervedoza2, ervedoza2018local, MR-17, MRR-17, MN-19}. 

Another system relevant to our case is the linearized Maxwell system. In \cite{renardy2005viscoelastic}, the control aspects of one-dimensional shear flows of linear Maxwell and Jeffreys system with single or several relaxation modes have been studied. It is worth mentioning that our lineraized system \eqref{eq2} is similar to the viscoelastic shear flow associated to the multimode Maxwell system considered in \cite{renardy2005viscoelastic} in a sense that both cases the system is coupled between ODEs and the spectrum of the linear operators behaves similarly. However, there are slight differences in the coefficients of the equations appearing in \cite{renardy2005viscoelastic} compared to our case.  
In \cite{renardy2005viscoelastic}, the author has proved that the single-mode Maxwell system is exactly cotrollable using a localized velocity control at sufficiently large time $T$, whereas the multimode Maxwell system is approximately controllable  using a localized velocity control at sufficiently large time $T$. We note that both of the cases the hyperbolic behavior of the spectrum of the linear operator is imposing the geometric control conditions and hence the controllability using localized control can be obtained at large time $T$. 
The controllability of viscoelastic flows in higher dimension has also been available, for instance, in \cite{CARA2, chowdhury2017approximate, MMRvisco} and references therein. 

We also mention that the coupled system may arise to model wave equation with memory terms. After doing a suitable change of variables for the memory term, the wave equation with memory term can be written as coupled hyperbolic-ODE system. The controllability of coupled equations has been studied for example, in \cite{zuazuavisco, biccari2019null, ahamed2022lack} and references therein. In \cite{biccari2019null}, the wave equation with memory  on a one-dimensional torus is considered.
Here the authors have proved that the equation is null controllable in a regular space using a moving control at a sufficiently large time. Also they have shown that the system is not null controllable at any time $T>0$ using a localized control.

Now we state our main results obtained in this article. Our first main result is the lack of null controllability of \eqref{eq2}-\eqref{bdd-ini} for initial data in $(L^{2}(0,\pi))^3$ :
	\begin{thm}\label{thm:lack-1-bis}
		Let 
		\begin{equation*}
			\mathcal{O}_{1} \subset (0,\pi), \quad \mathcal{O}_{2} \subseteq  (0,\pi) \quad \mathcal{O}_{3} \subset  (0,\pi),
		\end{equation*}
		be such that $(0,\pi)\backslash \overline{\mathcal{O}_{1}\cup \mathcal{O}_{3}}$ is a nonempty open subset of $(0,\pi)$. Then the system \eqref{eq2}-\eqref{bdd-ini} is not null controllable in $(L^{2}(0,\pi))^3$ at any time $T>0,$ by interior controls  $f_1\in L^2(0,T; L^2(0,\pi))$ supported in  $\mathcal{O}_1$, $f_2\in L^2(0,T; L^2(0,\pi))$ supported in  $\mathcal{O}_2$ and $f_3\in L^2(0,T; L^2(0,\pi))$ supported in $\mathcal{O}_3$.
	\end{thm}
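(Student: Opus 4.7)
The plan is to argue by duality. By HUM, the null controllability of \eqref{eq2}--\eqref{bdd-ini} at time $T$ is equivalent to the observability inequality
\begin{equation*}
\|(\sigma, v, \tau)(0)\|_{(L^2(0,\pi))^3}^{2} \leq C_T \int_{0}^{T} \left( \int_{\mathcal{O}_1} |\sigma|^2 + \int_{\mathcal{O}_2} |v|^2 + \int_{\mathcal{O}_3} |\tau|^2 \right) dx\, dt
\end{equation*}
holding for every solution $(\sigma, v, \tau)$ of the homogeneous adjoint system
\begin{equation*}
\begin{dcases}
\partial_t \sigma + a\gamma \rho_s^{\gamma-2}\partial_x v = 0, \\
\partial_t v + \rho_s \partial_x \sigma - \tfrac{\mu}{\kappa}\partial_x \tau = 0, \\
\partial_t \tau - \tfrac{1}{\rho_s}\partial_x v - \tfrac{1}{\kappa}\tau = 0,
\end{dcases}
\end{equation*}
with $v(t,0) = v(t,\pi) = 0$, obtained by formal integration by parts. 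The goal is to construct a sequence of adjoint solutions $P_{n_0}$ with unit initial norm whose observation side tends to $0$.

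The first step is a spectral analysis. Substituting $v_n(x) = \sin(nx)$ and $\sigma_n(x), \tau_n(x) \propto \cos(nx)$ reduces the eigenvalue problem for the adjoint generator to the cubic dispersion
\begin{equation*}
\lambda^3 - \tfrac{1}{\kappa}\lambda^2 + n^2\!\left(a\gamma \rho_s^{\gamma-1} + \tfrac{\mu}{\kappa \rho_s}\right)\!\lambda - \tfrac{a\gamma\rho_s^{\gamma-1}}{\kappa}\, n^2 = 0.
\end{equation*}
Asymptotic analysis as $n\to\infty$ yields three branches of roots: two hyperbolic ones $\lambda_n^{\pm} \sim \pm inc$ with $c^2 = a\gamma\rho_s^{\gamma-1} + \mu/(\kappa\rho_s)$, and a \emph{slow} branch with real positive limit $\lambda_n^{(0)} \to \lambda_{\infty} = a\gamma\rho_s^{\gamma}/(a\gamma\rho_s^{\gamma}\kappa + \mu) > 0$. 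The slow branch carries the decisive polarization: both $\sigma_n$ and $\tau_n$ have $L^2$-amplitude of order $n$, while $v_n = \sin(nx)$ has $L^2$-amplitude of order $1$, so that $\|v_n\|_{L^2}/(\|\sigma_n\|_{L^2} + \|\tau_n\|_{L^2}) = O(1/n)$, i.e., the $v$-component is asymptotically negligible in the slow-branch eigenmodes.

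The core of the proof is a Gaussian wave-packet construction on the slow branch. Pick $x_0 \in (0,\pi) \setminus \overline{\mathcal{O}_1 \cup \mathcal{O}_3}$ (nonempty by hypothesis) and let $d := \mathrm{dist}(x_0, \overline{\mathcal{O}_1 \cup \mathcal{O}_3}) > 0$. For a large parameter $n_0$ and $\varepsilon := 1/n_0$, define
\begin{equation*}
P_{n_0}(t,x) := \frac{1}{N_{n_0}} \sum_n e^{-(n-n_0)^2 \varepsilon/2}\, e^{-inx_0}\, e^{\lambda_n^{(0)} t}\, P_n^{(0)}(x),
\end{equation*}
where $P_n^{(0)} = (\sigma_n, v_n, \tau_n)$ is the slow-branch eigenfunction and $N_{n_0}$ is chosen so that $\|P_{n_0}(0)\|_{(L^2)^3} = 1$. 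By Poisson summation, the components $\sigma_{n_0}(0,\cdot)$ and $\tau_{n_0}(0,\cdot)$ are Gaussian wave packets of spatial width $O(\sqrt{\varepsilon})$ concentrated at $x_0$ with carrier frequency $n_0$, while $v_{n_0}(0,\cdot)$ is a similar packet multiplied by the small polarization factor $O(1/n_0)$. Since $\lambda_n^{(0)} = \lambda_\infty + O(1/n^2)$ has vanishing derivative in $n$ at leading order, the packet has zero group velocity and remains concentrated at $x_0$ for all $t \in [0,T]$, up to the benign global factor $e^{\lambda_\infty t}$. One then estimates
\begin{equation*}
\int_0^T \!\int_{\mathcal{O}_1} |\sigma_{n_0}|^2 + \int_0^T \!\int_{\mathcal{O}_3} |\tau_{n_0}|^2 \lesssim e^{2\lambda_\infty T}\, e^{-c d^2 n_0}, \qquad \int_0^T \!\int_{\mathcal{O}_2} |v_{n_0}|^2 \lesssim e^{2\lambda_\infty T}/n_0^2,
\end{equation*}
both of which tend to $0$ as $n_0 \to \infty$, contradicting the observability inequality whose left-hand side is $1$.

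The main technical obstacle is making this Gaussian beam fully rigorous: verifying the slow-branch polarization asymptotics uniformly in $n$, controlling the discrete-sum-to-Gaussian error (including the spurious reflected copy at $-x_0$ arising from the $\cos(nx)$, $\sin(nx)$ structure, which falls outside $(0,\pi)$), and checking that the $O(1/n^2)$ correction to $\lambda_n^{(0)}$ produces only negligible dispersive spreading over $[0,T]$. The crucial conceptual point is that the slow branch has vanishing group velocity, so the obstruction persists at \emph{every} $T > 0$ and the conclusion is independent of any geometric control condition.
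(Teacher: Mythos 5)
Your proposal is correct, and it takes a genuinely different technical route from the paper. Both proofs are Gaussian beam arguments that exploit the same two structural facts: the adjoint operator has a ``slow'' branch of eigenvalues converging to a finite real limit (so the associated wave packet has vanishing group velocity and stays put at $x_0$), and the eigenmodes on that branch carry an $O(1/n)$ polarization in the velocity component. What differs is the construction of the packet. The paper builds a WKB-type approximate solution on $\mathbb{R}$ with complex phase $\varphi(x)=\tfrac{i}{2}(x-x_0)^2+(x-x_0)$ (see \eqref{ansatz-sigma}--\eqref{ansatz-tildeS}), then solves the auxiliary problem \eqref{eqadj-sharp} with the $O(k^{-3/4})$ residual and boundary traces to pass to an exact solution of \eqref{eqadj}; a separate trace estimate (\cref{trace-GB}) is needed. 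You instead form a Gaussian superposition $\sum_n e^{-(n-n_0)^2\varepsilon/2}e^{-in x_0}e^{\lambda_n^{(0)}t}P_n^{(0)}(x)$ of the \emph{exact} slow-branch eigenfunctions. The advantage of your route is that each summand satisfies the equation and the Dirichlet conditions exactly, so no source-correction or trace lemma is required; the cost is that you must carry out the full spectral analysis of \cref{secspec} (which the paper develops for the positive results anyway) and replace the explicit WKB error bounds by Poisson-summation estimates controlling the discrete packet, the reflected copy from the $\cos/\sin$ structure, and the coherence over $[0,T]$. Two small bookkeeping remarks: your adjoint system is the \emph{unweighted} $L^2$-adjoint rather than the adjoint in the weighted inner product \eqref{innerproduct} used by the paper, and you phrase observability with the norm of $(\sigma,v,\tau)(0)$ (backward adjoint), which is why you find $\lambda_\infty>0$ whereas the paper's $\omega_0<0$; both are consistent formulations, but the dispersion relation you wrote is $\mathcal F_n(-\lambda)=0$ in the paper's notation, so the sign conventions should be tracked carefully if the two are to be reconciled.
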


Now we specially consider the cases when $f_1=0$ and $f_3=0$ in \eqref{eq2}-\eqref{bdd-ini} and only the control in the velocity equation is active. 
Observe that, if $f_3=0$, from the stress equation of \eqref{eq2},
	using the boundary condition \eqref{bdd-ini}, we deduce 
	\begin{equation*}
		\frac{d}{dt}\left( \int_{0}^{\pi}S(t,x)e^{\frac{t}{\kappa}}\:\rd x\right)=0,
	\end{equation*}
and therefore, 
	\begin{equation*}
		\int_{0}^{\pi}S(T,x)e^{\frac{T}{\kappa}}\:\rd x=\int_{0}^{\pi}S_0(x)\:\rd x.
	\end{equation*}
	Thus if the system \eqref{eq2}-\eqref{bdd-ini} with $f_3=0$  is null controllable in time $T>0$ then necessarily
	\begin{equation}\label{stressavg}
		\int_{0}^{\pi}S_0(x)\:\rd x=0.
	\end{equation}
	Similarly, if the system \eqref{eq2}-\eqref{bdd-ini} with $f_1=0$  is null controllable in time $T>0$ then necessarily
	\begin{equation}\label{denseavg}
		\int_{0}^{\pi}\rho_0(x)\:\rd x=0.
	\end{equation}

	\noindent
	Now we introduce the space
	\begin{equation}
		L_m^2(0, \pi)=\left\lbrace f\in L^2(0, \pi) : \: \int_{0}^{\pi}f(x)\rd x=0\right\rbrace.
	\end{equation}	
In view of \eqref{denseavg} and \eqref{stressavg}, it is clear that the possible space for \eqref{eq2}-\eqref{bdd-ini} with $f_1=0$ and $f_3=0$ to be null controllable using control acting only the velocity equation is $L_m^2(0, \pi)\times L^2(0,\pi)\times L_m^2(0, \pi)$ instead of $(L^2(0,\pi))^3$. With this observation, in the case when $f_1=0=f_3$, Theorem \ref{thm:lack-1-bis} can be reformulated as below.
\begin{thm}\label{thm:lack-1-bis-velo}
	Let $f_1=0=f_3$ in \eqref{eq2}-\eqref{bdd-ini}. Let $\mathcal{O}_{2} \subseteq  (0,\pi)$. Then the system \eqref{eq2}-\eqref{bdd-ini} is not null controllable in 
	$L_m^{2}(0,\pi) \times L^{2}(0,\pi) \times L_m^{2}(0,\pi),$ at any time $T>0,$ by an interior control $f_2\in L^2(0,T; L^2(0,\pi))$ supported in  $\mathcal{O}_2$ acting only in the velocity equation. 
	\end{thm}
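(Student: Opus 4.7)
The plan is to convert the null controllability problem to a conditional observability inequality by the Hilbert Uniqueness Method, and then to exhibit an explicit sequence of adjoint ``slow'' eigenmodes that violate it. Let $(\phi,\psi,\chi)$ denote the adjoint state of \eqref{eq2}-\eqref{bdd-ini} with only the velocity control active. A straightforward integration by parts using $u|_{x=0,\pi}=0$ shows that $(\phi,\psi,\chi)$ is governed by
\begin{equation*}
-\partial_t\phi=-a\gamma\rho_s^{\gamma-2}\partial_x\psi,\quad -\partial_t\psi=-\rho_s\partial_x\phi+\frac{\mu}{\kappa}\partial_x\chi,\quad -\partial_t\chi=\frac{1}{\rho_s}\partial_x\psi+\frac{1}{\kappa}\chi
\end{equation*}
on $(0,T)\times(0,\pi)$, with boundary condition $\psi(t,0)=\psi(t,\pi)=0$. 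Because the constrained target space $L_m^{2}(0,\pi)\times L^{2}(0,\pi)\times L_m^{2}(0,\pi)$ is the orthogonal complement in $(L^{2}(0,\pi))^3$ of the two-dimensional subspace of ``constant density'' and ``constant stress'' vectors, null controllability in this space is equivalent to the observability-type estimate
\begin{equation*}
\inf_{c_1,c_3\in\R}\Bigl\{\norm{\phi(0)-c_1}_{L^2}^{2}+\norm{\psi(0)}_{L^2}^{2}+\norm{\chi(0)-c_3}_{L^2}^{2}\Bigr\}\leq C\int_0^T\int_{\mathcal{O}_2}|\psi(t,x)|^2\:\rd x\,\rd t,
\end{equation*}
holding uniformly over all adjoint terminal data.

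To refute this inequality, I would carry out a separation of variables with the trigonometric basis $\phi(t,x)=\hat\phi_n(t)\cos(nx)$, $\psi(t,x)=\hat\psi_n(t)\sin(nx)$, $\chi(t,x)=\hat\chi_n(t)\cos(nx)$, which is compatible with the Dirichlet condition on $\psi$ and reduces the adjoint PDE to a one-parameter family of $3\times 3$ linear ODE systems. The associated characteristic polynomial
\begin{equation*}
\lambda^{3}+\frac{1}{\kappa}\lambda^{2}+\left(a\gamma\rho_s^{\gamma-1}+\frac{\mu}{\rho_s\kappa}\right)n^{2}\lambda+\frac{a\gamma\rho_s^{\gamma-1}}{\kappa}n^{2}=0
\end{equation*}
admits, for $n$ large, two conjugate ``hyperbolic'' roots of order $\pm i n$ together with a third real ``slow'' root $\lambda_n^{(r)}$ that stays bounded, with $\lambda_n^{(r)}\to-\lambda^{*}<0$. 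A direct expansion of the corresponding eigenvector shows that its velocity amplitude satisfies $\hat\psi_n=O(1/n)$, whereas the density and stress amplitudes are of order one; consequently the associated adjoint solution has $L^{2}$ norm of order one but its velocity observation $\int_0^T\int_{\mathcal{O}_2}|\psi|^{2}\:\rd x\,\rd t$ is of order $1/n^{2}$, uniformly in $T$ because the slow eigenvalue is bounded.

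To close the argument I would observe that $\int_0^{\pi}\cos(nx)\,\rd x=0$ for each $n\in\N$, so the slow eigenmodes automatically lie in the constrained dual space and the infimum over $(c_1,c_3)$ is attained at the origin and equals the full $L^{2}$ norm. Inserting this sequence into the reduced observability inequality yields a left-to-right ratio of order $n^{2}$, contradicting the existence of any finite $C$. The main technical step is the asymptotic analysis of the cubic characteristic polynomial: one must justify that the three roots are simple for all sufficiently large $n$ and compute sharp expansions of the associated eigenvectors so that the decay of the velocity trace is quantitative. These are exactly the spectral computations that drive the proof of Theorem~\ref{thm:lack-1-bis}; the additional observation specific to Theorem~\ref{thm:lack-1-bis-velo} is merely that the quasi-modes so constructed respect the zero-mean constraints on density and stress, which makes them valid test states in the constrained dual space.
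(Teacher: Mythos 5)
Your spectral argument is valid and takes a genuinely different route from the paper. The paper deduces \cref{thm:lack-1-bis-velo} from the Gaussian beam construction of \cref{thm:GB-L1}: one builds approximate adjoint solutions concentrated near an interior point $x_0$ with $O(1)$ density and stress components and $O(k^{-1})$ velocity component, then observes that the resulting initial data have zero mean because the amplitude function $\zeta$ is compactly supported in $(0,\pi)$. You instead exploit the explicit spectral information of \cref{spectrumofA} and the eigenfunction formula \eqref{equ-xi_n^*}: the adjoint eigenfunctions $\xi^*_{n,1}$ attached to the bounded real branch $\lambda_n^1\to\omega_0<0$ have density and stress amplitudes of order one, velocity amplitude $\lambda_n^1/(\rho_s n\psi_{n,1})=O(n^{-1})$, and automatically zero mean since $\int_0^\pi\cos(nx)\,\rd x=0$; taking $(\sigma^0,v^0,\tilde{S}^0)=\xi^*_{n,1}$ in the observability inequality of \cref{thobs1}(2), the left side is bounded below uniformly in $n$ (Riesz basis normalization together with $\lambda_n^1\to\omega_0$), while the right side is $O(n^{-2})$, a contradiction for any $T>0$ and any $\mathcal{O}_2\subseteq(0,\pi)$. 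Your route is shorter and more elementary, but it would not extend to \cref{thm:lack-1-bis}, where density and stress are also observed and the $O(1)$ components of $\xi^*_{n,1}$ remain visible on any open set; there the spatial localization of the Gaussian beam outside $\overline{\mathcal{O}_1\cup\mathcal{O}_3}$ is indispensable. Two small inexactnesses: the paper does not prove \cref{thm:lack-1-bis} by the spectral computations you cite in your closing remark; and the infimum over constants $c_1,c_3$ in your observability formulation is superfluous, since $\mathcal{Z}_{m,m}$ is invariant under both semigroups and null controllability in $\mathcal{Z}_{m,m}$ is therefore equivalent to the ordinary final-state observability inequality restricted to adjoint data in $\mathcal{Z}_{m,m}$ --- which is exactly the inequality your test sequence refutes.
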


 Our next main result regarding interior null controllability is the following :
	\begin{thm}\label{thm_pos}
		Let $f_2=0=f_3$ in \eqref{eq2}. Then for any $T>0$ and for any $\left( \rho_0, u_0, S_0\right)\in L^{2}(0,\pi) \times L^{2}(0,\pi) \times L_m^{2}(0,\pi)$, there exists a control $f_1\in L^2\left(0,T ; L^2(0,\pi) \right) $ acting everywhere in the density, such that $(\rho,u, S)$, the corresponding solution to \eqref{eq2}-\eqref{bdd-ini}, belongs
		to $C\left( [0,T];  L^{2}(0,\pi) \times L^{2}(0,\pi) \times L_m^{2}(0,\pi)\right) $ and satisfies
		\begin{equation}
			\rho(T,x)= u(T,x)=S(T,x)=0\text{ for all }x\in (0,\pi).
		\end{equation}
	\end{thm}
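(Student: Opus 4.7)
Since the control $f_1$ acts everywhere in the density equation, I would use it to treat $\rho$ essentially as a free input. Given any $\rho(t,x)$ with $\rho(0,\cdot) = \rho_0$ and $\rho(T,\cdot) = 0$, setting $f_1 := \partial_t \rho + \rho_s\partial_x u$ automatically realises the first equation of \eqref{eq2}. The problem therefore reduces to finding such a $\rho$ so that the $(u,S)$-subsystem
\[
\partial_t u = -a\gamma \rho_s^{\gamma-2}\partial_x \rho + \frac{1}{\rho_s}\partial_x S,\qquad \partial_t S + \frac{1}{\kappa}S = \frac{\mu}{\kappa}\partial_x u,
\]
with $u(t,0) = u(t,\pi) = 0$ and $(u,S)(0) = (u_0, S_0)$, satisfies $(u,S)(T) = (0,0)$. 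The invariant $\frac{d}{dt}\int_0^\pi S\,dx = -\frac{1}{\kappa}\int_0^\pi S\,dx$ keeps $S$ in $L_m^2(0,\pi)$ throughout, which is consistent with $S(T) = 0$.

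I would then apply the Hilbert Uniqueness Method to this reduced problem. By duality, null controllability is equivalent to the observability inequality
\[
\|(v,\tau)(T)\|^2 \leq C \int_0^T \|\partial_x v(t)\|_{L^2(0,\pi)}^2 \, dt
\]
for the forward adjoint $\partial_t v = \frac{\mu}{\kappa}\partial_x\tau$, $\partial_t \tau = \frac{1}{\rho_s}\partial_x v + \frac{1}{\kappa}\tau$, $v(t,0) = v(t,\pi) = 0$, restricted to the invariant subspace $\tau \in L_m^2(0,\pi)$. Expanding $v = \sum_{n\geq 1} v_n(t)\sin(nx)$ and $\tau = \sum_{n\geq 1}\tau_n(t)\cos(nx)$ (the zeroth cosine mode of $\tau$ is killed by $\int\tau\,dx = 0$), each pair $(v_n,\tau_n)$ solves a $2\times 2$ linear ODE with characteristic polynomial $\lambda^2 - \frac{1}{\kappa}\lambda + \frac{\mu n^2}{\kappa\rho_s} = 0$, whose roots are $\lambda_n^{\pm} \sim \frac{1}{2\kappa} \pm i n\sqrt{\mu/(\kappa\rho_s)}$ for large $n$. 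A standard averaging (Riemann–Lebesgue) argument for these oscillatory ODEs produces the bound $|v_n(T)|^2 + |\tau_n(T)|^2 \leq C\int_0^T |v_n(t)|^2\,dt$ with $C$ uniform in $n$; summing in $n$ and using $|v_n|^2 \leq n^2|v_n|^2$ (valid since $n \geq 1$) assembles the desired observability for every $T > 0$.

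The main obstacle is reconciling the HUM-produced control $\rho$ with the pointwise-in-time endpoint constraints $\rho(0) = \rho_0$ and $\rho(T) = 0$, while keeping $f_1 = \partial_t\rho + \rho_s\partial_x u$ in $L^2((0,T)\times(0,\pi))$. I would handle this by the lifting $\rho = \chi(t)\rho_0(x) + \rho^*(t,x)$ with $\chi \in C^\infty([0,T])$ satisfying $\chi(0) = 1$, $\chi(T) = 0$ and vanishing to high order at both endpoints, absorbing the term $-a\gamma\rho_s^{\gamma-2}\chi(t)\partial_x\rho_0$ as a known source in the $(u,S)$ initial-data-to-final-state map, and letting $\rho^*$ be the HUM control driving the residual state to zero. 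A density argument approximating $\rho_0 \in L^2$ by $H^1$ data, combined with the uniform observability constant, transfers the construction to the general case and guarantees that the resulting $f_1$ lies in $L^2((0,T)\times(0,\pi))$.
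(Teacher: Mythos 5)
Your starting idea---freeing $\rho$ as an input for the $(u,S)$-subsystem and defining $f_1 := \partial_t\rho + \rho_s\partial_x u$---is a genuinely different reduction from the paper's (which projects the full three-component system onto the eigenspaces $\mathbf{Z}_n$, checks the Hautus test for the $3\times 3$ pairs $(\mathcal{A}_n,\mathcal{B}_n)$, and sums the finite-dimensional Gramian controls with a uniform-in-$n$ bound on $W_{n,T}$). However, as written your argument has gaps that I do not see how to close without in effect returning to the paper's mode-by-mode construction.

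The first gap is the endpoint constraint on $\rho$. Standard HUM for the $(u,S)$-subsystem returns a control $\rho^*(t,\cdot)$ with no reason to satisfy $\rho^*(0,\cdot)=0$ or $\rho^*(T,\cdot)=0$; your lifting $\rho=\chi(t)\rho_0+\rho^*$ therefore produces $\rho(0)=\rho_0+\rho^*(0)\neq\rho_0$ and $\rho(T)=\rho^*(T)\neq 0$. Truncating $\rho^*$ to a subinterval $(\delta,T-\delta)$ and extending by zero would satisfy the endpoints but introduces jumps in $t$, so $\partial_t\rho^*$ acquires Dirac masses and $f_1\notin L^2$. To get both the endpoint conditions and $f_1\in L^2$ simultaneously you would need a weighted or constrained version of HUM that controls the trace of $\rho$ in time, which is a nontrivial modification you have not described.

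The second and more serious gap is the claim that $f_1=\partial_t\rho+\rho_s\partial_x u$ lands in $L^2((0,T)\times(0,\pi))$. In Fourier mode $n$ this reads $f_{1,n}=\rho_n'+n\rho_s u_n$. Even with your (plausible, though not proved) uniform observability for the $(u,S)$-subsystem, the HUM control minimizes $\int_0^T\|\rho\|^2$, not $\int_0^T\|\partial_t\rho+\rho_s\partial_x u\|^2$; the two differ by an extra $n$ growth per mode in each of the two terms, and there is no reason the HUM minimizer achieves the necessary cancellation between $\rho_n'$ and $n\rho_s u_n$. The density argument you sketch at the end would have to establish that $\rho_0\mapsto f_1$ is a bounded map from $L^2$ to $L^2$, but nothing in the HUM construction provides this bound. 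The paper avoids this entirely: by controlling the \emph{full} triple $(\rho_n,u_n,S_n)$ with $f_{1,n}$ in each mode, the consistency $\rho_n'+n\rho_s u_n=f_{1,n}$ is built in and the uniform bound on the Gramian $W_{n,T}$ in \cref{lem-cntrlfin-est} gives $\|f_{1,n}\|_{L^2(0,T)}\le C\|z_{0,n}\|$ directly.

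Finally, your observability inequality $\|(v,\tau)(T)\|^2_{L^2\times L^2}\le C\int_0^T\|\partial_x v\|^2_{L^2}\,dt$ is stated in mismatched norms: the left side is the $L^2\times L^2$ final state, but the right side requires $v(t)\in H^1_0$ almost everywhere, which is not a given for an adjoint trajectory in $L^2\times L^2$, and the lossy step $|v_n|^2\le n^2|v_n|^2$ means you are not really proving final-state observability for the control operator $\rho\mapsto(-b\partial_x\rho,0)^\top$ in the space where controllability is claimed. The paper's finite-dimensional Hautus route simply does not encounter this issue.
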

	\begin{rem}\label{rem_pos}
		The above theorem is also true when there is only one control in the stress equation acting everywhere in the domain, i.e. , for $f_1=0$, $f_2=0$ and $\left( \rho_0, u_0, S_0\right)\in L_m^{2}(0,\pi) \times L^{2}(0,\pi) \times L^{2}(0,\pi)$, then for any $T>0$, there exists a control $f_3\in L^2\left(0,T ; L^2(0,\pi) \right) $ acting everywhere in the stress, such that $(\rho,u, S)$, the corresponding solution to \eqref{eq2}-\eqref{bdd-ini}, belongs
		to $C\left( [0,T];  L_m^{2}(0,\pi) \times L^{2}(0,\pi) \times L^{2}(0,\pi)\right) $ and satisfies
		\begin{equation}
			\rho(T,x)= u(T,x)=S(T,x)=0\text{ for all }x\in (0,\pi).
		\end{equation}
	\end{rem}
	
In view of \cref{thm:lack-1-bis} and \cref{thm:lack-1-bis-velo}, the results in \cref{thm_pos} and \cref{rem_pos} are the best possible controllability results in $(L^2(0,\pi))^3$ using $L^2$-controls. Note that, in \cref{thm_pos} and \cref{rem_pos}, to obtain the controllability results, one of the controls  in the density or stress equation has to be acting everywhere in the domain. The results in \cref{thm:lack-1-bis} and \cref{thm:lack-1-bis-velo} give that any $L^2$-control in the velocity equation even acting everywhere in the domain fails to give null controllability result in $(L^2(0,\pi))^3$. Next question we ask what controllability results for this system can be expected in $(L^2(0,\pi))^3$ using $L^2$-localized controls. It leads us to investigate the approximate controllability of the system and the result is as follows: 
\begin{thm}\label{main_approximate}
		Let $f_2=0$, $f_3=0$ in \eqref{eq2} and $\mathcal{O}_1\subset (0,\pi)$. Let $T >\frac{4\pi}{\sqrt{a\gamma\rho_s^{\gamma-1}+\frac{\mu}{\kappa\rho_s}}} $. The control system \eqref{eq2}-\eqref{bdd-ini} is approximately controllable in $L^{2}(0,\pi) \times L^{2}(0,\pi) \times L_m^{2}(0,\pi)$ at time $T$, by a localized interior
		control $f_1\in L^2\left( 0,T;L^2(\mathcal{O}_1) \right) $ for the density.
	\end{thm}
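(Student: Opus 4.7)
The plan is to reduce the approximate controllability to a unique continuation property for the adjoint system, and then establish that property through a spectral decomposition combined with an Ingham-type inequality adapted to the three-branch spectrum of the generator. First, I would derive the adjoint of \eqref{eq2}--\eqref{bdd-ini} with $f_2=f_3=0$. A standard integration by parts, using $u(t,0)=u(t,\pi)=0$, produces the backward system
\begin{equation*}
-\partial_t\sigma - a\gamma\rho_s^{\gamma-2}\partial_x v = 0, \qquad
-\partial_t v - \rho_s\partial_x\sigma + \tfrac{\mu}{\kappa}\partial_x\tau = 0, \qquad
-\partial_t\tau + \tfrac{1}{\kappa}\tau + \tfrac{1}{\rho_s}\partial_x v = 0,
\end{equation*}
with boundary conditions $v(t,0)=v(t,\pi)=0$. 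Integrating the $\tau$-equation in $x$ and using $v|_{\partial}=0$ shows that $\tau(T,\cdot)\in L_m^2$ is preserved by the adjoint dynamics, so $L^2\times L^2\times L_m^2$ is the natural terminal-data space, and a standard duality argument rewrites \Cref{main_approximate} as: any adjoint solution with terminal data in $L^2\times L^2\times L_m^2$ satisfying $\sigma\equiv 0$ on $(0,T)\times\mathcal{O}_1$ must have $(\sigma(T,\cdot),v(T,\cdot),\tau(T,\cdot))=0$.

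Next, I would exploit the boundary structure to expand $\sigma=\sum_{n\geq 0}\sigma_n(t)\cos(nx)$, $\tau=\sum_{n\geq 0}\tau_n(t)\cos(nx)$, and $v=\sum_{n\geq 1}v_n(t)\sin(nx)$. The adjoint decouples into a trivial $n=0$ mode (eigenvalue $0$, eigenvector $(1,0,0)^{\top}$) and, for each $n\geq 1$, a $3\times 3$ ODE system whose characteristic polynomial is
\begin{equation*}
p_n(\lambda)=\lambda^3-\tfrac{1}{\kappa}\lambda^2+\Bigl(a\gamma\rho_s^{\gamma-1}+\tfrac{\mu}{\kappa\rho_s}\Bigr)n^2\lambda-\tfrac{a\gamma\rho_s^{\gamma-1}}{\kappa}n^2.
\end{equation*}
Setting $\omega=\sqrt{a\gamma\rho_s^{\gamma-1}+\mu/(\kappa\rho_s)}$, a perturbative root-finding yields, for $n$ large, two complex conjugate hyperbolic roots $\lambda_n^{\pm}=\pm i\omega n+\tfrac{\mu}{2\kappa^2\rho_s\omega^2}+O(1/n)$ and one bounded real parabolic root $\lambda_n^0\to\tfrac{a\gamma\rho_s^{\gamma-1}}{\kappa\omega^2}$. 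For the corresponding eigenvectors $\Phi_n^{(j)}=(\phi_n^{(j)}\cos(nx),\psi_n^{(j)}\sin(nx),\chi_n^{(j)}\cos(nx))^{\top}$, I would check the Fattorini--Hautus-type condition $\phi_n^{(j)}\neq 0$: if $\phi=0$ then the first eigen-equation forces $\psi=0$, and the stress equation together with $p_n(1/\kappa)=\mu n^2/(\kappa^2\rho_s)\neq 0$ forces $\chi=0$, ruling out a nontrivial eigenvector with vanishing first component.

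With the Riesz basis $\{\Phi_n^{(j)}\}_{n,j}$ of $L^2\times L^2\times L_m^2$ (a standard perturbative check based on the asymptotic spectral gap), the density component of the adjoint solution reads
$\sigma(t,x)=\sum_{n\geq 0}\cos(nx)\sum_{j}c_n^{(j)}\phi_n^{(j)}e^{\lambda_n^{(j)}(t-T)}$. Fixing $x_0\in\mathcal{O}_1$, the hypothesis $\sigma(\cdot,x_0)\equiv 0$ on $(0,T)$ is the vanishing of a nonharmonic exponential series in $t$ whose imaginary spectrum lies in $\{\pm\omega n:n\geq 1\}$ (up to $O(1)$ perturbations) together with the accumulating real set $\{\lambda_n^0\}$. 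A generalised Ingham inequality in the spirit of \cite{renardy2005viscoelastic}, accounting simultaneously for the two conjugate hyperbolic families and the real branch, applies at time $T>4\pi/\omega$ and forces $c_n^{(j)}\phi_n^{(j)}\cos(nx_0)=0$ for every $(n,j)$; since $\phi_n^{(j)}\neq 0$ and $\cos(n\cdot)$ has only finitely many zeros in the open set $\mathcal{O}_1$, choosing $x_0$ outside that finite exceptional set yields $c_n^{(j)}=0$ for every $n,j$, hence $(\sigma(T),v(T),\tau(T))\equiv 0$. The main obstacle I foresee is this Ingham step: the real branch $\{\lambda_n^0\}$ has no imaginary-part gap and accumulates in the interior of the spectrum, so the classical single-family Ingham lemma does not suffice. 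The standard remedy is either a block/matrix-valued Ingham inequality, or a two-step argument splitting the oscillatory contribution (classical Ingham with gap $\omega$, requiring $T>2\pi/\omega$) from the parabolic one (uniqueness of generalised Dirichlet series on an interval); it is in this combined analysis that the threshold $4\pi/\omega$ of the theorem emerges.
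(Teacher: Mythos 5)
Your overall strategy is the right one: reduce to unique continuation for the adjoint, expand in the Riesz basis of (generalized) eigenfunctions, identify the oscillatory family of conjugate roots $\lambda_n^{2,3}$ and the accumulating real branch $\lambda_n^1 \to \omega_0$, and split the two contributions before invoking Ingham. But the way you close the argument has a genuine gap.

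You assert that a generalised Ingham inequality at time $T>4\pi/\omega$ ``forces $c_n^{(j)}\phi_n^{(j)}\cos(nx_0)=0$ for every $(n,j)$''. An Ingham inequality of any flavour requires a uniform gap among the real parts of the exponents, and it gives an $\ell^2$ observability bound, not a direct vanishing statement; it cannot be applied to the full three-branch family because the parabolic branch $\lambda_n^1$ accumulates at $\omega_0$ and has no gap. You acknowledge the obstacle, but neither of the remedies you name is correctly executed: a block/matrix Ingham still needs a gap, and ``uniqueness of generalised Dirichlet series on an interval'' is not applied to anything in your sketch, since the observation $\sigma\equiv 0$ on $(0,T)\times\mathcal{O}_1$ does not by itself separate $\sigma$ into its hyperbolic and parabolic parts.

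The paper resolves this with a bootstrap you do not reproduce. First, the parabolic part $\sigma_p$ (the $\lambda_n^1$ modes together with finitely many low modes) is shown to extend analytically in $t$ to $(0,\infty)$ directly from the series representation, with all $t$-derivatives uniformly bounded on $(0,T)$ (Lemma 7.3). Next, the hypothesis $\sigma=0$ on $(0,T)\times\mathcal{O}_1$ gives $\sigma_h = -\sigma_p$ there, so $\sigma_h$ restricted to $\mathcal{O}_1$ has the same uniform derivative bounds; the Ingham inequality (with gap $\omega/2$ coming from the $\delta_n$ perturbations, hence the threshold $4\pi/\omega$) is then used not to force vanishing but to promote this local bound to a global one: $\|\partial_t^r\sigma_h(t,\cdot)\|_{L^2(0,\pi)} \le C\,\|\partial_t^r\sigma_h\|_{L^2((0,T)\times\mathcal{O}_1)}$ (Lemma 7.4). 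This yields analyticity of $\sigma_h$ in $t$ and hence an analytic extension to $(0,\infty)$ (Lemma 7.5). Finally, the extended $\tilde\sigma=\tilde\sigma_h+\tilde\sigma_p$ vanishes on $(0,\infty)\times\mathcal{O}_1$; the paper then tests against a smooth $\psi_m$ supported in $\mathcal{O}_1$ with $\langle\cos(m\cdot),\psi_m\rangle\neq 0$, takes the Laplace transform (analytic away from the $\lambda_n^\ell$ with simple poles there), and applies the residue theorem at each $\lambda_m^\ell$ to kill $\alpha_{m,\ell}$. Your pointwise evaluation at a single $x_0$ is both technically delicate (pointwise traces of $L^2$-valued series) and unnecessary; the test-function route sidesteps the zeros of $\cos(n\cdot)$ cleanly. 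So: same decomposition and same awareness of the difficulties, but the mechanism that actually turns Ingham and the parabolic regularity into vanishing coefficients---the analytic-continuation bootstrap followed by Laplace transform and residues---is missing from your proposal.
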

\begin{rem}\label{remappro}
If we choose $\mathcal{O}_1=(0,\pi)$, i.e., if the control acts everywhere in the density equation, then the system \eqref{eq2}-\eqref{bdd-ini} is approximately controllable in $L^{2}(0,\pi) \times L^{2}(0,\pi) \times L_m^{2}(0,\pi)$ at any time $T>0$.
\end{rem}
	
\begin{rem}\label{remappro1} In fact, if the control is used one of the equations, the approximate controllability of \eqref{eq2}-\eqref{bdd-ini} can be obtained. We have the following results:
\begin{enumerate}
\item Let $f_1=0$, $f_3=0$ in \eqref{eq2} and $\mathcal{O}_2\subset (0,\pi)$. Then the control system \eqref{eq2}-\eqref{bdd-ini} is approximately controllable in $L_m^{2}(0,\pi) \times L^{2}(0,\pi) \times L_m^{2}(0,\pi)$ at time $T$, by a localized interior
		control $f_2\in L^2\left( 0,T;L^2(\mathcal{O}_2) \right) $ for the velocity, if $T >\frac{4\pi}{\sqrt{a\gamma\rho_s^{\gamma-1}+\frac{\mu}{\kappa\rho_s}}} $.
If $\mathcal{O}_2=(0,\pi)$, then the system \eqref{eq2}-\eqref{bdd-ini} is approximately controllable in $L_m^{2}(0,\pi) \times L^{2}(0,\pi) \times L_m^{2}(0,\pi)$ at any time $T>0$.

\vspace{2.mm}

\item Let $f_1=0$, $f_2=0$ in \eqref{eq2} and $\mathcal{O}_3\subset (0,\pi)$. Then the control system \eqref{eq2}-\eqref{bdd-ini} is approximately controllable in $L_m^{2}(0,\pi) \times L^{2}(0,\pi) \times L^{2}(0,\pi)$ at time $T$, by a localized interior control $f_3\in L^2\left( 0,T;L^2(\mathcal{O}_3) \right) $ for the stress, if $T >\frac{4\pi}{\sqrt{a\gamma\rho_s^{\gamma-1}+\frac{\mu}{\kappa\rho_s}}} $.
If $\mathcal{O}_3=(0,\pi)$, then the system \eqref{eq2}-\eqref{bdd-ini} is approximately controllable in $L_m^{2}(0,\pi) \times L^{2}(0,\pi) \times L^{2}(0,\pi)$ at any time $T>0$.
\end{enumerate}
\end{rem}

The proof of the lack of null controllability results (\cref{thm:lack-1-bis} and \cref{thm:lack-1-bis-velo}) is based on duality arguments. It is well-known that the null controllability of a linear system is equivalent to an observability inequality satisfied by the solution of the adjoint problem \cite[Chapter 2]{coron2007control}. The main idea to prove the lack of null controllability results is to construct special solutions of the adjoint problems for which the observability inequality fails. In particular, the highly localized solutions known as `Gaussian beam' serve the purpose. This type of solution is highly concentrated on the characteristic of the PDEs through space-time. Exploiting the fact that the Gaussian beam is concentrated around the characteristic ray and hence the estimate of the observation outside any small neighborhood of the characteristic ray is negligible, the Gaussian beam can be constructed violating the observability inequality in the case when the observation set does not hit all characteristic rays. This is the key idea of the proof of \cref{thm:lack-1-bis} and \cref{thm:lack-1-bis-velo}. The construction of such solutions for the strictly hyperbolic PDEs has been studied in \cite{Ralston}. For the wave equation, the construction of such solutions has been discussed in \cite{MZ02} to study the lack of null controllability of the equation. For the coupled transport-parabolic system, in \cite{ahamed2022lack}, the construction of Gaussian beams and its application to prove the lack of null controllability of the coupled system have been studied. In this paper, we adapt the construction of Gaussian beam given in \cite{ahamed2022lack} to our system \eqref{eq2} coupling of three ODEs (See \cref{secGB}).

The proof of the rest of the results of this article is accomplished using the spectral analysis of $\mathcal{A}$, the linear operator associated to the system \eqref{eq2}-\eqref{bdd-ini}. The spectrum of the linear operator $\mathcal{A}$ consists of a sequence of eigenvalues converging to a real number $\omega_0<0$ and a pair of complex eigenvalues whose real part is converging to a finite number and imaginary part is behaving as $n$ for $|n|\rightarrow \infty$ (see, \cref{secspec}). The presence of this pair of complex eigenvalues with converging real part and diverging imaginary part indicates the hyperbolic behavior of the system. Hence the geometric control condition is expected to appear in studying the controllability of the system using localized control (see, \cref{main_approximate}). However, the presence of the accumulation point in the spectrum of $\mathcal{A}$ may appear as a constraint to have the exact controllability of the system as obtained in \cref{thm:lack-1-bis}. Moreover, the eigenfunctions (or the generalized eigenfunctions) of $\mathcal{A}$ form a Riesz basis on $(L^2(0,\pi))^3$. A similar result holds for the adjoint operator (See, \cref{secRiesz}). Using this, the series representation of the solution of the \eqref{eq2}-\eqref{bdd-ini} and its adjoint problem can be obtained and they are used explicitly to prove \cref{thm_pos} and \cref{main_approximate}. 

The proof of the null controllability result (\cref{thm_pos}) is obtained by a direct method constructing the control explicitly to bring the solution of the system at rest at any finite given time $T$. Using the fact that the eigenfunctions (or the generalized eigenfunctions) of $\mathcal{A}$ form a Riesz Basis, the system \eqref{eq2}-\eqref{bdd-ini} with control acting everywhere in the domain can be projected onto each finite dimensional eigenspaces for each $n\in \mathbb{N}\cup\{0\}$. For any given time $T>0$, 
the null controllability of this finite dimensional system can be obtained using `Hautus lemma'. Moreover, the construction of the control using the finite dimensional controllability operator can be accomplished along with a uniform estimate independent of $n$ (see, \cref{secnull}). Next, summing up these finite dimensional controls obtained for each projected system, we construct a control for the whole system and we show that the control brings the solution of the whole system to rest at time $T>0$. This technique closely follows the technique used in \cite{chowdhury2012controllability} with a suitable modification to our case when the control acts in the density equation instead of the velocity equation as in \cite{chowdhury2012controllability}.

The proof of the approximate controllability result (\cref{main_approximate}) relies on an unique continuation property of the solutions of the adjoint problem (\cite[Chapter 2]{coron2007control}). Using the representation formula of the solution of the adjoint problem, to prove the unique continuation result, it is needed to show the existence of the unique analytic continuation of a certain exponential series. Then using Laplace transform and Cauchy's integral formula, the require result can be concluded. To prove the existence of the unique analytic continuation of the exponential series, the series is splitted into high and low frequency part corresponding to the complex eigenvalues and then an Ingham inequality is used. For the sake of completeness, the proof of the adaptation of the Ingham inequality in the case of complex modes with converging real part is given in the appendix (See, \cref{secappen}). The main idea of this technique is similar as done in \cite{renardy2005viscoelastic} after a slight modification to our case when the density control is considered instead of the velocity control as in \cite{renardy2005viscoelastic}. Also we note that the system considered in \cite[Theorem 5]{renardy2005viscoelastic} is coupled between ODEs with positive coefficients, whereas, in \eqref{eq2}, the coefficient of the lower order term in the density equation is zero and the imaginary part of the eigenvalues of the linear operator is slightly different from that of associated to \cite[Theorem 5]{renardy2005viscoelastic}.

The novelty of our work in this article is that we thoroughly study the controllability aspects of the compressible Navier-Stokes system with Maxwell's law linearized around a zero velocity in $(L^2(0,\pi))^3$ with Dirichlet boundary condition using $L^2$-controls. We give an explicit construction of the Gaussian beam for this coupled system violating the observability inequality. \cref{thm:lack-1-bis} and \cref{thm:lack-1-bis-velo} conclude that the system cannot be null controllable at any time $T>0$ using $L^2$ localized controls in density and stress equations and $L^2$-control in velocity equation even acting everywhere in the domain. Next, using $L^2$-everywhere control in density or stress equation, the null controllability of the system in $(L^2(0,\pi))^3$ at any time $T>0$ is obtained. It turns out that this is the best possible result on the null controllability of the coupled system in $(L^2(0,\pi))^3$. However, we show that the system is approximately controllable in $(L^2(0,\pi))^3$ at sufficiently large time $T$ using a localized $L^2$-control acting at least one of the equations. We also note that, taking $\kappa=0$, all results obtained here can be recovered for compressible Navier-Stokes equations linearzied around $(\rho_s, 0)$. To the best of our knowledge, there is no controllability result available for the compressible Navier-Stokes system with Maxwell's law. Using this result, the null controllability of the nonlinear system around $(\rho_s, 0, 0)$ using everywhere control only in density or stress equation can be studied. Furthermore, as obtained in the spectral analysis of linear operator associated to \eqref{eq2}-\eqref{bdd-ini}, we anticipate that using $L^2$-localized controls, the nonlinear system can be stabilizable with any exponential decay $-\omega$, where $\omega\in (0,\omega_1)$ and this $\omega_1$ can be determined by the accumulation point appearing in the spectrum of the operator. We plan to investigate such questions in our future work.

	The plan of the paper is as follows. In Section \ref{well-posedness}, we study the well-posedness of the coupled
	system using semigroup theory, and we determine the adjoint of the linear operator associated to the coupled system. Section \ref{Lack of null controllability}
	is devoted to the constructions of Gaussian beam associated to the coupled system \eqref{eq2} and the proof of \cref{thm:lack-1-bis} and \cref{thm:lack-1-bis-velo}. 
	In Section \ref{secspec}, the spectral analysis for the linear operator associated to the coupled system is given. In \cref{secRiesz} it is shown that the eigenfunctions of the linear operator form a Riesz basis in $(L^2(0, \pi))^3$. In \cref{secproj}, the projection of the operators onto eigenspaces is discussed. \cref{secnull} and \cref{secapprox} are devoted to the proof of \cref{thm_pos} and \cref{main_approximate}, respectively. In \cref{secmultiev}, the main modifications of the proofs of theorems when the linear operator has multiple eigenvalues are indicated. In \cref{secappen}, we add the proof of the Ingham inequality adapted to our case.

	

	\section{ Linearized operator}\label{well-posedness}

	Let us define $\mathcal Z=L^2(0, \pi)\times L^2(0, \pi)\times L^2(0, \pi)$ and the positive constant
	\begin{equation}\label{equ-constant}
		b=a\gamma{\rho_s}^{\gamma-2}.
	\end{equation} 
	Let $\mathcal{Z}$ be endowed with the inner product
	\begin{equation}\label{innerproduct}
		\left\langle \begin{pmatrix}
			\rho\\ u\\S
		\end{pmatrix},\begin{pmatrix}
			\sigma\\ v\\\tilde{S}
		\end{pmatrix} \right\rangle_{\mathcal Z}=b\int_{0}^{\pi}\rho\bar{\sigma} dx+\rho_s\int_{0}^{\pi}u\bar{v} dx+\frac{\kappa}{\mu}\int_{0}^{\pi}S\bar{\tilde{S}} dx,
	\end{equation}
where $\bar{z}$ denotes the complex conjugate of $z$. 

We now define the unbounded operator $\left( \mathcal A, \mathcal D(\mathcal A ; \mathcal{Z})\right) $ in $\mathcal Z$ by$$\mathcal D(\mathcal A ; \mathcal{Z})=\left\lbrace \begin{pmatrix}
		\rho\\u\\S
	\end{pmatrix}\in \mathcal Z\: :\:u\in H_0^1(0,\pi), \left( -b\rho+\frac{1}{\rho_s}S \right)\in H^1(0,\pi) \right\rbrace  $$and
	\begin{equation}\label{def-A}
		\mathcal A=\begin{bmatrix}
			0\hspace{1mm}&-{\rho_s}\frac{d}{dx}\hspace{1mm}&0\vspace{2mm}\\-b\frac{d}{dx}&0&\frac{1}{\rho_s}\frac{d}{dx}\vspace{2mm}\\0&\frac{\mu}{\kappa}\frac{d}{dx}&-\frac{1}{\kappa}
		\end{bmatrix}.
	\end{equation}
	We introduce the input space $\mathcal{U} = \mathcal{Z}$ and the control operator $\mathcal{B} \in \mathcal{L}(\mathcal{U};\mathcal{Z})$ defined by 
	\begin{equation}\label{eqcontr}
		\mathcal{B} F  = \left( \mathbbm{1_{\mathcal{O}_{1}}} f_1, \mathbbm{1_{\mathcal{O}_{2}}} f_2,\mathbbm{1_{\mathcal{O}_{3}}} f_3 \right)^{\top}, \qquad F = (f_{1}, f_{2}, f_3)^{\top} \in \mathcal{U}.
	\end{equation}
	With the above introduced notations, the system  \eqref{eq2}-\eqref{bdd-ini} can be rewritten as 
	\begin{equation} \label{op-eqn}
		\dot{z}(t) = \mathcal{A} z(t) + \mathcal{B} F(t), \quad t\in (0,T), \qquad z(0) = z_{0},
	\end{equation}
	where we have set $z(t) = (\rho(t,\cdot), u(t, \cdot), S(t,\cdot))^{\top},$ $z_{0} =(\rho_{0}, u_{0}, S_0)^{\top}$ and \\
	$F(t) = (f_{1}(t, \cdot), f_{2}(t, \cdot), f_3 (t, \cdot))^{\top}.$ 
	
	The well-posedness of the system \eqref{eq2}-\eqref{bdd-ini} follows from the following proposition.
	\begin{prop} \label{pr:semigroup-z} 
		The operator $(\mathcal{A}, \mathcal{D}(\mathcal{A}; \mathcal{Z}))$ is the infinitesimal generator of a strongly continuous semigroup $\left\lbrace \mathbb{T}_{t} \right\rbrace_{t\geq 0} $ on $\mathcal{Z}.$
		Further, for any $F\in L^2(0,T; \mathcal{Z})$ and for any $z_0\in \mathcal{Z}$, \eqref{op-eqn} admits a unique solution $(\rho, u, S)\in C([0,T]; \mathcal{Z})$ with 
		\begin{equation}
			\|(\rho, u, S)\|_{C([0,T];\mathcal{Z})} \leqslant  C \Big(\|z_0\|_\mathcal{Z}+ \|F\|_{L^2(0,T; \mathcal{Z})}\Big).
		\end{equation}
	\end{prop}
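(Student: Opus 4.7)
The plan is to invoke the Lumer--Phillips theorem for the first assertion, and then get the inhomogeneous problem by the standard variation of parameters formula using the boundedness of $\mathcal{B}$. First, density of $\mathcal{D}(\mathcal{A};\mathcal{Z})$ in $\mathcal{Z}$ is immediate since $C_c^\infty(0,\pi)^3$ is contained in the domain. For dissipativity, I would compute $\mathrm{Re}\,\langle \mathcal{A} z, z\rangle_{\mathcal{Z}}$ for $z=(\rho,u,S)^{\top}\in\mathcal{D}(\mathcal{A};\mathcal{Z})$ with the weighted inner product \eqref{innerproduct}. The key observation is that while $\rho$ and $S$ need not individually lie in $H^1(0,\pi)$, the combination $q:=-b\rho+\tfrac{1}{\rho_s}S$ does, so that $(\mathcal{A} z)_2 = q_x$ makes sense in $L^2$. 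Integrating by parts against $u$ (using $u(0)=u(\pi)=0$), the cross terms arising from the skew-symmetric part of $\mathcal{A}$ regroup into conjugate pairs which cancel upon taking real parts, leaving
\begin{equation*}
\mathrm{Re}\,\langle \mathcal{A} z, z\rangle_{\mathcal{Z}} = -\frac{1}{\mu}\int_0^\pi |S|^2\,\rd x \leq 0.
\end{equation*}
The weights $b$, $\rho_s$, $\kappa/\mu$ in \eqref{innerproduct} are precisely chosen to make this cancellation happen.

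For the range condition, it suffices to exhibit one $\lambda>0$ such that $\lambda I-\mathcal{A}$ is surjective. Given $F=(f_1,f_2,f_3)^{\top}\in\mathcal{Z}$, the resolvent equation can be solved by elimination: the third equation gives $S$ as an affine function of $u_x$, the first gives $\rho$ as an affine function of $u_x$, and substituting into the second reduces the problem to the elliptic Dirichlet problem
\begin{equation*}
\lambda u - c(\lambda)\, u_{xx} = g_F \quad \text{in } (0,\pi),\qquad u(0)=u(\pi)=0,
\end{equation*}
with $c(\lambda)=\frac{b\rho_s}{\lambda}+\frac{\mu/\kappa}{\rho_s(\lambda+1/\kappa)}>0$ and a right-hand side $g_F\in H^{-1}(0,\pi)$ carrying the distributional derivatives of $f_1$ and $f_3$. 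Lax--Milgram provides a unique $u\in H_0^1(0,\pi)$, and one reconstructs $\rho, S\in L^2(0,\pi)$ from the formulas above. The final check is that $q=-b\rho+\tfrac{1}{\rho_s}S\in H^1(0,\pi)$: using the elliptic equation for $u$, the a priori singular contributions $f_{1,x}$ and $f_{3,x}$ combine with $c(\lambda)u_{xx}$ to leave $\lambda u - f_2 \in L^2(0,\pi)$ as the distributional derivative of $q$, so $(\rho,u,S)^{\top}\in\mathcal{D}(\mathcal{A};\mathcal{Z})$.

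By Lumer--Phillips, $\mathcal{A}$ generates a $C_0$-semigroup of contractions $\{\mathbb{T}_t\}_{t\geq 0}$ on $\mathcal{Z}$. Since $\mathcal{B}\in\mathcal{L}(\mathcal{U};\mathcal{Z})$ is bounded, the mild solution $z(t)=\mathbb{T}_t z_0+\int_0^t \mathbb{T}_{t-s}\mathcal{B} F(s)\,\rd s$ belongs to $C([0,T];\mathcal{Z})$ for any $z_0\in\mathcal{Z}$ and $F\in L^2(0,T;\mathcal{Z})$, and the claimed estimate follows at once from the contractivity of $\mathbb{T}_t$, boundedness of $\mathcal{B}$, and the Cauchy--Schwarz inequality applied to the Duhamel integral.

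The main obstacle is the verification at the end of Step~2 that $q\in H^1(0,\pi)$ despite $f_1$ and $f_3$ being only in $L^2$. This is delicate because the formula for $q$ involves $f_1$ and $f_3$ themselves (not their derivatives), yet one needs $q_x\in L^2$; it is the particular algebraic structure of the system, coupling $u_{xx}$ to $f_{1,x}-f_{3,x}$ through the elliptic equation, that rescues this regularity and confirms that the domain definition in $\mathcal{D}(\mathcal{A};\mathcal{Z})$ is the natural one.
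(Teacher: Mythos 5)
Your proof is correct and follows essentially the same route as the paper: maximal dissipativity via the weighted inner product (Lumer--Phillips), elimination of $\rho,S$ to reduce the resolvent equation to a scalar elliptic Dirichlet problem for $u$, and the standard Duhamel/mild-solution argument for the inhomogeneous system with bounded $\mathcal{B}$ (the paper takes $\lambda=1$, you take a general $\lambda>0$, which is inessential). One small remark: the ``delicate'' verification that $q=-b\rho+\tfrac{1}{\rho_s}S\in H^1$ you flag at the end is actually immediate --- the second resolvent equation reads $q'=\lambda u-f_2\in L^2$ directly, without needing to feed the elliptic equation back in; the paper notes exactly this.
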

	\begin{proof} The result follows from \cite[Theorem 3.8.4, Section 3.8, Chapter 3]{TW09} by showing that the operator $(\mathcal{A}, \mathcal{D}(\mathcal{A}; \mathcal{Z}))$ is maximal dissipative in $\mathcal{Z}$. \\
	
		\noindent {\textit{Step 1. Dissipativity:}} For all $\begin{pmatrix}
			\rho\\ u\\S
		\end{pmatrix}\in \mathcal{D}(\mathcal{A}; \mathcal{Z})$, we have
		\begin{align*}
			\left\langle \mathcal{A}\begin{pmatrix}
				\rho\\ u\\S
			\end{pmatrix},\begin{pmatrix}
				\rho\\ u\\S
			\end{pmatrix} \right\rangle_{\mathcal Z}=&\left\langle \begin{pmatrix}
				-{\rho_s}u'\\ (-b\rho+\frac{1}{\rho_s} S)'\\\frac{\mu}{\kappa}u'-\frac{1}{\kappa}S
			\end{pmatrix},\begin{pmatrix}
				\rho\\ u\\S
			\end{pmatrix} \right\rangle_{\mathcal Z}\\
			=&-b\int_{0}^{\pi}{\rho_s}u'\bar{\rho} \;\rd x + \rho_s\int_{0}^{\pi}\left( -b\rho+\frac{1}{\rho_s} S\right) '\bar{u} \;\rd x + \frac{\kappa}{\mu}\int_{0}^{\pi} \left( \frac{\mu}{\kappa}u'-\frac{1}{\kappa}S\right) \bar{S} \;\rd x\\
			=&-b\rho_s\int_0^\pi\Big(u'\bar{\rho}-\bar{u}'\rho\Big)\; \rd x- \int_0^\pi \Big(S\bar{u}'-\bar{S}u'\Big)\; \rd x -\frac{1}{\mu}\int_{0}^{\pi}|S|^2 \; \rd x,
		\end{align*}
	and thus
		\begin{align*}
			\text{Re } \left\langle \mathcal{A}\begin{pmatrix}
				\rho\\ u\\S
			\end{pmatrix},\begin{pmatrix}
				\rho\\ u\\S
			\end{pmatrix} \right\rangle_{\mathcal Z}&= -\frac{1}{\mu}\int_{0}^{\pi}|S|^2 \; \rd x \leq 0 .
		\end{align*}
		Therefore, $(\mathcal{A}, \mathcal{D}(\mathcal{A}; \mathcal{Z}))$ is dissipative in $\mathcal{Z}$.
		
		\noindent {\textit{Step 2. Maximality :}} We will show that $\mathrm{Range}\,(\mathcal{I}-\mathcal{A})=\mathcal{Z}$, i.e., for any $(f,g,h)\in\mathcal{Z}$, there exists a unique $(\rho,u,S)\in \mathcal{D}(\mathcal{A}; \mathcal{Z})$ such that
		\begin{equation}\label{max1}
			\begin{dcases}
				\rho + {\rho_s}u'= f & \mbox{ in } (0,\pi), \\
				u + b\rho' - \frac{1}{\rho_s} S'= g & \mbox{ in } (0,\pi), \\
				S - \frac{\mu}{\kappa}u' + \frac{1}{\kappa} S= h & \mbox{ in } (0,\pi), \\
				u(0)=u(\pi)=0.
			\end{dcases}
		\end{equation}
		After eliminating $\rho', S'$ from $\eqref{max1}_2$, the equation along with $\eqref{max1}_4$ is equivalent to : find $u\in H^1_0(0,\pi)$ such that
		\begin{equation*}
			u- \left( b{\rho_s}+\frac{\mu}{\rho_s(1+\kappa)}\right) u'' = g - b f' + \frac{\kappa}{\rho_s(1+\kappa)} h',
		\end{equation*}
		where the right hand side is in $H^{-1}(0,\pi)$. Since $\left( b{\rho_s}+\frac{\mu}{\rho_s(1+\kappa)}\right)>0$, the above elliptic equation admits a unique solution $u$ in $ H^1_0(0,\pi)$. Now \eqref{max1}$_1$ and \eqref{max1}$_3$ yield that there exit unique $\rho, S \in L^2 (0,\pi)$ such that $(\rho, u, S)$ satisfies \eqref{max1}. From \eqref{max1}$_2$, it follows that $\left( -b\rho+\frac{1}{\rho_s}S \right)\in H^1(0,\pi)$. Thus we get $(\rho,u,S)\in \mathcal{D}(\mathcal{A}; \mathcal{Z})$.
	\end{proof}

By duality argument, it is well-known that the controllability of a pair $(\mathcal{A}, \mathcal{B})$ can be characterized by the observability of the pair 
$(\mathcal{A}^*, \mathcal{B}^*)$, where $\mathcal{A}^*$ and $\mathcal{B}^*$ are the adjoint operators of $\mathcal{A}$ and $\mathcal{B}$, respectively. Hence, it is important to determine the adjoint operator $\mathcal{A}^*$:
	
	\begin{prop} \label{adj-op}
		The adjoint of  $(\mathcal{A}, \mathcal{D}(\mathcal{A} ; \mathcal{Z}))$ in $\mathcal{Z}$ is defined by 
		\begin{equation} \label{dom-A*}
			\mathcal{D}(\mathcal{A}^*;\mathcal{Z}) = \left\lbrace \begin{pmatrix}
				\sigma\\v\\\tilde{S}
			\end{pmatrix}\in \mathcal Z\: :\:v\in H_0^1(0,\pi), \left( b\sigma - \frac{1}{\rho_s}  \tilde S \right)\in H^1(0,\pi) \right\rbrace,
		\end{equation}
		and 
		\begin{equation} \label{op-A*}
			\mathcal{A}^* =  \begin{bmatrix}
				0\hspace{1mm}&{\rho_s}\frac{d}{dx}\hspace{1mm}&0\vspace{2mm}\\b\frac{d}{dx}&0&-\frac{1}{\rho_s}\frac{d}{dx}\vspace{2mm}\\0&-\frac{\mu}{\kappa}\frac{d}{dx}&-\frac{1}{\kappa}
			\end{bmatrix}.
		\end{equation}
		Moreover $(\mathcal{A}^*, \mathcal{D}(\mathcal{A}^*; \mathcal{Z}))$ is the infinitesimal generator of a strongly continuous semigroup $\left\lbrace \mathbb{T}^*_{t} \right\rbrace_{t\geq 0} $ on $\mathcal{Z}.$
	\end{prop}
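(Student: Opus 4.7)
The plan is to verify the formulas for $\mathcal{A}^*$ and $\mathcal{D}(\mathcal{A}^*;\mathcal{Z})$ by direct computation of the sesquilinear form $\langle \mathcal{A} z, w\rangle_{\mathcal{Z}}$ for $z = (\rho,u,S)^\top \in \mathcal{D}(\mathcal A;\mathcal Z)$ and $w = (\sigma,v,\tilde S)^\top$ in the proposed domain, and then to invoke classical Hilbert-space semigroup theory for the generation statement.

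Using \eqref{innerproduct} and \eqref{def-A}, the pairing expands as
\begin{equation*}
\langle \mathcal A z, w\rangle_\mathcal{Z} = -b\rho_s\!\int_0^\pi u'\bar\sigma\,\rd x + \rho_s\!\int_0^\pi \Bigl(-b\rho+\tfrac{1}{\rho_s}S\Bigr)'\bar v\,\rd x + \int_0^\pi u'\bar{\tilde S}\,\rd x - \tfrac{1}{\mu}\!\int_0^\pi S\bar{\tilde S}\,\rd x.
\end{equation*}
The first and third terms group as $-\rho_s\int_0^\pi u'\,\overline{(b\sigma - \tfrac{1}{\rho_s}\tilde S)}\,\rd x$. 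Assuming $b\sigma - \tfrac{1}{\rho_s}\tilde S \in H^1(0,\pi)$ and using $u \in H^1_0(0,\pi)$, integration by parts (with vanishing boundary contribution) converts this to $\rho_s\int_0^\pi u\,\overline{(b\sigma - \tfrac{1}{\rho_s}\tilde S)'}\,\rd x$, which matches the second-component pairing demanded by \eqref{op-A*}. For the middle term, $-b\rho + \tfrac{1}{\rho_s}S \in H^1$ by the definition of $\mathcal D(\mathcal A;\mathcal Z)$ and $v \in H^1_0$ by assumption, so another integration by parts moves the derivative onto $v$, producing $b\rho_s\int \rho\,\bar v'\,\rd x - \int S\,\bar v'\,\rd x$, which matches the first component and the $v'$-part of the third component of $\mathcal A^* w$. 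Combined with the last term, this proves $\langle \mathcal A z, w\rangle_\mathcal{Z} = \langle z, \mathcal A^* w\rangle_\mathcal{Z}$ and hence the inclusion of the right-hand side of \eqref{dom-A*} in $\mathcal D(\mathcal A^*;\mathcal Z)$ with $\mathcal A^*$ acting by \eqref{op-A*}.

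For the converse inclusion, judicious test functions extract each regularity and boundary condition in turn: with $z = (0,u,0)^\top$, $u \in C_c^\infty(0,\pi)$, the pairing reduces to $-\rho_s\int u'\,\overline{(b\sigma - \tfrac{1}{\rho_s}\tilde S)}\,\rd x$, whose boundedness in $u \in L^2$ forces $b\sigma - \tfrac{1}{\rho_s}\tilde S \in H^1$; with $z = (\rho,0,0)^\top$, $\rho \in C_c^\infty$, one obtains $v \in H^1$; and taking $z = (\rho,0,S)^\top$ with $-b\rho + \tfrac{1}{\rho_s}S$ of arbitrary boundary values, the appearance of the non-cancelling boundary term $\rho_s\bigl[(-b\rho + \tfrac{1}{\rho_s}S)\bar v\bigr]_0^\pi$ in the integration by parts forces $v(0)=v(\pi)=0$. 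The key bookkeeping point — and the main subtlety of the argument — is that since neither $\sigma$ nor $\tilde S$ need individually lie in $H^1$, every integration by parts involving these components must be performed on the combination $b\sigma - \tfrac{1}{\rho_s}\tilde S$, so the $u'$-terms have to be paired before integrating. Finally, since $\mathcal A$ generates a $C_0$-semigroup on the Hilbert space $\mathcal Z$ by \cref{pr:semigroup-z}, the classical theorem on adjoints of $C_0$-semigroup generators on Hilbert spaces yields that $\mathcal A^*$ generates the adjoint semigroup $\{\mathbb T_t^*\}_{t\ge 0}$ on $\mathcal Z$.
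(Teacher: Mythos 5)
Your proof is correct and follows the natural (and essentially unique) route: a direct integration-by-parts computation of $\langle \mathcal{A}z,w\rangle_{\mathcal{Z}}$, grouping the $u'$ contributions from the first and third slots before integrating by parts because only the combination $b\sigma-\tfrac{1}{\rho_s}\tilde S$, not $\sigma$ or $\tilde S$ separately, needs $H^1$ regularity. The paper states \cref{adj-op} without proof, evidently regarding it as routine bookkeeping of the same kind carried out for maximal dissipativity in \cref{pr:semigroup-z}; what you wrote is exactly the omitted computation, and the converse-inclusion test-function argument plus the appeal to the standard theorem that the Hilbert adjoint of a $C_0$-semigroup generator generates the adjoint semigroup (e.g.\ \cite[Proposition 2.8.1]{TW09}) completes it. The one remark worth highlighting — which you already make — is that the domain condition deliberately asks for $b\sigma-\tfrac{1}{\rho_s}\tilde S\in H^1$ rather than $\sigma,\tilde S\in H^1$ individually, mirroring $\mathcal{D}(\mathcal{A};\mathcal{Z})$, and the test-function step with $z=(0,u,0)$ shows this is the sharp condition.
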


	We study the well-posedness of the adjoint system with non-homogenous source terms and boundary data for future purpose. More precisely, we consider the following non-homogeneous system 
	\begin{equation} \label{adj-1}
		\begin{dcases}
			\partial_t\sigma-{\rho_s}\partial_xv= \zeta_{1} & \mbox{ in } (0,T) \times (0, \pi), \\
			\partial_tv-b\partial_x\sigma+\frac{1}{\rho_s} \partial_x\tilde{S}= \zeta_{2} &  \mbox{ in } (0,T) \times (0, \pi), \\
			\partial_t\tilde S+\frac{1}{\kappa}\tilde S+\frac{\mu}{\kappa} \partial_xv= \zeta_{3} &  \mbox{ in } (0,T) \times (0, \pi), \\
			v(t,0) = h_{0}(t), \quad   v(t, \pi) = h_{\pi}(t), &   \mbox{ in } (0,T), \\
			\sigma(0, x)  = \sigma^{0}(x), \qquad v(0, x) = v^{0}(x), \qquad \tilde{S}(0, x) = \tilde{S}^{0}(x), &  \mbox{ in } (0, \pi).
		\end{dcases}
	\end{equation}
	From the well-posedness of the adjoint operator $\mathcal{A}^{*},$ we get the following result. 
	\begin{prop} \label{adj-source}
		Let $T > 0.$ Then for any $(\sigma^{0}, v^{0}, \tilde{S}^{0} ) \in \mathcal{Z},$ $(\zeta_{1}, \zeta_{2}, \zeta_{3}) \in \Big(L^{2}(0,T;L^{2}(0,\pi))\Big)^{3}$ and $(h_{0}, h_{\pi}) \in \Big(H^{1}(0,T)\Big)^{2},$ the system \eqref{adj-1}
		admits a unique solution $(\sigma, v, \tilde{S}) \in C([0,T];\mathcal{Z})$  satisfying 
		\begin{multline*}
			\|(\sigma, v, \tilde{S})\|_{C([0,T];\mathcal{Z})}   \leqslant C \Big( \norm{(\sigma^{0}, v^{0}, \tilde{S}^{0})}_{\mathcal{Z}} + \norm{(\zeta_{1}, \zeta_{2}, \zeta_{3})}_{(L^{2}(0,T;L^{2}(0,\pi)))^{3}} + \norm{(h_{0}, h_{\pi})}_{(H^{1}(0,T))^2} \Big),
		\end{multline*}
		where the positive constant $C$ depends only on $T,\pi$ and the coefficients of the system.
	\end{prop}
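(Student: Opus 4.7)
The plan is to reduce the non-homogeneous boundary-value problem \eqref{adj-1} to a problem with homogeneous Dirichlet boundary data on $v$, to which the semigroup generation result of Proposition \ref{adj-op} directly applies. Since only the velocity component carries boundary data, a simple one-dimensional lift is enough.

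First, I would introduce the lift
\begin{equation*}
V(t,x) = \left(1-\frac{x}{\pi}\right) h_{0}(t) + \frac{x}{\pi}\, h_{\pi}(t), \qquad (t,x)\in (0,T)\times (0,\pi),
\end{equation*}
which satisfies $V(t,0)=h_{0}(t)$, $V(t,\pi)=h_{\pi}(t)$ and, because $(h_{0},h_{\pi})\in (H^{1}(0,T))^{2}$, belongs to $H^{1}(0,T;C^{\infty}[0,\pi])$ with the norm estimate $\|V\|_{H^{1}(0,T;H^{1}(0,\pi))}\leq C\|(h_{0},h_{\pi})\|_{(H^{1}(0,T))^{2}}$. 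Setting $\widetilde v := v - V$, the triple $(\sigma,\widetilde v,\widetilde S)$ formally satisfies the same system as \eqref{adj-1} but with homogeneous Dirichlet data on $\widetilde v$, initial value $(\sigma^{0},\, v^{0}-V(0,\cdot),\, \widetilde S^{0})\in\mathcal Z$, and modified source terms
\begin{equation*}
\widetilde\zeta_{1}=\zeta_{1}+\rho_{s}\partial_{x}V,\qquad \widetilde\zeta_{2}=\zeta_{2}-\partial_{t}V,\qquad \widetilde\zeta_{3}=\zeta_{3}-\frac{\mu}{\kappa}\partial_{x}V,
\end{equation*}
each of which lies in $L^{2}(0,T;L^{2}(0,\pi))$ by the regularity of $V$.

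Second, I would interpret the reduced problem as the abstract Cauchy problem $\dot{\widetilde z}(t)=\mathcal{A}^{*}\widetilde z(t)+\widetilde\zeta(t)$ on $\mathcal Z$ with initial datum in $\mathcal Z$. By Proposition \ref{adj-op}, $\mathcal{A}^{*}$ generates a $C^{0}$-semigroup $\{\mathbb{T}^{*}_{t}\}_{t\geq 0}$ on $\mathcal Z$, so the standard variation-of-constants formula yields a unique mild solution $(\sigma,\widetilde v,\widetilde S)\in C([0,T];\mathcal Z)$ together with the estimate
\begin{equation*}
\|(\sigma,\widetilde v,\widetilde S)\|_{C([0,T];\mathcal Z)}\leq C\Bigl(\|(\sigma^{0},v^{0}-V(0,\cdot),\widetilde S^{0})\|_{\mathcal Z}+\|\widetilde\zeta\|_{L^{2}(0,T;\mathcal Z)}\Bigr).
\end{equation*}
The Sobolev embedding $H^{1}(0,T)\hookrightarrow C([0,T])$ controls $V(0,\cdot)$ by $\|(h_{0},h_{\pi})\|_{(H^{1}(0,T))^{2}}$, and the bounds on $V$ above control $\widetilde\zeta$ in $L^{2}(0,T;\mathcal Z)$ by the same quantity.

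Finally, setting $v:=\widetilde v + V$ gives a solution of \eqref{adj-1} with $v\in C([0,T];L^{2}(0,\pi))$ after observing that $V\in C([0,T];L^{2}(0,\pi))$ with $\|V\|_{C([0,T];L^{2})}\leq C\|(h_{0},h_{\pi})\|_{(H^{1}(0,T))^{2}}$, and summing the estimates yields the claimed continuity bound. Uniqueness is inherited from uniqueness of the mild solution of the reduced semigroup problem, since any two solutions of \eqref{adj-1} differ by a solution of the homogeneous-boundary, zero-source, zero-initial-data problem. I do not anticipate a real obstacle here: the boundary lift is elementary because the non-homogeneity affects only one scalar component at the endpoints, and the operator-theoretic machinery of Proposition \ref{adj-op} closes the argument; the only bookkeeping point is to ensure each piece of the lift lands in $L^{2}$ in time, which is precisely what the $H^{1}(0,T)$ hypothesis on $(h_{0},h_{\pi})$ provides.
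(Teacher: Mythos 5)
Your proof is correct and is exactly the standard argument the paper alludes to but does not spell out (the paper merely says ``From the well-posedness of the adjoint operator $\mathcal{A}^{*}$, we get the following result''). The linear boundary lift $V$ is the natural choice here because only the velocity component carries boundary data and the problem is one-dimensional: $\partial_x V$ is constant in $x$ and lies in $L^2(0,T;L^2)$ because $h_0,h_\pi\in H^1(0,T)$, $\partial_t V\in L^2(0,T;L^2)$ because $h_0',h_\pi'\in L^2(0,T)$, and $V(0,\cdot)\in L^2(0,\pi)$ by the embedding $H^1(0,T)\hookrightarrow C([0,T])$; the modified sources and shifted initial data then land in the right spaces, and Proposition \ref{adj-op} together with the variation-of-constants formula gives existence, uniqueness, and the estimate for the reduced homogeneous-boundary problem. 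Re-adding $V$ and using $\|V\|_{C([0,T];L^2(0,\pi))}\leqslant C\|(h_0,h_\pi)\|_{(H^1(0,T))^2}$ closes the bound, and uniqueness for \eqref{adj-1} reduces by linearity to uniqueness of mild solutions of the homogeneous semigroup problem, as you say. No gap.
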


As mentioned in the introduction, the controllability results for \eqref{eq2}-\eqref{bdd-ini} is expected in subspaces of $\mathcal{Z}$ satisfying \eqref{stressavg} and \eqref{denseavg}. So, we need to have that the restriction of the operator $\mathcal{A}$ on those subspaces of  $\mathcal{Z}$ also generates a semigroup. 
From \cite[Proposition 2.4.4, Section 2.4, Chapter 2]{TW09}, the following result holds. 
	\begin{prop}
		Let us define the spaces
		\begin{align}
			{\mathcal Z_m}=& L^2(0, \pi)\times L^2(0, \pi)\times L_m^2(0, \pi),\\
			{\mathcal Z_{m,m}}=& L_m^2(0, \pi)\times L^2(0, \pi)\times L_m^2(0, \pi)\label{defn_Z_mm}.
		\end{align}
		where the spaces ${\mathcal Z_m}, {\mathcal Z_{m,m}}$ are equipped with the inner product \eqref{innerproduct}.

Then $\mathcal Z_m$ is invariant under the semigroups $\mathbb{T}$ and $\mathbb{T}^*$. The restriction of $\mathbb{T}$ to $\mathcal Z_m$ is a strongly continuous
semigroup in $\mathcal Z_m$ generated by $(\mathcal{A}, \mathcal{D}(\mathcal{A};\mathcal{Z}_m))$, where $\mathcal{D}(\mathcal{A};\mathcal{Z}_m)=\mathcal{D}(\mathcal{A}, \mathcal{Z})\cap \mathcal{Z}_m$. Also, the restriction of $\mathbb{T}^*$ to $\mathcal Z_m$ is a strongly continuous semigroup in $\mathcal Z_m$  generated by $(\mathcal{A}^*, \mathcal{D}(\mathcal{A}^*;\mathcal{Z}_{m}))$, where $\mathcal{D}(\mathcal{A}^*;\mathcal{Z}_{m})=\mathcal{D}(\mathcal{A}^*, \mathcal{Z})\cap \mathcal{Z}_m$. 

Similarly, $\mathcal{Z}_{m,m}$ is invariant under the semigroups $\mathbb{T}$ and $\mathbb{T}^*$. 
The restriction of $\mathbb{T}$ to $\mathcal Z_{m,m}$ is a strongly continuous
semigroup in $\mathcal Z_{m,m}$ generated by $(\mathcal{A}, \mathcal{D}(\mathcal{A};\mathcal{Z}_{m,m}))$, where $\mathcal{D}(\mathcal{A};\mathcal{Z}_{m,m})=\mathcal{D}(\mathcal{A}, \mathcal{Z})\cap \mathcal{Z}_{m,m}$. Also, the restriction of $\mathbb{T}^*$ to $\mathcal Z_{m,m}$ is a strongly continuous semigroup in $\mathcal Z_{m,m}$  generated by $(\mathcal{A}^*, \mathcal{D}(\mathcal{A}^*;\mathcal{Z}_{m,m}))$, where $\mathcal{D}(\mathcal{A}^*;\mathcal{Z}_{m,m})=\mathcal{D}(\mathcal{A}^*, \mathcal{Z})\cap \mathcal{Z}_{m,m}$. 
	\end{prop}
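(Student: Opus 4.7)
The plan is to check that $\mathcal{Z}_m$ and $\mathcal{Z}_{m,m}$ are closed subspaces of $\mathcal{Z}$ invariant under both semigroups, and then invoke \cite[Proposition 2.4.4]{TW09} which automatically upgrades the restriction to a $C_0$-semigroup whose generator is the obvious restriction of $\mathcal{A}$ (respectively $\mathcal{A}^*$) to the intersection of $\mathcal{D}(\mathcal{A};\mathcal{Z})$ (resp.~$\mathcal{D}(\mathcal{A}^*;\mathcal{Z})$) with the subspace. Closedness is immediate: the functionals $z \mapsto \int_0^\pi \rho\,\rd x$ and $z \mapsto \int_0^\pi S\,\rd x$ are continuous on $\mathcal{Z}$, and both $\mathcal{Z}_m$ and $\mathcal{Z}_{m,m}$ are defined as their common kernels.

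For invariance under $\mathbb{T}$, I would first fix a smooth orbit by taking $z_0 \in \mathcal{D}(\mathcal{A};\mathcal{Z})$, so that $z(t) = \mathbb{T}_t z_0$ is a classical solution of \eqref{eq2}-\eqref{bdd-ini} with $F\equiv 0$. Integrating the first and third equations of \eqref{eq2} in $x$ over $(0,\pi)$ and using the Dirichlet condition $u(t,0) = u(t,\pi) = 0$ eliminates the boundary contributions, leaving
\begin{equation*}
\frac{d}{dt}\int_0^\pi \rho(t,x)\,\rd x = 0, \qquad \frac{d}{dt}\int_0^\pi S(t,x)\,\rd x = -\frac{1}{\kappa}\int_0^\pi S(t,x)\,\rd x.
\end{equation*}
Hence the $\rho$-mean is preserved and the $S$-mean decays like $e^{-t/\kappa}$; in particular each vanishes identically in $t$ when it vanishes at $t=0$. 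Since $\mathcal{D}(\mathcal{A};\mathcal{Z})$ is dense in $\mathcal{Z}$ and both the mean-value functionals and $\mathbb{T}_t$ are continuous, the identities extend to arbitrary initial data, so $\mathbb{T}_t$ leaves $\mathcal{Z}_m$ and $\mathcal{Z}_{m,m}$ invariant.

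The same argument applies to $\mathbb{T}^*$: integrating the first and third equations of the adjoint system \eqref{adj-1} (with vanishing source and boundary data) in $x$ and using $v(t,0)=v(t,\pi)=0$ gives the analogous identities
\begin{equation*}
\frac{d}{dt}\int_0^\pi \sigma(t,x)\,\rd x = 0, \qquad \frac{d}{dt}\int_0^\pi \tilde{S}(t,x)\,\rd x = -\frac{1}{\kappa}\int_0^\pi \tilde{S}(t,x)\,\rd x,
\end{equation*}
so the mean-free constraints are again preserved, first on the dense subset $\mathcal{D}(\mathcal{A}^*;\mathcal{Z})$ and then everywhere by continuity. Applying \cite[Proposition 2.4.4]{TW09} in each of the four cases then yields the four strongly continuous restricted semigroups with the stated generators. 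The only mildly delicate step is the density extension in the two invariance arguments, and it is routine because the functionals involved are continuous on $\mathcal{Z}$; there is no genuine obstacle.
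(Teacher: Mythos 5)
Your proof is correct and takes essentially the same route the paper has in mind: the paper simply cites \cite[Proposition 2.4.4]{TW09} and omits the invariance check, whereas you supply it by integrating the density and stress equations in $x$, using the Dirichlet condition on $u$ (resp.~$v$) to kill the boundary terms, and then extending from the dense core $\mathcal{D}(\mathcal{A};\mathcal{Z})$ by continuity of the mean-value functionals. One small remark worth adding for completeness: Proposition~2.4.4 of Tucsnak--Weiss identifies the generator of the restricted semigroup as the part of $\mathcal{A}$ in the subspace, which a priori is $\{z \in \mathcal{D}(\mathcal{A};\mathcal{Z}) \cap \mathcal{Z}_m : \mathcal{A}z \in \mathcal{Z}_m\}$; this equals $\mathcal{D}(\mathcal{A};\mathcal{Z}) \cap \mathcal{Z}_m$ here because closed invariance of $\mathcal{Z}_m$ forces $\mathcal{A}z = \lim_{h\to 0}h^{-1}(\mathbb{T}_h z - z)$ to lie in $\mathcal{Z}_m$ for any $z \in \mathcal{D}(\mathcal{A};\mathcal{Z}) \cap \mathcal{Z}_m$ (alternatively one can check directly from the formula for $\mathcal{A}$ using $u \in H_0^1$ and $\int_0^\pi S\,\rd x = 0$). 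With that observation, the argument is complete.
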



	\section{ Lack of null controllability}\label{Lack of null controllability}
	In this section we prove \cref{thm:lack-1-bis} and \cref{thm:lack-1-bis-velo}. We know that the null controllability of a pair $(\mathcal{A}, \mathcal{B})$ is equivalent to the final-state observability of the pair $(\mathcal{A}^{*}, \mathcal{B}^{*}).$  We recall the final state observability of  $(\mathcal{A}^{*}, \mathcal{B}^{*}):$
	\begin{defn}
		The pair $(\mathcal{A}^{*}, \mathcal{B}^{*})$ is final-state observable in the Hilbert space $\mathcal{Z}=(L^2(0,\pi))^3$ at time $T$ if there exists a positive constant $C_{T} > 0$ such that 
		\begin{equation*}
			\int_{0}^{T} \norm{\mathcal{B}^{*} \mathbb{T}_{t}^{*} z}_{\mathcal{U}}^{2} \; \rd t \geqslant C_{T} \norm{\mathbb{T}_{T}^{*}z}^{2}_{\mathcal{Z}},\quad \forall\,  z \in \mathcal{D}(\mathcal{A}^{*}),
		\end{equation*}
		where $\mathbb{T}^*$ is the $C^0$-semigroup generated by $(\mathcal{A}^{*}, \mathcal{D}(\mathcal{A}^{*}; \mathcal{Z}))$ in $\mathcal{Z}$. 
	\end{defn}
	For $(\mathcal{A}^*, \mathcal{D}(\mathcal{A}^*; \mathcal{Z}))$ defined in \eqref{dom-A*}-\eqref{op-A*} and $(\sigma^{0}, v^{0}, \tilde{S}^0 ) \in \mathcal{Z},$ we set 
	\begin{equation*}
		(\sigma(t), v(t), \tilde{S}(t)) = \mathbb{T}_{t}^{*} (\sigma^{0}, v^{0}, \tilde{S}^0) \qquad (t \geqslant 0),
	\end{equation*}
	where $\mathbb{T}^*$ is the $C^{0}$-semigroup generated by $(\mathcal{A}^*, \mathcal{D}(\mathcal{A}^*; \mathcal{Z}))$ on $\mathcal{Z}$. 
	In view of \cref{adj-op}, $(\sigma, v, \tilde{S})$ belongs to $C([0,T];\mathcal{Z})$ and satisfies:
	\begin{equation}\label{eqadj}
		\begin{dcases}
			\partial_t\sigma -{\rho_s}\pa_x v =0 & \mbox{ in } (0,T) \times (0, \pi), \\
			\partial_t v-ab \partial_{x} \sigma + \frac{1}{\rho_s} \partial_x\tilde{S}= 0 &  \mbox{ in } (0,T) \times (0, \pi),\\
			\partial_t\tilde{S}+\frac{1}{\kappa}\tilde{S} + \frac{\mu}{\kappa} \partial_x v= 0 &  \mbox{ in } (0,T) \times (0, \pi),\\
			v(t,0) = v(t, \pi) = 0 &  \mbox{ in } (0,T), \\
			\sigma(0,x)=\sigma^{0}(x), \quad v(0,x)=v^0(x), \quad \tilde{S}(0,x)=\tilde{S}^0(x) & \mbox{ in } (0,\pi).
		\end{dcases}
	\end{equation}

	Then the null controllability of the system \eqref{eq2}-\eqref{bdd-ini} is equivalent to the following observability inequality: 

\begin{prop}\label{thobs1}
Let $T>0$. 
\begin{enumerate}
\item The system \eqref{eq2}-\eqref{bdd-ini} is null controllable in $\mathcal{Z}$ at time $T>0$ using three controls $f_1$, $f_2$ and $f_3$ in $L^2(0,T;L^2(0,\pi))$ with supports in $\mathcal{O}_1$, $\mathcal{O}_2$ and $\mathcal{O}_3$ respectively, if and only if, for $T>0$, there exists a positive constant $C_T>0$  such that for any $(\sigma^{0}, v^{0}, \tilde{S}^0)\in \mathcal{Z}$, 
		$(\sigma, v, \tilde{S})$, the solution of \eqref{eqadj}, satisfies the following observability inequality:
		\begin{multline*}
			\int_{0}^{\pi} |\sigma(T,x)|^{2} \ \rd x \; + \; \int_{0}^{\pi} |v(T,x)|^{2} \ \rd x + \; \int_{0}^{\pi} |\tilde{S}(T,x)|^{2} \ \rd x \\
			\leqslant C_T \Big( \int_0^T \int_{\mathcal{O}_1} |\sigma(t,x)|^2\, \rd x\,\rd t + \int_0^T\int_{\mathcal{O}_2}|v(t,x)|^2\, \rd x\,\rd t + \int_0^T\int_{\mathcal{O}_3}|\tilde{S}(t,x)|^2\,  \rd x\,\rd t\Big). 
		\end{multline*}

\item Assume further $f_1=0=f_3$. 	
The system \eqref{eq2}-\eqref{bdd-ini} is null controllable in $\mathcal{Z}_{m,m}$ at time $T>0$ using a control $f_2$ in $L^2(0,T;L^2(0,\pi))$ with support in $\mathcal{O}_2$ acting only in the velocity equation, if and only if, for $T>0$, there exists a positive constant $C_T>0$  such that for any $(\sigma^{0}, v^{0}, \tilde{S}^0)\in \mathcal{Z}_{m,m}$, 
		$(\sigma, v, \tilde{S})$, the solution of \eqref{eqadj}, satisfies the following observability inequality:
		\begin{multline*}
			\int_{0}^{\pi} |\sigma(T,x)|^{2} \ \rd x \; + \; \int_{0}^{\pi} |v(T,x)|^{2} \ \rd x + \; \int_{0}^{\pi} |\tilde{S}(T,x)|^{2} \ \rd x 
			\leqslant C_T  \int_0^T\int_{\mathcal{O}_2}|v(t,x)|^2\, \rd x\,\rd t. 
		\end{multline*}
	\end{enumerate}
		
	\end{prop}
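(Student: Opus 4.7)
The plan is a direct application of the standard duality between null controllability of the pair $(\mathcal{A},\mathcal{B})$ and final-state observability of the adjoint pair $(\mathcal{A}^*,\mathcal{B}^*)$, as recorded in \cite[Chapter 2]{coron2007control}, which immediately yields the equivalence in the definition preceding the statement. In view of \cref{pr:semigroup-z} and \cref{adj-op}, both sides of the duality are already well-defined; the task therefore reduces to identifying $\mathcal{B}^*$ explicitly on the relevant state space and rewriting the resulting norm identity as the unweighted observability inequality as stated.

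\emph{Computation of $\mathcal{B}^*$.} For $F=(f_1,f_2,f_3)^\top\in\mathcal{U}=\mathcal{Z}$ and $z=(\sigma,v,\tilde{S})^\top\in\mathcal{Z}$, a direct pairing in the weighted inner product \eqref{innerproduct} gives
\[
\langle \mathcal{B} F,z\rangle_{\mathcal Z}=b\!\int_0^\pi \mathbbm{1}_{\mathcal{O}_1}f_1\bar\sigma\,\rd x+\rho_s\!\int_0^\pi \mathbbm{1}_{\mathcal{O}_2}f_2\bar v\,\rd x+\frac{\kappa}{\mu}\!\int_0^\pi\mathbbm{1}_{\mathcal{O}_3}f_3\overline{\tilde S}\,\rd x.
\]
Comparing with $\langle F,\mathcal{B}^* z\rangle_{\mathcal U}$ written in the same weighted inner product (since $\mathcal U=\mathcal Z$) forces $\mathcal{B}^* z=(\mathbbm{1}_{\mathcal{O}_1}\sigma,\mathbbm{1}_{\mathcal{O}_2}v,\mathbbm{1}_{\mathcal{O}_3}\tilde{S})^\top$. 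Consequently $\int_0^T\|\mathcal{B}^*\mathbb{T}_t^* z_0\|_{\mathcal U}^2\,\rd t$ and $\|\mathbb{T}_T^* z_0\|_{\mathcal Z}^2$ coincide, up to the strictly positive weights $b,\rho_s,\kappa/\mu$, with the right- and left-hand sides of the observability inequality of part (1). Absorbing these constants into $C_T$ yields the stated equivalence.

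\emph{Part (2).} When $f_1=f_3=0$, the control operator collapses to $\mathcal{B}_2 f_2=(0,\mathbbm{1}_{\mathcal{O}_2}f_2,0)^\top$, whose range lies in $\mathcal{Z}_{m,m}$ (its first and third components vanish identically, hence have zero mean). By the invariance proposition at the end of \cref{well-posedness}, $\mathbb{T}$ and $\mathbb{T}^*$ restrict to $C_0$-semigroups on $\mathcal{Z}_{m,m}$, so the same HUM duality applies verbatim inside this invariant subspace. The adjoint computed there is a positive scalar multiple of $\mathbbm{1}_{\mathcal{O}_2}v$, which delivers the single-observation inequality as claimed.

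\emph{Main obstacle.} Nothing in the argument is technically deep; the only points requiring care are that the same weighted inner product is used consistently on both sides of the pairing, so that the positive constants can be cleanly folded into $C_T$, and that, for $z_0\in\mathcal{Z}_{m,m}$, the mild semigroup solution $\mathbb{T}_t^* z_0$ genuinely solves \eqref{eqadj} in the sense of \cref{adj-source} — which is guaranteed by the invariance statement. A routine density argument from the cores $\mathcal{D}(\mathcal{A}^*;\mathcal{Z})$ and $\mathcal{D}(\mathcal{A}^*;\mathcal{Z}_{m,m})$ then extends the resulting observability inequality to all initial data in the corresponding state space.
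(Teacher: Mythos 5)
Your proposal is correct and follows the same standard Hilbert Uniqueness Method / duality route the paper invokes via \cite[Chapter~2]{coron2007control}: the paper states the proposition immediately after the definition of final-state observability and leaves the verification implicit. Your explicit computation of $\mathcal{B}^*$ in the weighted inner product \eqref{innerproduct}, the absorption of the strictly positive weights $b,\rho_s,\kappa/\mu$ into $C_T$, and the observation for part (2) that $\operatorname{Ran}\mathcal{B}_2\subset\mathcal{Z}_{m,m}$ together with the invariance of $\mathcal{Z}_{m,m}$ under $\mathbb{T}$ and $\mathbb{T}^*$ are exactly the steps the paper takes for granted.
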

	
	Thus to prove \cref{thm:lack-1-bis} and \cref{thm:lack-1-bis-velo}, we will construct special solutions to the adjoint problem \eqref{eqadj} such that the observability inequality does not hold. For that in the next subsection we will construct Gaussian beam solutions for the adjoint operator. The construction of the Gaussian beam solutions follows the same technique given in \cite[Theorem 3.1]{ahamed2022lack}. Here we adapt the method of the construction to the case of the three coupled equations for this particular example. 
	
	\subsection{ Gaussian Beam construction}\label{secGB}
	Here we construct Gaussian beam for the following operator :
	\begin{equation}\label{eqn21}
		\mathcal{L}_1 \begin{pmatrix}
			\sigma \\ v\\\tilde{S}
		\end{pmatrix} = 
		\begin{pmatrix}
			\partial_t\sigma-{\rho_s}\partial_xv \\ 
			\partial_tv-b\partial_x\sigma+\frac{1}{\rho_s} \partial_x\tilde{S}\\
			\partial_t\tilde S+\frac{1}{\kappa}\tilde S+\frac{\mu}{\kappa} \partial_xv
		\end{pmatrix}, \quad\text{in }[0,T]\times \mathbb{R}.
	\end{equation}

	We have the following result. 
	
	\begin{thm} \label{thm:GB-L1}
		Let $T>0, x_{0} \in \mathbb{R}$  and $k \in \mathbb{N}.$ Let us set 
		\begin{equation}\label{eqGB01}
			\varphi(x)= \frac{i}{2} (x-x_0)^2+(x-x_0) \quad  ( x\in \R).
		\end{equation}
		Then there exist  $\eta, \vartheta, \Upsilon \in C^{1}([0,T];C^{2}_{b}(\mathbb{R})),$  a positive constant $C$, which may depend on $T$ but independent of $k,$ and a sequence of functions $(\sigma_{k}, v_{k}, \tilde{S}_k)_{k \in \mathbb{N}}$ 
		satisfying 
		\begin{equation} \label{reg-app-sol}
			\sigma_{k} \in C^{1}([0,T];C^{1}_{b}(\mathbb{R})), \quad v_{k} \in C^{1}([0,T];C^{1}_{b}(\mathbb{R})),\quad \tilde S_{k} \in C^{1}([0,T];C^{1}_{b}(\mathbb{R})),
		\end{equation}
		with $\sigma_{k}$, $v_{k}$ and $\tilde{S}_k$ in the form 
		\begin{align}
			& \sigma_k(t,x)= k^{-3/4}\partial_x\Big(e^{ik\varphi(x)}\eta(t,x)\Big),    \qquad \qquad \quad \qquad ( t\in [0,T] , x\in\R ), \label{ansatz-sigma} \\
			& v_k(t,x)= k^{-3/4} e^{i k \varphi(x)}\vartheta(t,x),  \qquad \quad\qquad \qquad ( t\in [0,T], \;  x\in \R)   \label{ansatz-v},\\
			& \tilde{S}_k(t,x)= k^{-3/4}\partial_x\Big(e^{ik\varphi(x)}\Upsilon(t,x)\Big),    \qquad \qquad \quad \qquad ( t\in [0,T], x\in\R) , \label{ansatz-tildeS} 
		\end{align}	
		such that  $\eta(t,x_0), \vartheta(t,x_0), \Upsilon(t,x_0)$ are non-zero for all $t\in [0,T]$ and the following holds:
		\begin{align}
			& \sup_{t\in [0,T]}   \norm{\mathcal{L}_1 \begin{pmatrix}
					\sigma_{k} \\ v_{k}\\\tilde{S}_k
				\end{pmatrix} (t,\cdot)}_{L^2(\mathbb{R})\times L^2(\mathbb{R})\times L^2(\mathbb{R})} \leqslant  Ck^{-1}, \label{sol-app} \\
			& \lim_{k\rightarrow \infty} \int_\R |\sigma_k(t,x)|^2\, dx = \sqrt{\pi}|\eta(t,x_0)|^2> 0,  \qquad \qquad (t \in [0,T]),  \label{est-sigma-R}  \\
			&  \sup_{t\in [0,T]} \, \int_{|x-x_0|> k^{-1/4}} |\sigma_k(t,x)|^2\,  \rd  x \leqslant C e^{-\sqrt{k}/2}, \label{est-concentrate} \\
			&  \sup_{ t \in [0,T]} \int_\R |v_k(t,x)|^2\, \rd x \leqslant Ck^{-2},  \label{est-v-R} \\
			& \lim_{k\rightarrow \infty} \int_\R |\tilde{S}_k(t,x)|^2\, dx = \sqrt{\pi}|\Upsilon(t,x_0)|^2> 0,  \qquad \qquad (t \in [0,T]),  \label{est-tildeS-R}  \\
			&  \sup_{t\in [0,T]} \, \int_{|x-x_0|> k^{-1/4}} |\tilde{S}_k(t,x)|^2\,  \rd  x \leqslant C e^{-\sqrt{k}/2}.\label{est-concentrate-tildeS}
		\end{align}
	\end{thm}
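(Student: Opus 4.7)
The plan is to substitute the ansatz (\ref{ansatz-sigma})--(\ref{ansatz-tildeS}) into $\mathcal{L}_{1}$ and choose the amplitudes $\eta,\vartheta,\Upsilon$ so that the resulting residual has $L^{2}$-norm of order $k^{-1}$ uniformly on $[0,T]$. The decisive algebraic observation is that because $\sigma_{k}$ and $\tilde S_{k}$ are both written as $k^{-3/4}$ times an $x$-derivative of $e^{ik\varphi}\cdot(\text{amplitude})$, while $v_{k}=k^{-3/4}e^{ik\varphi}\vartheta$, the three components of $\mathcal{L}_{1}(\sigma_{k},v_{k},\tilde S_{k})$ collect cleanly as
\begin{gather*}
k^{-3/4}\partial_{x}\!\left[e^{ik\varphi}(\eta_{t}-\rho_{s}\vartheta)\right],\\
k^{-3/4}e^{ik\varphi}\vartheta_{t}\,+\,k^{-3/4}\partial_{x}^{2}\!\left[e^{ik\varphi}\bigl(-b\eta+\tfrac{1}{\rho_{s}}\Upsilon\bigr)\right],\\
k^{-3/4}\partial_{x}\!\left[e^{ik\varphi}\bigl(\Upsilon_{t}+\tfrac{1}{\kappa}\Upsilon+\tfrac{\mu}{\kappa}\vartheta\bigr)\right].
\end{gather*}
I therefore impose the three relations
\begin{equation*}
\eta_{t}=\rho_{s}\vartheta,\qquad \Upsilon=b\rho_{s}\eta,\qquad \Upsilon_{t}+\tfrac{1}{\kappa}\Upsilon+\tfrac{\mu}{\kappa}\vartheta=0,
\end{equation*}
which make the first and third components of $\mathcal{L}_{1}$ vanish identically and reduce the second to the single term $k^{-3/4}e^{ik\varphi}\vartheta_{t}$.

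Next I check that the three relations above are consistent. Differentiating $\Upsilon=b\rho_{s}\eta$ in $t$ and using the first relation gives $\Upsilon_{t}=b\rho_{s}^{2}\vartheta$; plugging this into the third relation yields the algebraic identity $(b\rho_{s}^{2}+\mu/\kappa)\vartheta=-(b\rho_{s}/\kappa)\eta$, so $\vartheta=c\eta$ with $c=-b\rho_{s}/(b\rho_{s}^{2}\kappa+\mu)<0$. The first relation then collapses to the scalar ODE $\eta_{t}=c\rho_{s}\eta$, which I integrate to $\eta(t,x)=e^{c\rho_{s}t}\eta_{0}(x)$. I pick any $\eta_{0}\in C^{2}_{b}(\R)$ with $\eta_{0}(x_{0})\neq 0$ (a smooth cut-off equal to $1$ near $x_{0}$ works) and set $\vartheta=c\eta$, $\Upsilon=b\rho_{s}\eta$. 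These lie in $C^{1}([0,T];C^{2}_{b}(\R))$, are nonzero at $x_{0}$ for every $t\in[0,T]$, and because $|e^{ik\varphi(x)}|^{2}=e^{-k(x-x_{0})^{2}}$ decays Gaussianly and dominates the linear growth of $\varphi'(x)=i(x-x_0)+1$, the regularity claim (\ref{reg-app-sol}) follows directly from the definitions of $\sigma_{k},v_{k},\tilde S_{k}$.

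All remaining estimates then reduce to Laplace-type integrals. The identity $\int f(x) e^{-k(x-x_{0})^{2}}\,dx=f(x_{0})\sqrt{\pi/k}+O(k^{-3/2})$ applied to the dominant contribution $k^{-3/2}\cdot k^{2}|\varphi'(x)|^{2}|\eta(t,x)|^{2}e^{-k(x-x_{0})^{2}}$ in $|\sigma_{k}|^{2}$, and the analogue for $|\tilde S_{k}|^{2}$, gives (\ref{est-sigma-R}) and (\ref{est-tildeS-R}) with the factor $\sqrt{\pi}$ because $\varphi'(x_{0})=1$. Estimate (\ref{est-v-R}) is immediate since $v_{k}$ lacks the $k\varphi'$ amplification, and (\ref{sol-app}) is the same computation applied to the only surviving residual $k^{-3/4}e^{ik\varphi}\vartheta_{t}=k^{-3/4}c^{2}\rho_{s}e^{ik\varphi}\eta$, whose $L^{2}$-norm is easily seen to be $O(k^{-1})$. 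For the concentration tails (\ref{est-concentrate}) and (\ref{est-concentrate-tildeS}), I split $e^{-k(x-x_{0})^{2}}\leq e^{-\sqrt{k}/2}\,e^{-k(x-x_{0})^{2}/2}$ on $|x-x_{0}|>k^{-1/4}$ and absorb the polynomial $\varphi'$ factor against the halved Gaussian, which produces the declared exponential decay in $k$. The main and really only delicate point of the construction is the algebraic cancellation in the second equation: the single ansatz $\Upsilon=b\rho_{s}\eta$ collapses the entire second-order differential operator $\partial_{x}^{2}\!\left[e^{ik\varphi}(-b\eta+\Upsilon/\rho_{s})\right]$ to zero in one stroke, so no sub-principal phase correction to $\varphi$ is required and one can work with the stationary ray $x\equiv x_{0}$ and amplitudes that are simply scalar exponentials in $t$.
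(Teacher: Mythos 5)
Your construction is correct and coincides with the paper's: you impose exactly the same three amplitude relations (the paper calls them $g_1=h_1=l_1=0$), you reach the same transport-free scalar ODE $\eta_t=\omega_0\eta$ with $\omega_0=-b\rho_s^2/(\kappa b\rho_s^2+\mu)$, and the $L^2$ estimates are derived by the same Laplace/Gaussian-concentration computations. The only cosmetic deviation is bookkeeping — you solve for $\vartheta=c\eta$ directly rather than writing $\vartheta=\rho_s^{-1}\eta_t$ and then deriving the ODE — but the resulting choices of $\eta,\vartheta,\Upsilon$ are identical.
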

	\begin{proof}  
		
		\textit{ Step 1. Construction of $(\sigma_{k}, v_{k}, \tilde{S}_k):$}  
		
		Let $\varphi$ be as defined in \eqref{eqGB01} and let 
		for each $k\in \N,$ $(\sigma_{k}, v_{k}, \tilde{S}_k)$ be in the form \eqref{ansatz-sigma}-\eqref{ansatz-tildeS}. Our aim is to choose $\eta, \vartheta$ and $\Upsilon$ suitably so that \eqref{sol-app} - \eqref{est-concentrate-tildeS} holds. Plugging the above expressions of $\sigma_{k}$, $v_{k}$ and $\tilde{S}_k$ in  \eqref{eqn21} and after some standard computations, we obtain 
		\begin{equation} \label{app-L1}
			\mathcal{L}_1 \begin{pmatrix}
				\sigma_{k} \\ v_{k} \\ \tilde{S}_k
			\end{pmatrix} = k^{-3/4}
			\begin{pmatrix}
				\partial_x\Big(e^{ik\phi(x)}g_1\Big)\\  \partial_{xx}\Big(e^{ik\phi(x)}h_{1}\Big)+e^{ik\phi(x)}h_{0} \\ \partial_x\Big(e^{ik \phi(x)} l_{1}\Big)
			\end{pmatrix},
		\end{equation}
		where 
		\begin{flalign*} 
			g_{1} = \partial_t \eta - {\rho_s} \vartheta, \quad l_1=\partial_t \Upsilon+\frac{1}{\kappa}\Upsilon + \frac{\mu}{\kappa}\vartheta,
		\end{flalign*}
		\begin{flalign*} 
			h_{1} = -b \eta + \frac{1}{\rho_s}  \Upsilon,\quad  h_0=\partial_t \vartheta.
		\end{flalign*}

		Since we want $(\sigma_{k}, v_{k}, \tilde{S}_k)$ such that \eqref{sol-app} holds, we choose $\eta, \vartheta$ and $\Upsilon$ such that 
		\begin{equation} \label{GB001}
			g_{1} (t,x) =  h_{1}(t,x) = l_{1}(t,x) = 0  \mbox{ for all } t \in [0,T], x \in \mathbb{R}.
		\end{equation}
		The condition $h_{1} = 0$ implies that $\ds \Upsilon = b\rho_s \eta,$ and $g_{1}=0$ implies that $\ds \vartheta=\frac{1}{\rho_s}\partial_t \eta$. Using these in the expression of $l_{1}$ above, we obtain the following ODE for $\eta:$
		\begin{equation} \label{ODe-a0}
			\left( b\rho_s+\frac{\mu}{\kappa{\rho_s}}\right) \pa_t \eta(t,x) +\frac{b\rho_s}{\kappa} \eta(t,x)=0 \qquad \left(t \in [0,T], \; x \in \mathbb{R}\right).
		\end{equation}
		Let $\zeta \in C^\infty_c(\R)$ with $\zeta(x_0)\neq 0$. We choose 
		\begin{equation} \label{eq:a0}
			\eta(t,x) = e^{\omega_0 t}\zeta(x), \text{ where } \omega_0=\frac{-b{\rho_s}^2}{(b\kappa\rho_s^2+ \mu)}<0, \quad \left(t\in [0,T], x \in \mathbb{R}\right).
		\end{equation}
		With the above choice of $\eta,$ we take 
		\begin{flalign} \label{eq:b0}
			\Upsilon(t,x) = b\rho_s \eta(t,x), \quad \vartheta(t,x) &=\frac{1}{\rho_s}\partial_t \eta(t,x)   \quad \left(t\in [0,T], x \in \mathbb{R}\right).
		\end{flalign}
		It is easy to verify that $\eta, \vartheta, \Upsilon \in C^{2}([0,T];C^{2}_{b}(\mathbb{R}))$,
		so that $(\sigma_{k}, v_{k}, \tilde{S}_k)$ satisfies  \eqref{reg-app-sol}, and $h_{0} \in C([0,T];C_{b}(\mathbb{R}))$.

In the following steps, we indicate the proof of the estimates \eqref{sol-app}-\eqref{est-concentrate-tildeS}.

\textit{Step 2.}	
With the above choices of $\eta, \vartheta, \Upsilon$, \eqref{app-L1} is reduced to 
\begin{equation*}
			\mathcal{L}_1 \begin{pmatrix}
				\sigma_{k} \\ v_{k} \\ \tilde{S}_k
			\end{pmatrix} = k^{-3/4}
			\begin{pmatrix}
				0\\  e^{ik\phi(x)}h_{0} \\ 0
			\end{pmatrix},
		\end{equation*}	
and we obtain \eqref{sol-app} noting that 
$$ \int_\R \Big|k^{-3/4}e^{ik \phi(x)}h_{0}(t,x)\Big|^2 \ \rd x\le k^{-2}\sqrt{\pi}\|h_0\|^2_{L^\infty((0,T)\times \R)}, \quad \forall\, t\in[0,T].$$

The estimate \eqref{est-v-R} can be obtained similarly as the above estimate.

\textit{Step 3.}
Now to prove \eqref{est-concentrate}, using the expression of $\sigma_k, \, \phi, \, \phi'$, we note that for a generic positive constant $C$, independent of $k$ and $t$, 
$$ 
\begin{array}{l}
\ds
\int_{|x-x_0|> k^{-1/4}} |\sigma_k(t,x)|^2\,  \rd  x \leqslant C \Big(\int_{|x-x_0|>k^{-1/4}}k^{1/2}e^{-k(x-x_0)^2} [(x-x_0)^2+1]|\eta(t,x)|^2\, \rd x  \\
\ds \hspace{3cm} + \int_{|x-x_0|>k^{-1/4}}k^{-3/2}e^{-k(x-x_0)^2}|\partial_x \eta(t,x)|^2\, \rd x\Big), \quad \forall\, t\in[0,T],
\end{array}
$$
and then using the change of variable $z=\sqrt{\frac{k}{2}}(x-x_0)$ and $\ds \int_\R e^{-z^2}\, \rd z=\sqrt{\pi}$, $\ds \int_\R z^2e^{-z^2}\, \rd z=\frac{\sqrt{\pi}}{2}$, 
for all $k\in \mathbb{N}$ and $t\in [0,T]$, we have 
$$ 
\begin{array}{l}
\ds
\int_{|x-x_0|>k^{-1/4}}k^{1/2}e^{-k(x-x_0)^2} [(x-x_0)^2+1]|\eta(t,x)|^2\, \rd x \\[2.mm]
\ds 
\leqslant \sqrt{2}\|\eta\|_{L^\infty((0,T)\times \R)}\int_{|z|>\frac{k^{1/4}}{\sqrt{2}}} \Big(\frac{2}{k}z^2+1\Big)e^{-2z^2}\, \rd z\\[2.mm]
\ds
\leqslant C e^{-\frac{\sqrt{k}}{2}}\Big(\int_\R e^{-z^2}\, \rd z + 2\int_\R z^{2}e^{-z^2}\, \rd z\Big) \leq Ce^{-\frac{\sqrt{k}}{2}},
\end{array}
$$
and  similarly, for all $k\in \mathbb{N}$ and $t\in [0,T]$, we have 
$$ 
\begin{array}{l}
\ds
\int_{|x-x_0|>k^{-1/4}}k^{-3/2}e^{-k(x-x_0)^2}|\partial_x \eta(t,x)|^2\, \rd x \\
\ds
\leq \sqrt{2}k^{-2}\|\partial_x \eta\|_{L^\infty((0,T)\times \R)}\int_{|z|>\frac{k^{1/4}}{\sqrt{2}}}e^{-2z^2}\, \rd z\leq Ce^{-\frac{\sqrt{k}}{2}}.
\end{array}
$$		
From above estimates, \eqref{est-concentrate} follows. Similarly \eqref{est-concentrate-tildeS} can be proved. 

\textit{Step 4.}
Now to prove \eqref{est-sigma-R}, noting that 	$\ds k^{\frac{1}{2}}\int_\R e^{-k(x-x_0)^2}\, \rd x=\sqrt{\pi}$, we get 
$$ 
\begin{array}{l}
\ds
\int_\R |\sigma_k(t,x)|^2\, \rd x= k^{\frac{1}{2}}\int_\R e^{-k(x-x_0)^2}\Big|i\phi'(x)\eta(t,x)+\frac{1}{k}\partial_x \eta(t,x)\Big|^2\, \rd x\\
\ds \hspace{2cm} =\sqrt{\pi}|\eta(t,x_0)|^2 + R_k(t), \quad \forall\, t\in [0,T],
\end{array}
$$
 where for all $t\in [0,T]$, $R_k(t)$ is given by 
 $$\ds R_k(t)=k^{\frac{1}{2}}\int_\R e^{-k(x-x_0)^2}\Big(\Big|i\phi'(x)\eta(t,x)+\frac{1}{k}\partial_x \eta(t,x)\Big|^2-|\eta(t,x_0)|^2\Big)\, \rd x.$$
 
 Following the similar way as in Step 3, it can be show that 
 $$ 
\begin{array}{l} 
\ds  |R_k(t)|\le k^{\frac{1}{2}}\int_{|x-x_0|>k^{-1/4}} e^{-k(x-x_0)^2}\Big(\Big|i\phi'(x)\eta(t,x)+\frac{1}{k}\partial_x \eta(t,x)\Big|^2-|\eta(t,x_0)|^2\Big)\, \rd x   \\
 \ds \hspace{3cm}
 + k^{\frac{1}{2}}\int_{|x-x_0|\le k^{-1/4}} e^{-k(x-x_0)^2}\Big(\Big|i\phi'(x)\eta(t,x)+\frac{1}{k}\partial_x \eta(t,x)\Big|^2-|\eta(t,x_0)|^2\Big)\, \rd x \\
\ds \le C e^{-\frac{\sqrt{k}}{{2}}}+k^{\frac{1}{2}}\int_{|x-x_0|\le k^{-1/4}} e^{-k(x-x_0)^2}\Big(\Big|i\phi'(x)\eta(t,x)+\frac{1}{k}\partial_x \eta(t,x)\Big|^2-|\eta(t,x_0)|^2\Big)\, \rd x, \quad \forall\, t\in [0,T].
 \end{array}
$$
Using expressions of $\phi'$ and $\eta$, we can estimate 
$$ 
\begin{array}{l}
\Big|i\phi'(x)\eta(t,x)+\frac{1}{k}\partial_x \eta(t,x)\Big|^2-|\eta(t,x_0)|^2= \Big( -(x-x_0)\eta(t,x)+\frac{1}{k}\partial_x \eta(t,x)\Big)^2+ (\eta^2(t,x)-\eta^2(t,x_0))\\
\le 2\Big((x-x_0)^2\|\eta\|^2_{L^\infty((0,T)\times \R)}+  \frac{1}{k^2}\|\partial_x \eta\|^2_{L^\infty((0,T)\times \R)}+ (x-x_0)\|\eta\|_{L^\infty((0,T)\times \R)}\|\partial_x \eta\|_{L^\infty((0,T)\times \R)},
\end{array}
$$
and thus, we get for all $t\in [0,T]$, and for some generic positive constant $C$ independent of $k$ and $t$, 
$$ |R_k(t)|\le C \Big(e^{-\frac{\sqrt{k}}{{2}}}+k^{-\frac{1}{4}} \int_\R k^{\frac{1}{2}}e^{-k(x-x_0)^2}\, \rd x\Big)\le C \Big(e^{-\frac{\sqrt{k}}{{2}}}+\sqrt{\pi}k^{-\frac{1}{4}}\Big).$$
Hence, \eqref{est-sigma-R} holds. Similarly, \eqref{est-tildeS-R} can be proved.
\end{proof}

As a consequence of the above theorem, we get the following results.
\begin{lem} \label{trace-GB}
Let $x_{0} \in (0,\pi),$ and let  $(\sigma_{k}, v_{k}, \tilde{S}_k)_{k \in \mathbb{N}}$ be constructed as in \cref{thm:GB-L1}. Then for all $k ,$
\begin{gather*}
\norm{v_{k}(\cdot,0)}_{H^{1}(0,T)} \leqslant   Ck^{-3/4}, \qquad \norm{v_{k}(\cdot,\pi)}_{H^{1}(0,T)} \leqslant   C k^{-3/4}, 
\end{gather*}
where $C$ is a positive constant, which may depend on $T,$ but independent of $k.$
\end{lem}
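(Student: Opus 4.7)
The plan is to evaluate $v_k(t,0)$ and $v_k(t,\pi)$ directly using the ansatz \eqref{ansatz-v}, and to use the fact that the complex phase $e^{ik\varphi(x)}$ contributes a unimodular (in fact exponentially small) prefactor at any $x\neq x_0$. Combined with the $k^{-3/4}$ factor already in front of the ansatz and the uniform boundedness of $\vartheta$ and $\partial_t\vartheta$, this will produce the desired $Ck^{-3/4}$ estimate on the $H^1(0,T)$-norm of the trace.

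More precisely, from \eqref{ansatz-v} I would write
\begin{equation*}
v_k(t,0) = k^{-3/4}\, e^{ik\varphi(0)}\,\vartheta(t,0), \qquad v_k(t,\pi) = k^{-3/4}\, e^{ik\varphi(\pi)}\,\vartheta(t,\pi),
\end{equation*}
and observe from \eqref{eqGB01} that $\varphi(0) = -x_0 + \tfrac{i}{2}x_0^2$ and $\varphi(\pi) = (\pi-x_0) + \tfrac{i}{2}(\pi-x_0)^2$. Since $x_0\in(0,\pi)$ and hence $x_0 > 0$ and $\pi-x_0 > 0$, it follows that
\begin{equation*}
|e^{ik\varphi(0)}| = e^{-k x_0^{2}/2} \leq 1, \qquad |e^{ik\varphi(\pi)}| = e^{-k(\pi-x_0)^{2}/2} \leq 1,
\end{equation*}
uniformly in $k$. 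Next, from \eqref{eq:a0}--\eqref{eq:b0} we have $\vartheta(t,x) = \tfrac{\omega_0}{\rho_s}\,e^{\omega_0 t}\,\zeta(x)$ with $\zeta\in C_c^{\infty}(\mathbb{R})$, so $\vartheta$ and $\partial_t\vartheta$ are bounded uniformly on $[0,T]\times\mathbb{R}$ by a constant depending only on $T$, $\omega_0$, $\rho_s$ and $\|\zeta\|_{C^1(\mathbb{R})}$.

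Since the time derivative satisfies $\partial_t v_k(t,0) = k^{-3/4}\,e^{ik\varphi(0)}\,\partial_t\vartheta(t,0)$ and similarly at $x=\pi$, the same bounds imply
\begin{equation*}
|v_k(t,0)|^2 + |\partial_t v_k(t,0)|^2 + |v_k(t,\pi)|^2 + |\partial_t v_k(t,\pi)|^2 \leqslant C\, k^{-3/2}, \qquad t\in[0,T],
\end{equation*}
with $C$ independent of $k$. Integrating over $t\in(0,T)$ and taking square roots yields the two desired inequalities. There is no genuine obstacle in this argument, since the estimate is essentially an immediate consequence of the explicit form of $v_k$ together with the decay of the Gaussian envelope at any point away from $x_0$; in fact the bound could be improved to $C k^{-3/4} e^{-k\,\min(x_0,\pi-x_0)^2/2}$, but the stated $Ck^{-3/4}$ is all that is needed for the subsequent application to violate the observability inequality from Proposition \ref{thobs1}.
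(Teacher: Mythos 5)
Your proof is correct and follows the natural direct computation that the paper leaves implicit (the paper states \cref{trace-GB} without proof as an immediate consequence of \cref{thm:GB-L1}): you evaluate $v_k$ and $\partial_t v_k$ at the boundary using the ansatz \eqref{ansatz-v}, note $|e^{ik\varphi(0)}|=e^{-kx_0^2/2}\le 1$ and $|e^{ik\varphi(\pi)}|=e^{-k(\pi-x_0)^2/2}\le 1$, and use the uniform boundedness of $\vartheta$ and $\partial_t\vartheta$ on $[0,T]\times\mathbb{R}$. Your remark that the bound actually has additional exponential decay in $k$ is also correct, though not needed for the application.
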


	\subsection{ Proof of \cref{thm:lack-1-bis} and \cref{thm:lack-1-bis-velo}}
	Now we are going to prove \cref{thm:lack-1-bis}.
	
	\begin{proof}[Proof of \cref{thm:lack-1-bis}]
		In view of  \cref{thobs1}, it is enough to show that, there exists a sequence of initial conditions $\left(\sigma^{0}_{k}, v^{0}_{k}, \tilde{S}^0_k \right)_{k\in \N}$ in $\mathcal{Z},$ such that, the corresponding solution $(\sigma_k, v_k, \tilde{S}_k)$ to the system \eqref{eqadj} satisfy the following estimates 
		\begin{equation*}
			\lim_{k\rightarrow \infty} \left(\int_0^T \int_{\mathcal{O}_1}|\sigma_k(t,x)|^2 \, \rd x \rd t + \int_0^T \int_{\mathcal{O}_2}|v_k(t,x)|^2 \, \rd x\rd t + \int_0^T \int_{\mathcal{O}_3}|\tilde{S}_k(t,x)|^2 \, \rd x\rd t\right)=0,
		\end{equation*}
		\begin{equation*}
			\lim_{k\rightarrow\infty} \left(\int_{0}^{\pi} |\sigma_k(T,x)|^{2} \ \rd x \; + \; \int_{0}^{\pi} |v_k(T,x)|^{2} \ \rd x + \; \int_{0}^{\pi} |\tilde{S}_k(T,x)|^{2} \ \rd x\right) \geqslant {A},
		\end{equation*}
		for some ${A}>0,$ independent of $k.$
		
		Since $(0,\pi)\backslash \overline{\mathcal{O}_{1}\cup \mathcal{O}_{3}}$ is a nonempty open subset of $(0,\pi)$, choose $x_0 \in (0,\pi)\backslash \overline{\mathcal{O}_{1}\cup \mathcal{O}_{3}}$ and $k_0\in\mathbb{N}$ such that
		\begin{equation}\label{x_0}
			\left\lbrace x : |x-x_0|< k_0^{-1/4}\right\rbrace\subset (0,\pi)\backslash \overline{\mathcal{O}_{1}\cup \mathcal{O}_{3}}.
		\end{equation}
		Let  $\left(\sigma_{k}^{\sharp}, v_{k}^{\sharp}, \tilde{S}_k^{\sharp}\right)_{k \in \mathbb{N}}$ be sequence of functions constructed in \cref{thm:GB-L1}.
		
		For $k\geq k_0$, let us define 
		\begin{equation*}
			h_{0,k}(t) := v^{\sharp}_k(t,0), \quad h_{\pi,k}(t) :=v^{\sharp}_{k}(t,\pi)  \qquad  ( t\in [0,T]),
		\end{equation*}
		and 
		\begin{align*}
			& \begin{pmatrix}
				\zeta_{1,k} \\ \zeta_{2,k} \\ \zeta_{3,k} 
			\end{pmatrix}(t,x) := \mathbbm{1}_{(0,\pi)}\mathcal{L}_1\begin{pmatrix}
				\sigma^{\sharp}_{k} \\  v^{\sharp}_{k} \\ \tilde{S}_k^{\sharp}
			\end{pmatrix} (t,x)  \quad  \left(  t \in [0,T], \ x\in [0,\pi] \right), 
		\end{align*}
		where $\mathcal{L}_1$ is the operator  defind in \eqref{eqn21}.
		
		For $k\geq k_0$, we consider the following system.
		\begin{equation}\label{eqadj-sharp}
			\begin{dcases}
				\partial_t \sigma^{\dagger}_{k} - {\rho_s}\partial_x v^{\dagger}_{k} = \zeta_{1,k} & \mbox{ in } (0,T) \times (0, \pi), \\
				\partial_t v^{\dagger}_{k}-b\partial_{x} \sigma^{\dagger}_{k} + \frac{1}{\rho_s} \partial_{x} \tilde{S}^{\dagger}_{k}= \zeta_{2,k} &  \mbox{ in } (0,T) \times (0, \pi),\\
				\partial_t \tilde{S}^{\dagger}_{k}+ \frac{1}{\kappa}\tilde{S}^{\dagger}_{k}+\frac{\mu}{\kappa} \partial_x v^{\dagger}_{k} = \zeta_{3,k} &  \mbox{ in } (0,T) \times (0, \pi),\\
				v^{\dagger}_{k} (t,0) = h_{0,k}(t), \quad  v^{\dagger}_{k} (t, \pi) = h_{\pi,k}(t) &  \mbox{ in } (0,T), \\
				\sigma^{\dagger}_{k} (0,x)= 0 , \quad v^{\dagger}_{k} (0,x) = 0 \quad  \tilde{S}^{\dagger}_{k}(0,x)= 0 & \mbox{ in } (0,\pi).
			\end{dcases}
		\end{equation} 
		From \cref{adj-source}, \eqref{sol-app} and \cref{trace-GB},   for $k\geq k_0$, the  system \eqref{eqadj-sharp} admits a unique solution $(\sigma^{\dagger}_{k}, v^{\dagger}_{k}, \tilde{S}^{\dagger}_{k}) \in C([0,T];\mathcal{Z})$ together with the estimate 
		\begin{equation} \label{est-dagger}
			\norm{(\sigma^{\dagger}_{k}, v^{\dagger}_{k}, \tilde{S}^{\dagger}_{k})}_{C([0,T];\mathcal{Z})}  \leqslant C k^{-\frac{3}{4}},
		\end{equation}
		where the positive constant $C$ is independent of $k.$ Now, we set 
		\begin{equation}\label{eq-final-construction}
			\sigma_{k} = \sigma^{\sharp}_{k} - \sigma^{\dagger}_{k}, \qquad v_{k} = v^{\sharp}_{k} - v^{\dagger}_{k},\qquad \tilde{S}_{k} = \tilde{S}^{\sharp}_{k} - \tilde{S}^{\dagger}_{k}.
		\end{equation}
		Then $(\sigma_{k}, v_{k}, \tilde{S}_k)$ satisfies the system \eqref{eqadj} with the initial data $\left(\sigma^{0}_{k}, v^{0}_{k}, \tilde{S}_k^{0}\right) = \left(\sigma^{\sharp}_{k}(0), v^{\sharp}_{k}(0), \tilde{S}^{\sharp}_{k}(0) \right).$ 
		From \eqref{x_0}, we get
		\begin{equation*}
			S=\{x\in \R \mid |x-x_0|< k^{-1/4}\}\subset (0,\pi)\text{ for }k\geq k_0,
		\end{equation*}
	and thus
	\begin{align*}
		\int_S |{\sigma}^{\sharp}_k(T,x)|^2\, \rd x = \int_\R |{\sigma}^{\sharp}_k(T,x)|^2\, \rd x  -\int_{|x-x_0|\ge k^{-1/4}}|{\sigma}^{\sharp}_k(T,x)|^2\, \rd x .
	\end{align*}
Now from \eqref{est-sigma-R}, \eqref{est-concentrate} and the above inequality, it follows that 
\begin{equation}\label{est_sigmaksharp}
	 \lim_{k\rightarrow \infty} \int_0^{\pi} |{\sigma^{\sharp}}_k(T,x)|^2\, \rd x \geqslant \lim_{k\rightarrow \infty} \int_S |{\sigma^{\sharp}}_k(T,x)|^2\, \rd x \geqslant  \frac{\sqrt{\pi}|\eta(T,x_0)|^2}{2},
\end{equation}
for the positive constant $\sqrt{\pi}|\eta(T,x_0)|^2$, same as in \eqref{est-sigma-R}, independent of $k$. Now using $\sigma_{k} = \sigma^{\sharp}_{k} - \sigma^{\dagger}_{k},$ \eqref{est_sigmaksharp} and \eqref{est-dagger},  we deduce that 
		\begin{align*}
			\lim_{k\rightarrow\infty} &\left(\int_{0}^{\pi} |\sigma_k(T,x)|^{2} \ \rd x \; + \; \int_{0}^{\pi} |v_k(T,x)|^{2} \ \rd x + \; \int_{0}^{\pi} |\tilde{S}_k(T,x)|^{2} \ \rd x\right) \\
			&\geqslant  \lim_{k\rightarrow\infty} \int_{0}^{\pi} |\sigma_k(T,x)|^{2} \ \rd x \geqslant {A}(T),
		\end{align*}
		for some positive constant ${A}(T)$. 
		Similarly,  from \eqref{est-concentrate}, \eqref{est-v-R}, \eqref{est-concentrate-tildeS} and \eqref{est-dagger}, it follows that 
		$$
		\lim_{k\rightarrow \infty} \Big(\int_0^T \int_{\mathcal{O}_1}|\sigma_k(t,x)|^2 \, \rd x \rd t+ \int_0^T \int_{\mathcal{O}_2}|v_k(t,x)|^2 \, \rd x \rd t + \int_0^T \int_{\mathcal{O}_3}|\tilde{S}_k(t,x)|^2 \, \rd x \rd t\Big)=0.$$
		This completes the proof of the theorem. 
	\end{proof}

\begin{proof}[Proof of \cref{thm:lack-1-bis-velo}] The proof of \cref{thm:lack-1-bis-velo} is similar to that of \cref{thm:lack-1-bis} given above. 
The main difference is that the construction of the sequence of initial conditions $\left(\sigma^{0}_{k}, v^{0}_{k}, \tilde{S}^0_k \right)_{k\in \N}$ has to be from $\mathcal{Z}_{m,m}$. 
This is assured because of the construction \eqref{ansatz-sigma}-\eqref{ansatz-tildeS} and the choice of $\zeta$ in \eqref{eq:a0} with support inside $(0,\pi)$, i.e., 
$\zeta(0)=0=\zeta(\pi)$. Thus, noting that in the proof of  \cref{thm:lack-1-bis}, $\ds \int_0^\pi \sigma_{k}^{\sharp}(0, x)\, \rd x=0$ and $\int_0^\pi \tilde{S}^{\sharp}(0,x)\, \rd x=0$, from \eqref{eq-final-construction}, it follows that 
$$ \int_0^\pi \sigma^{0}_{k}(x)\, \rd x= 0 = \int_0^\pi \tilde{S}^{0}_{k}(x)\, \rd x, \quad \forall\, k\in \mathbb{N}.$$
Hence, the sequence $\left(\sigma^{0}_{k}, v^{0}_{k}, \tilde{S}^0_k \right)_{k\in \N}$ belongs to $\mathcal{Z}_{m,m}$. The rest of the proof follows similarly to that of \cref{thm:lack-1-bis}. 
\end{proof}


	\section{ Spectral analysis of the linearized operator}\label{secspec}
	{The rest of the article depends on the spectral analysis of the linearized operator. In this section, we will discuss the behaviour of the spectrum of $\mathcal{A}$ on $\mathcal{Z}_m$. 	We define a Fourier basis $\left\lbrace \phi_{0,1}, \phi_{0,3},\phi_{n,l},\: l=1,2,3\right\rbrace_{n\geq 1} $ in $\mathcal{Z}$ as follows :
		\begin{align}\label{equ-Phi_n}
			&\phi_{0,1}(x)=\frac{1}{\sqrt{b\pi}}\begin{pmatrix}
				1\\0\\0
			\end{pmatrix},\quad\phi_{0,3}(x)=\sqrt{\frac{\mu}{\kappa\pi}}\begin{pmatrix}
				0\\0\\1
			\end{pmatrix},\quad\phi_{n,1}(x)={\sqrt{\frac{2}{b\pi}}}\begin{pmatrix}
				\cos(nx)\\0\\0
			\end{pmatrix},\notag\\
			&\phi_{n,2}(x)={\sqrt{\frac{2}{\rho_s\pi}}}\begin{pmatrix}
				0\\\sin(nx)\\0
			\end{pmatrix},\quad\phi_{n,3}(x)={\sqrt{\frac{2\mu}{\kappa\pi}}}\begin{pmatrix}
				0\\0\\\cos(nx)
			\end{pmatrix},\quad n\geq 1.
		\end{align}
		
We note that $\mathcal{Z}_m=  \mathrm{span}\, \{\phi_{0,1}\} \oplus_{n\in \mathbb{N}}\, \mathrm{span}\, \{\phi_{n,\ell}\mid \ell=1,2,3\}$, an orthogonal sum of the finite dimensional subspaces.
		
		\begin{prop}\label{spectrumofA}
			The spectrum of the operator $(\mathcal{A}, \mathcal{D}(\mathcal{A}; \mathcal{Z}_m))$ consists of $0$ and three sequences $\lambda_n^1$, $\lambda_n^2$ and $\lambda_n^3$ of eigenvalues. Furthermore we have the following properties:
			\begin{itemize}
				\item[$(a)$] For each $n$, there exists at least one eigenvalue, namely $\lambda_n^1$, is real and lies between $-\frac{1}{\kappa}$ and $0$.
				
				\item[$(b)$] All the eigenvalues of $\mathcal{A}$ have negative real part.
				
			 \item[$(c)$] As $n\rightarrow\infty$, $\lambda_n^1 \rightarrow w_0$, where 
				\begin{equation}\label{def-w_0}
					w_0=-\frac{b{\rho_s^2}}{ (\mu+ \kappa b{\rho_s^2})}.
				\end{equation}
				The other two roots $\lambda_n^2$ and $\lambda_n^3$ behave asymptotically as
				\begin{equation}\label{est-lambda_n2R}
					-\frac{1}{2}\left( w_0+\frac{1}{\kappa}\right)\pm in\sqrt{(b\rho_s+\frac{\mu}{\kappa\rho_s})}+\mathcal{O}\left(\frac{1}{\sqrt{2}n} \right).
				\end{equation}
				
				\item[$(d)$] If $n\neq m$, then $\lambda_n^\ell \neq \lambda_m^j$, for $\ell, j=1,2,3.$ 
				
				\item[$(e)$] The multiple roots $\lambda_n^\ell=\lambda_n^j$, for $\ell,j=1,2,3,$ can occur only for finitely many $n$.
			\end{itemize}
			
		\end{prop}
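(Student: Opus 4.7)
The plan is to exploit the orthogonal decomposition of $\mathcal{Z}_m$ induced by the Fourier basis \eqref{equ-Phi_n}, reducing the spectral problem for $\mathcal{A}$ to a family of three-dimensional matrix eigenvalue problems. First I would verify that $\mathcal{A}\phi_{0,1}=0$ (so $0$ is an eigenvalue with eigenfunction $\phi_{0,1}$) and that for each $n\ge1$ the three-dimensional subspace $V_n:=\mathrm{span}\{\phi_{n,1},\phi_{n,2},\phi_{n,3}\}$ is invariant under $\mathcal{A}$. A direct computation from \eqref{def-A} shows that the matrix of $\mathcal{A}|_{V_n}$ in this basis is
\begin{equation*}
\mathcal{A}_n=\begin{pmatrix}0 & -n\sqrt{b\rho_s} & 0 \\ n\sqrt{b\rho_s} & 0 & -n\sqrt{\tfrac{\mu}{\kappa\rho_s}} \\ 0 & n\sqrt{\tfrac{\mu}{\kappa\rho_s}} & -\tfrac{1}{\kappa}\end{pmatrix},
\end{equation*}
so the eigenvalues $\lambda_n^1,\lambda_n^2,\lambda_n^3$ are exactly the roots of the cubic
\begin{equation*}
p_n(\lambda)=\kappa\lambda^{3}+\lambda^{2}+n^{2}\Bigl(\tfrac{\mu}{\rho_s}+\kappa b\rho_s\Bigr)\lambda+n^{2}b\rho_s.
\end{equation*}

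For (a), all coefficients of $p_n$ are strictly positive, so Descartes' rule excludes positive real roots; moreover $p_n(0)=n^{2}b\rho_s>0$ and $p_n(-1/\kappa)=-n^{2}\mu/(\kappa\rho_s)<0$, and the intermediate value theorem produces a real root $\lambda_n^1\in(-1/\kappa,0)$. For (b) I would apply the Routh--Hurwitz criterion to $p_n/\kappa$: positivity of all coefficients together with the condition $\tfrac{1}{\kappa}\cdot n^{2}\bigl(\tfrac{\mu}{\kappa\rho_s}+b\rho_s\bigr)>\tfrac{n^{2}b\rho_s}{\kappa}$, which collapses to $\mu/(\kappa^{2}\rho_s)>0$, forces every root of $p_n$ to have strictly negative real part. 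As a consistency check, dissipativity of $\mathcal{A}$ forces $\mathrm{Re}\,\lambda\le 0$ for any eigenvalue, and the identity $\mathrm{Re}\langle\mathcal{A}z,z\rangle_{\mathcal Z}=-\tfrac{1}{\mu}\|S\|_{L^{2}}^{2}$ combined with $u(0)=u(\pi)=0$ rules out purely imaginary eigenvalues inside each $V_n$.

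For (c) I would carry out a dominant-balance analysis of $p_n$ as $n\to\infty$. Searching for a bounded root, the leading $n^{2}$ terms must cancel, giving $\lambda_n^1\to w_0=-b\rho_s^{2}/(\mu+\kappa b\rho_s^{2})$. Searching for roots of order $n$, I substitute $\lambda=n\nu$ and divide by $n^{3}$; the leading-order equation $\kappa\nu^{2}+\tfrac{\mu}{\rho_s}+\kappa b\rho_s=0$ yields $\nu=\pm i\sqrt{b\rho_s+\mu/(\kappa\rho_s)}$. The $O(1)$ correction to the real parts of $\lambda_n^{2,3}$ is then pinned down by Vieta's formula: the three roots of $p_n$ sum to $-1/\kappa$, so $\mathrm{Re}\,\lambda_n^{2,3}=-\tfrac{1}{2}(1/\kappa+\lambda_n^1)\to-\tfrac{1}{2}(w_0+1/\kappa)$. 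A standard perturbative expansion, or equivalently Rouch\'{e}'s theorem on shrinking contours around the three limit points, then upgrades these limits to the quantitative form \eqref{est-lambda_n2R} with $\mathcal{O}(1/n)$ remainder.

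For (d), if $\lambda$ were a common root of $p_n$ and $p_m$ with $n\neq m$, then $p_n(\lambda)-p_m(\lambda)=(n^{2}-m^{2})\bigl[(\tfrac{\mu}{\rho_s}+\kappa b\rho_s)\lambda+b\rho_s\bigr]=0$ would force $\lambda=w_0$; but a direct substitution gives $p_n(w_0)=w_0^{2}(\kappa w_0+1)\neq 0$ since $w_0\in(-1/\kappa,0)$ whenever $\mu>0$. For (e), the discriminant of $p_n$ is a polynomial expression in $n$, and the asymptotics in (c) show that for all large $n$ the three roots are pairwise distinct; hence this polynomial is not identically zero and vanishes for at most finitely many $n$. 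The delicate step in this plan is the rigorous quantitative control of the remainder in (c): the cleanest route is to isolate each of the three limit points by a small disc, independent of $n$, and invoke Rouch\'{e} on $p_n$ versus its leading-order truncation to pin down the precise location of the actual roots and the $\mathcal{O}(1/n)$ correction.
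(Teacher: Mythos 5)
Your proposal is essentially the paper's own proof: you make the same reduction to the invariant three-dimensional Fourier subspaces, arrive at the same cubic (your $p_n=\kappa\mathcal{F}_n$), and the arguments for (a), (d), and (e) coincide with the paper's in substance. Two details differ. For (b) the paper combines Vieta's relations with (a) to get $\lambda_n^2+\lambda_n^3<0$ and $\lambda_n^2\lambda_n^3>0$ and then observes that the pair $\lambda_n^2,\lambda_n^3$ is either real or complex-conjugate; you invoke Routh--Hurwitz, which is equivalent and slightly more self-contained (it does not lean on (a)), and your dissipativity cross-check is a nice sanity test though strictly unnecessary. For (c), the paper avoids Rouch\'{e} entirely: from Vieta it writes down the exact identities
\begin{equation*}
\mathrm{Re}\,\lambda_n^{\ell}=-\tfrac{1}{2}\lambda_n^1-\tfrac{1}{2\kappa},\qquad
\mathrm{Im}\,\lambda_n^{\ell}=(-1)^{\ell}\sqrt{\tfrac{3}{4}(\lambda_n^1)^2+\tfrac{\lambda_n^1}{2\kappa}-\tfrac{1}{4\kappa^2}+\bigl(\tfrac{\mu}{\kappa\rho_s}+b\rho_s\bigr)n^2},\quad \ell=2,3,
\end{equation*}
for large $n$, and reads off \eqref{est-lambda_n2R} directly from $\lambda_n^1\to\omega_0$. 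Your dominant-balance plus Rouch\'{e} route is sound in principle, but you do not actually carry it out, and it is heavier machinery than this problem needs: the exact quadratic formula for $\lambda_n^{2,3}$ in terms of $\lambda_n^1$ gives the $\mathcal{O}(1/n)$ remainder for free by Taylor-expanding the square root, with no contour argument. If you want the cleanest write-up, replace the Rouch\'{e} sketch with that elementary computation.
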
 
		\begin{proof}
We see that $\mathcal{A}\phi_{0,1}=0$ and $ \phi_{0,1}\neq 0$. Thus $0$ is an eigenvalue of $\mathcal{A}$.

For each $n\in \mathbb{N}$, since the subspace given by $\mathrm{span}\, \{\phi_{n,\ell}\mid \ell=1,2,3\}$ is invariant under $\mathcal{A}$, to obtain the eigenvalues of $\mathcal{A}$, we compute the eigenvalues of $\mathcal{A}$ restricted to this subspace for each $n\in \mathbb{N}$. 
For each $n\in \mathbb{N}$, the eigenvalues of $\mathcal{A}$ restricted to  the $\mathrm{span}\, \{\phi_{n,\ell}\mid \ell=1,2,3\}$ are given by the roots of the characteristic equation
			\begin{equation}\label{equ-char}
				\mathcal{F}_n(\lambda) : =	\lambda^3+\left( \frac{1}{\kappa}\right)\lambda^2+\left(\frac{\mu}{\kappa\rho_s}n^2+b {\rho_s}n^2  \right)\lambda+\left( \frac{b{\rho_s}}{\kappa}n^2\right)=0.
			\end{equation}	
			From the relation between roots and the coefficients of $\mathcal{F}_n$, we get
			
			\begin{equation}\label{eq2.6}
				\left.
				\begin{aligned}
					&\lambda_n^1+\lambda_n^2+\lambda_n^3=-\frac{1}{\kappa},\\
					&\lambda_n^1\lambda_n^2+\lambda_n^2\lambda_n^3+\lambda_n^3\lambda_n^1=\left(\frac{\mu}{\kappa\rho_s}+b{\rho_s} \right)n^2,\\
					&\lambda_n^1\lambda_n^2\lambda_n^3=- \left( \frac{b{\rho_s}}{\kappa}\right)n^2.
				\end{aligned}
				\right\}
			\end{equation}
			\\
			$(a)$ Note that, for each $n\in\mathbb{N}$,       $\mathcal{F}_n(0)>0$ and $\mathcal{F}_n(-1/\kappa)<0$. Therefore, for each $n$, there exists at least one eigenvalue $\lambda_n^1$  between $-1/\kappa$ and $0$.\\
			$(b)$ From \eqref{eq2.6}$_1$, \eqref{eq2.6}$_3$ and using $(a)$, we get
			\begin{equation}\label{negative}
				\lambda_n^2+\lambda_n^3< 0,\quad \lambda_n^2\lambda_n^3 >0 .
			\end{equation}
		Here $\lambda_n^2$ and $\lambda_n^3$ are both real or complex conjugate. In both the cases using \eqref{negative}, we get  $\text{Re } \lambda_n^2<0$ and $\text{Re } \lambda_n^3<0$.\\
			
			$(c)$ Rewriting \eqref{equ-char} as $\lambda=-n^2\left[ \frac{b\rho_s}{\lambda}+\frac{\mu}{\kappa\rho_s\left(\lambda+\frac{1}{\kappa} \right) }\right]$,
			 and using from $(a)$ that $\left\lbrace \lambda_n^1\right\rbrace $ is bounded for all $n\in \mathbb{N}$, it can be derived that $\lambda_n^1$ is convergent to $\omega_0$ as $n\rightarrow \infty$, where $\omega_0$ is given by \eqref{def-w_0}.
		
	Now from $\eqref{eq2.6}_1$ and $\eqref{eq2.6}_2$, for large $n$, we get 
			\begin{align}
				\text{Re } \lambda_n^l=&-\frac{\lambda_n^1}{2}-\frac{1}{2\kappa},\label{equ-relambdan1}\\
				\text{Im } \lambda_n^l=&(-1)^l\sqrt{3/4\left(\lambda_n^1 \right)^2+1/\left(2 \kappa\right)\lambda_n^1-1/\left( 4\kappa^2\right)+ \left(\frac{\mu}{\kappa\rho_s}+b{\rho_s} \right)n^2    },\text{ for }l=2,3.\label{equ-imlambdan}
			\end{align}
Using \eqref{equ-relambdan1}, \eqref{equ-imlambdan} and the convergence of $\lambda_n^1\rightarrow \omega_0$ as $n\rightarrow \infty$, we get \eqref{est-lambda_n2R}.\\

			$(d)$ It can be proved using a contradiction argument. Suppose that $\lambda_n^i=\lambda_m^j=\alpha$ (say) for $n\neq m$. Then $\mathcal{F}_n(\alpha)=\mathcal{F}_m(\alpha)=0$, which implies $\alpha=-\frac{b{\rho_s^2}}{ (\mu+ \kappa b{\rho_s^2})}$. This is a contradiction because with this $\alpha$, 
			$\mathcal{F}_n(\alpha)=\frac{\mu b^2\rho_s^4}{\kappa\left(\mu+ \kappa b{\rho_s^2} \right)^3 }\neq 0$. 
			Hence if $n\neq m$, $\lambda_n^\ell \neq \lambda_m^j$, for all $\ell, j=1,2,3$.

			$(e)$ It follows from $(a)$ and $(c)$.
		\end{proof}
		\begin{figure}[!tbp]
			\centering
			\subfloat[Spectrum of the operator $\mathcal{A}$]{\includegraphics[width=.8\textwidth]{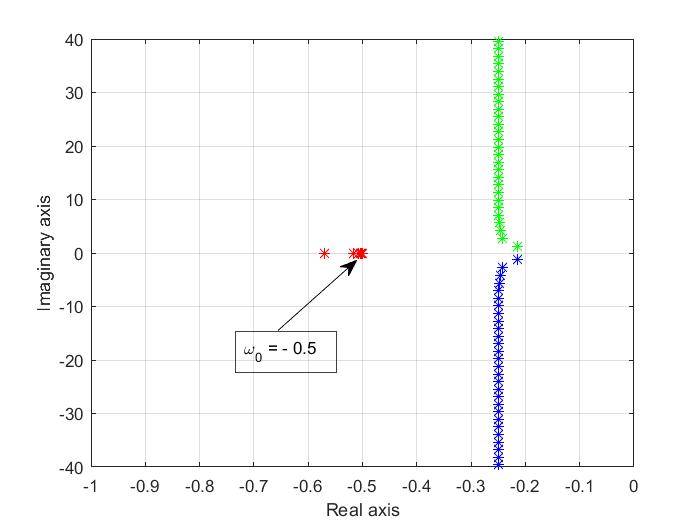}\label{fig:f1}}
			\hfill
			
			\caption{Eigenvalues of $\mathcal A$ in the complex plane for $n$ varies from $1$ to $50$ when $\mu=\rho_s=b=1$ and $\kappa=1$. Then $w_0=-0.5$. Red colour represents $\lambda_n^1$, green colour represents $\lambda_n^2$ and blue colour represents $\lambda_n^3$.}
		\end{figure}

		Now onwards, we assume the following :
		\begin{equation}\label{simpleeigen}
			\text{The spectrum of $\mathcal{A}$ has simple eigenvalues on $\mathcal{Z}_m$.}\tag{$\mathcal{H}$}
		\end{equation}
		
		Now we define the family $\left\lbrace \xi_0\right\rbrace \cup \left\lbrace \xi_{n,l}\: |\: 1\leq l\leq 3, n\in\mathbb{N}\right\rbrace$ as follows : 
		
		We choose a normalized eigenfunction of $\mathcal{A}$ for the eigenvalue $\lambda_0=0$ defined by
		\begin{equation}
			\xi_0=\frac{1}{\sqrt{b\pi}}\begin{pmatrix}
				1\\0\\0
			\end{pmatrix},
		\end{equation}
		and for the eigenvalue $\lambda_n^l$, the normalized eigenfunction defined by
		\begin{equation}\label{equ-xi_n}
			\xi_{n,l}=\frac{1}{\theta_{n,l}}\begin{pmatrix}
				-\cos(nx)\vspace{1mm}\\\frac{\lambda_n^l}{{\rho_s}n}\sin(nx)\vspace{1mm}\\\frac{\mu\lambda_n^l}{\rho_s\left( 1+\kappa\lambda_n^l\right) }\cos(nx)
			\end{pmatrix},\: l\in \left\lbrace 1,2,3\right\rbrace,\: n\in\mathbb{N},
		\end{equation}
		where 
		\begin{equation}\label{equ-theta_nl}
			\theta_{n,l}=\sqrt{\frac{\pi}{2}\left(b+\frac{\left| \lambda_n^l\right|^2 }{\rho_sn^2} + \frac{\kappa\mu\left| \lambda_n^l\right|^2 }{\rho_s^2\left| 1+\kappa\lambda_n^l\right|^2}\right) }.
		\end{equation} $\left| 1+\kappa\lambda_n^l\right|\neq 0  \left( l=1,2,3\right) $, as $-1/\kappa$ is not a root of the characteristic polynomial \eqref{equ-char}.\\
		Similarly, we choose $\left\lbrace \xi^*_0\right\rbrace \cup \left\lbrace \xi^*_{n,l}\: |\: 1\leq l\leq 3, n\in\mathbb{N}\right\rbrace$ as follows : 
		
		$\xi^*_0$ is an eigenfunction of $\mathcal{A}^*$ with the eigenvalue $\lambda_0=0$ defined by
		\begin{equation}\label{equ-xi_0^*}
			\xi^*_0=\frac{1}{\sqrt{b\pi}}\begin{pmatrix}
				1\\0\\0
			\end{pmatrix},
		\end{equation}
		and for the eigenvalue $\overline{\lambda_n^l}$, the eigenfunction defined by
		\begin{equation}\label{equ-xi_n^*}
			\xi^*_{n,l}=\frac{1}{\psi_{n,l}}\begin{pmatrix}
				\cos(nx)\vspace{1mm}\\\frac{\overline{\lambda_n^l}}{{\rho_s}n}\sin(nx)\vspace{1mm}\\-\frac{\mu\overline{\lambda_n^l}}{\rho_s\left( 1+\kappa\overline{\lambda_n^l}\right) }\cos(nx)
			\end{pmatrix},\: l\in \left\lbrace 1,2,3\right\rbrace,\: n\in\mathbb{N},
		\end{equation}
		where
		\begin{equation}\label{equ-psi_nl}
			\psi_{n,l}=\frac{\sqrt{\frac{\pi}{2}}\left(-b+\frac{\left(  \overline{\lambda_n^l}\right)^2 }{\rho_sn^2} - \frac{\mu\kappa\left(  \overline{\lambda_n^l}\right)^2 }{\rho_s^2\left(  1+\kappa\overline{\lambda_n^l}\right)^2}\right)  }{\sqrt{\left(b+\frac{\left| \lambda_n^l\right|^2 }{\rho_sn^2} + \frac{\mu\kappa\left| \lambda_n^l\right|^2 }{\rho_s^2\left| 1+\kappa\lambda_n^l\right|^2}\right) }}.
		\end{equation} 
		Here for all $n\in\mathbb{N}$, $\psi_{n,l}\neq 0$ for $l=1,2,3 $, because of the assumption \eqref{simpleeigen}.		

		The choice of this family of eigenfunctions of $\mathcal{A}$ and $\mathcal{A}^*$ ensures the following lemma.
		
		\begin{lem}\label{biorthonormality}
			Under the assumption \eqref{simpleeigen}, the families $\left\lbrace \xi_0\right\rbrace \cup \left\lbrace \xi_{n,l}\: |\: 1\leq l\leq 3, n\in\mathbb{N}\right\rbrace$ and $\left\lbrace \xi^*_0\right\rbrace \cup \left\lbrace \xi^*_{n,l}\: |\: 1\leq l\leq 3, n\in\mathbb{N}\right\rbrace$ satisfy the following bi-orthonormality relations :
			\begin{align}
				&\left\langle \xi_0, \xi^*_{k,p}\right\rangle_{\mathcal{Z}}=0,\quad \left\langle \xi_{n,l}, \xi^*_{k,p}\right\rangle_{\mathcal{Z}}=\delta_k^n\delta_p^l,\: l,p\in \left\lbrace 1,2,3\right\rbrace,\: k,n\in\mathbb{N},\label{biortho}\\
				& \left\langle \xi_0, \xi^*_{0}\right\rangle_{\mathcal{Z}}=1,\quad \left\langle \xi_{n,l}, \xi^*_{0}\right\rangle_{\mathcal{Z}}=0,\: l\in \left\lbrace 1,2,3\right\rbrace,\: n\in\mathbb{N},
			\end{align}
			where 
			$$
			\delta_n^l=\begin{cases}
				0, \: n\neq l\\
				1,\: n=l.
			\end{cases}
			$$
			
			Moreover, the asymptotic behaviors of $\theta_{n,l}$ and $\psi_{n,l}$ are as follows :
			\begin{equation}\label{cgs-theta_nl}
				\begin{aligned}
					&\left|\theta_{n,l} \right|, \left|\psi_{n,l} \right|\rightarrow  \sqrt{\frac{\pi}{2}\left( b+\frac{\kappa b^2\rho_s^2}{\mu}\right)   }, \text{ for }l=1,\text{ as }n\rightarrow \infty,\\
					& \left|\theta_{n,l} \right|,\left|\psi_{n,l} \right|\rightarrow  \sqrt{{\pi}\left(b+\frac{\mu}{\kappa\rho_s^2} \right)  }, \text{ for }l=2,3, \text{ as }n\rightarrow \infty.
				\end{aligned}
			\end{equation}
		\end{lem}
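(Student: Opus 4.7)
The plan is to handle the biorthonormality relations by a single generic eigenvector duality argument whenever two indices differ, and then turn to direct computation only for the normalization $\langle\xi_{n,l},\xi_{n,l}^*\rangle_\mathcal{Z}=1$; the asymptotics will follow from Proposition \ref{spectrumofA}.

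First I would fix the convention $\mathcal{A}\xi_{n,l}=\lambda_n^l\xi_{n,l}$ and $\mathcal{A}^*\xi_{k,p}^*=\overline{\lambda_k^p}\xi_{k,p}^*$, which one checks by a one-line substitution using the characteristic equation \eqref{equ-char}. The identity $\langle\mathcal{A}\xi_{n,l},\xi_{k,p}^*\rangle_\mathcal{Z}=\langle\xi_{n,l},\mathcal{A}^*\xi_{k,p}^*\rangle_\mathcal{Z}$ then yields
\[
(\lambda_n^l-\lambda_k^p)\,\langle\xi_{n,l},\xi_{k,p}^*\rangle_\mathcal{Z}=0,
\]
so Proposition \ref{spectrumofA}(d) together with the simplicity assumption \eqref{simpleeigen} gives orthogonality whenever $(n,l)\neq(k,p)$. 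The relations involving $\xi_0,\xi_0^*$ follow in the same way, using that $0$ is distinct from every $\lambda_n^l$ by Proposition \ref{spectrumofA}(b).

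Next I would compute $\langle\xi_{n,l},\xi_{n,l}^*\rangle_\mathcal{Z}$ component-wise using \eqref{equ-xi_n}, \eqref{equ-xi_n^*} and $\int_0^\pi\cos^2(nx)\,\rd x=\int_0^\pi\sin^2(nx)\,\rd x=\pi/2$. Taking care that $\overline{\psi_{n,l}}$ re-conjugates the $\overline{\lambda_n^l}$ inside the definition of $\psi_{n,l}$ back to $\lambda_n^l$, one finds
\[
\langle\xi_{n,l},\xi_{n,l}^*\rangle_\mathcal{Z}=\frac{\pi/2}{\theta_{n,l}\overline{\psi_{n,l}}}\Bigl(-b+\frac{(\lambda_n^l)^2}{\rho_s n^2}-\frac{\kappa\mu(\lambda_n^l)^2}{\rho_s^2(1+\kappa\lambda_n^l)^2}\Bigr),
\]
and from \eqref{equ-theta_nl}–\eqref{equ-psi_nl} the product $\theta_{n,l}\overline{\psi_{n,l}}$ equals exactly $\pi/2$ times the same bracket, so the ratio collapses to $1$. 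Note that the bracket is nonzero precisely under \eqref{simpleeigen}, which is why the definition of $\psi_{n,l}$ makes sense. The remaining check $\langle\xi_0,\xi_0^*\rangle_\mathcal{Z}=1$ is an immediate one-line computation from \eqref{innerproduct}.

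Finally, for the asymptotics I would substitute the limits of $\lambda_n^l$ from Proposition \ref{spectrumofA}(c) into the definitions of $\theta_{n,l}$ and $\psi_{n,l}$. For $l=1$: since $\lambda_n^1\to\omega_0$, the term $|\lambda_n^1|^2/(\rho_s n^2)\to 0$, while the identity $1+\kappa\omega_0=\mu/(\mu+\kappa b\rho_s^2)$ together with the explicit value of $\omega_0$ gives $\kappa\mu\omega_0^2/(\rho_s^2(1+\kappa\omega_0)^2)=\kappa b^2\rho_s^2/\mu$, producing the claimed limit. For $l=2,3$: since $|\lambda_n^l|^2\sim n^2(b\rho_s+\mu/(\kappa\rho_s))$ by \eqref{est-lambda_n2R}, one obtains $|\lambda_n^l|^2/(\rho_s n^2)\to b+\mu/(\kappa\rho_s^2)$ and $\kappa^2|\lambda_n^l|^2/|1+\kappa\lambda_n^l|^2\to 1$, hence the third term tends to $\mu/(\kappa\rho_s^2)$, yielding $|\theta_{n,l}|^2\to\pi(b+\mu/(\kappa\rho_s^2))$. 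For $|\psi_{n,l}|$ the same limits apply to the numerator, but observing that to leading order $\lambda_n^l$ is purely imaginary so $(\lambda_n^l)^2/(\rho_s n^2)\to -(b+\mu/(\kappa\rho_s^2))$ and $\mu\kappa(\lambda_n^l)^2/(\rho_s^2(1+\kappa\lambda_n^l)^2)\to\mu/(\kappa\rho_s^2)$, the three contributions add coherently in modulus to $2(b+\mu/(\kappa\rho_s^2))$, and dividing by the limit of $|\theta_{n,l}|/\sqrt{\pi/2}$ gives the announced value.

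The only delicate point is the sign bookkeeping in the $l=2,3$ asymptotic for $\psi_{n,l}$: the three contributions in the numerator must all have the same sign in the limit, which hinges on the precise fact that $\mathrm{Re}\,\lambda_n^l$ is bounded while $\mathrm{Im}\,\lambda_n^l\sim\pm n\sqrt{b\rho_s+\mu/(\kappa\rho_s)}$. Once this is made explicit via \eqref{equ-relambdan1}–\eqref{equ-imlambdan}, everything fits together and the limits in \eqref{cgs-theta_nl} drop out.
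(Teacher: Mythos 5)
Your proof is correct. The paper states Lemma \ref{biorthonormality} without proof, and your argument gives the natural, canonical justification: the identity $(\lambda_n^l-\lambda_k^p)\langle\xi_{n,l},\xi_{k,p}^*\rangle_\mathcal{Z}=0$ disposes of all off-diagonal pairings at once (using Proposition \ref{spectrumofA}(d) across different $n$, the simplicity hypothesis \eqref{simpleeigen} within a fixed $n$, and Proposition \ref{spectrumofA}(b) to separate $0$ from the rest), the normalization reduces to the algebraic observation that $\theta_{n,l}\overline{\psi_{n,l}}$ produces exactly the bracket appearing in the inner product, and the asymptotics follow by plugging in the limits from Proposition \ref{spectrumofA}(c) with correct sign bookkeeping. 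I verified the normalization identity and all four asymptotic limits; they check out. One small remark: the nonvanishing of the bracket $-b+(\lambda_n^l)^2/(\rho_s n^2)-\kappa\mu(\lambda_n^l)^2/(\rho_s^2(1+\kappa\lambda_n^l)^2)$ under \eqref{simpleeigen}, which you correctly flag as the reason $\psi_{n,l}$ is well-defined, is made explicit in the paper later as Proposition \ref{condndoubleroot} (the bracket is precisely $q_n(\lambda_n^l)$); citing that would tighten the argument, but as written it stands on its own.
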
	
		\subsection{ Riesz Basis.}\label{secRiesz}	
		
		The aim of this subsection is to show that the eigenfunctions of $\mathcal{A}$ form a Riesz basis in $\mathcal{Z}_m$.
		Recall from \eqref{equ-Phi_n} that $\left\lbrace \phi_{0,1}, \phi_{n,1},\phi_{n,2}, \phi_{n,3} | n\in\mathbb{N}\right\rbrace $  forms an orthonormal basis in $\mathcal{Z}_m$.

		For the definition of Riesz basis in $\mathcal{Z}_m$, we refer \cite[Definition 2.5.1, Chapter 2, Section
		2.5]{TW09}. In other words, the sequence $\left\lbrace \xi_0\right\rbrace \cup \left\lbrace \xi_{n,l}\: |\: 1\leq l\leq 3\right\rbrace_{ n\in\mathbb{N}}$ forms a Riesz basis in $\mathcal{Z}_m$ if there exists an
		invertible operator $Q\in\mathcal{L}\left( \mathcal{Z}_m\right) $ such that
		\begin{equation*}
			Q\phi_{0,1}=\xi_0,\: Q\phi_{n,1}=\xi_{n,1},\:Q\phi_{n,2}=\xi_{n,2},\:Q\phi_{n,3}=\xi_{n,3},\:\forall n\in\mathbb{N}.
		\end{equation*}

		\begin{lem}\label{lem-bais_representation}
			Assume \eqref{simpleeigen}. Let us set $\mathcal{B}_n^1=\left\lbrace \phi_{n,l}\: | l=1,2,3\right\rbrace $
			and $\mathcal{B}_n^2=\left\lbrace \xi_{n,l}\: | l=1,2,3\right\rbrace $ for all $n\in\mathbb{N}$, where $\left\lbrace \phi_{n,l}\right\rbrace_{1\leq l\leq 3} $ and $\left\lbrace \xi_{n,l}\right\rbrace_{1\leq l\leq 3} $ are defined as in \eqref{equ-Phi_n} and \eqref{equ-xi_n}. Let $z_n$ be expressed in the basis $\mathcal{B}_n^1$ and $\mathcal{B}_n^2$ as follows :
			\begin{equation}\label{equ-z_n}
				z_n=\sum\limits_{p=1}^{3}c_{n,p}\phi_{n,p}=\sum\limits_{l=1}^{3}d_{n,l}\xi_{n,l},
			\end{equation}
			and let the operator $\Gamma_n\in \mathbb{C}^{3\times 3}$ be defined by
			\begin{equation}\label{wedge}
				\Gamma_n \begin{pmatrix}
					c_{n,1}\\c_{n,2}\\c_{n,3}
				\end{pmatrix}= \begin{pmatrix}
					d_{n,1}\\d_{n,2}\\d_{n,3}
				\end{pmatrix}.
			\end{equation}
			Then there exist positive constants $\alpha$ and $\beta$ independent of $n$ such that
			\begin{equation}
				\left\| \Gamma_n\right\| < \alpha, \quad  \left\| \Gamma_n^{-1}\right\| < \beta,\text{ for all }n\in \mathbb{N}.
			\end{equation}
		\end{lem}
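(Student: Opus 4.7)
The plan is to realize $\Gamma_n^{-1}$, the change-of-basis matrix sending $d$-coordinates to $c$-coordinates, as an explicit $3\times 3$ matrix and then control both $\|\Gamma_n^{-1}\|$ and $\|\Gamma_n\|$ using the asymptotics recorded in \cref{spectrumofA}(c) and \eqref{cgs-theta_nl}. Using \eqref{equ-xi_n} together with the normalization of the $\phi_{n,p}$'s in \eqref{equ-Phi_n}, a direct expansion gives
\begin{equation*}
\xi_{n,l}=\frac{1}{\theta_{n,l}}\left[-\sqrt{\tfrac{b\pi}{2}}\,\phi_{n,1}+\frac{\lambda_n^l}{n}\sqrt{\tfrac{\pi}{2\rho_s}}\,\phi_{n,2}+\frac{\lambda_n^l}{\rho_s(1+\kappa\lambda_n^l)}\sqrt{\tfrac{\mu\kappa\pi}{2}}\,\phi_{n,3}\right].
\end{equation*}
Plugging this into $z_n=\sum_l d_{n,l}\xi_{n,l}=\sum_p c_{n,p}\phi_{n,p}$ and identifying coefficients, the $l$-th column of $\Gamma_n^{-1}$ is exactly the bracketed vector divided by $\theta_{n,l}$. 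Factoring the row constants and the column factors $\theta_{n,l}^{-1}$ out yields the decomposition $\Gamma_n^{-1}=D_1\,M_n\,D_{2,n}$ with $D_1=\mathrm{diag}(-\sqrt{b\pi/2},\sqrt{\pi/(2\rho_s)},\sqrt{\mu\kappa\pi/2}/\rho_s)$, $D_{2,n}=\mathrm{diag}(\theta_{n,1}^{-1},\theta_{n,2}^{-1},\theta_{n,3}^{-1})$, and
\begin{equation*}
M_n=\begin{pmatrix}1 & 1 & 1\\[2pt] \lambda_n^1/n & \lambda_n^2/n & \lambda_n^3/n\\[2pt] \tfrac{\lambda_n^1}{1+\kappa\lambda_n^1} & \tfrac{\lambda_n^2}{1+\kappa\lambda_n^2} & \tfrac{\lambda_n^3}{1+\kappa\lambda_n^3}\end{pmatrix}.
\end{equation*}

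By \cref{spectrumofA}(c), $\lambda_n^1/n\to 0$, $\lambda_n^{2,3}/n\to\pm i\sqrt{b\rho_s+\mu/(\kappa\rho_s)}$, and the numbers $\lambda_n^l/(1+\kappa\lambda_n^l)$ converge (to $-b\rho_s^2/\mu$ for $l=1$, using $1+\kappa\omega_0=\mu/(\mu+\kappa b\rho_s^2)$, and to $1/\kappa$ for $l=2,3$); by \eqref{cgs-theta_nl} each $\theta_{n,l}$ tends to a strictly positive constant. Hence the entries of $M_n$ and of $D_{2,n}$ stay in a bounded set, giving $\|\Gamma_n^{-1}\|\leq \alpha_0$ independently of $n$. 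To bound $\|\Gamma_n\|$ it remains to show $|\det\Gamma_n^{-1}|\geq \delta>0$ for $n$ large; since $D_1$ is a fixed invertible diagonal and $\det D_{2,n}$ is bounded away from $0$, the issue reduces to computing $\lim_{n\to\infty}\det M_n$. Expanding along the first row with $\alpha_2=\alpha_3=1/\kappa$ and $\alpha_1=-b\rho_s^2/\mu$, one finds
\begin{equation*}
\lim_{n\to\infty}\det M_n = 2i\sqrt{b\rho_s+\tfrac{\mu}{\kappa\rho_s}}\left(\tfrac{1}{\kappa}+\tfrac{b\rho_s^2}{\mu}\right)\neq 0,
\end{equation*}
so $|\det\Gamma_n^{-1}|\geq \delta$ for every $n\geq n_0$, and the uniform control of the entries of $\Gamma_n^{-1}$ then forces $\|\Gamma_n\|$ to be uniformly bounded for all such $n$ via the adjugate formula.

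For the finitely many remaining indices $n<n_0$, assumption \eqref{simpleeigen} guarantees that $\{\xi_{n,1},\xi_{n,2},\xi_{n,3}\}$ is linearly independent, so $\Gamma_n$ is invertible on each of these finite-dimensional blocks; the maxima of $\|\Gamma_n\|$ and $\|\Gamma_n^{-1}\|$ over this finite set can be absorbed into the constants $\alpha,\beta$. The main obstacle is precisely the nonvanishing of the limit determinant: the $l=1$ column is qualitatively different from $l=2,3$ (one stays bounded, the others grow like $n$), so there is no purely dimensional reason preventing a cancellation. It is therefore essential to identify the two contributing terms $2i c/\kappa$ and $2i c\,b\rho_s^2/\mu$ (which have the same sign) and to use the hyperbolic asymptotics \eqref{est-lambda_n2R} that produce the opposite imaginary parts $\pm ic$ in the second row.
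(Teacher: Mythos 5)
The paper states Lemma~\ref{lem-bais_representation} without proof, so your argument supplies the missing verification; it is correct and, as far as I can tell, it is the natural route. Your expansion of each $\xi_{n,l}$ in the orthonormal basis $\{\phi_{n,p}\}$ is right (one checks directly that $\cos(nx)=\sqrt{b\pi/2}\,\phi_{n,1}$ in the first slot, $\sin(nx)=\sqrt{\rho_s\pi/2}\,\phi_{n,2}$ in the second, and $\cos(nx)=\sqrt{\kappa\pi/(2\mu)}\,\phi_{n,3}$ in the third, which produces exactly your bracketed vector), so the factorization $\Gamma_n^{-1}=D_1M_nD_{2,n}$ is accurate. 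The entrywise bounds on $M_n$ and $D_{2,n}$ follow from Proposition~\ref{spectrumofA}(c) and \eqref{cgs-theta_nl} as you say, and the limit determinant
\begin{equation*}
\lim_{n\to\infty}\det M_n=2i\sqrt{b\rho_s+\tfrac{\mu}{\kappa\rho_s}}\left(\tfrac{1}{\kappa}+\tfrac{b\rho_s^2}{\mu}\right)
\end{equation*}
checks out (expanding $\det$ along the first column with $a_1=-b\rho_s^2/\mu$, $a_2=a_3=1/\kappa$ gives $2ic(a_2-a_1)$, and $a_1<0<a_2$ prevents cancellation). Since $\det D_1$ is a fixed nonzero constant and $\det D_{2,n}\to$ a nonzero limit, $|\det\Gamma_n^{-1}|$ is bounded below for $n\ge n_0$, and the adjugate formula combined with the uniform entry bounds yields a uniform bound on $\|\Gamma_n\|$ for those $n$. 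Finally, absorbing the finitely many $n<n_0$ via the invertibility of $\Gamma_n$ guaranteed by assumption~\eqref{simpleeigen} is exactly the right way to close the argument. Your closing remark on why the nonvanishing of the limit determinant is the real content of the lemma is a useful observation; the only small imprecision is the phrase ``the others grow like $n$,'' which refers to the eigenvalues $\lambda_n^{2,3}$ themselves, not to the (already rescaled, hence bounded) entries of $M_n$, but this does not affect the proof.
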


Using the \cref{lem-bais_representation} and from the second part of  \cite[Proposition 2.5.3, Chapter 2, Section 2.5]{TW09}, we can see that the eigenfunctions of $\mathcal{A}$, $\big\lbrace\xi_0 , \xi_{n,l}\: | 1\leq l\leq 3, \: n\in \mathbb{N}\big\rbrace $ forms a Riesz basis in $\mathcal{Z}_m$. Also using \cite[Proposition 2.8.6, Chapter 2, Section 2.8]{TW09}, \cite[Definition 2.6.1, Chapter 2, Section 2.6]{TW09} and  \cref{biorthonormality}, we obtain
		$\left\lbrace\xi_0^* , \xi^*_{n,l}\: | 1\leq l\leq 3, \: n\in \mathbb{N}\right\rbrace $ also forms a Riesz basis in $\mathcal{Z}_m$.
		\begin{prop}\label{basis_representation}
			Assume \eqref{simpleeigen} holds. Then any $z\in{\mathcal{Z}_m}$ can be uniquely represented as
			\begin{equation}\label{expansion_z}
				z=\sum\limits_{n=1}^{\infty}\sum\limits_{l=1}^{3}\left\langle z, \xi^*_{n,l} \right\rangle_{\mathcal{Z}} \xi_{n,l}+\left\langle z, \xi^*_{0} \right\rangle_{\mathcal{Z}} \xi_{0}. 
			\end{equation}
			There exist positive numbers $C_1$ and $C_2$ such that
			\begin{equation}\label{equ-est-z_0}
				C_1\left( \left| \left\langle z, \xi^*_{0} \right\rangle\right|^2+\sum_{n=1}^{\infty}\sum_{l=1}^{3}\left| \left\langle z, \xi^*_{n,l} \right\rangle\right|^2 \right) \leq \left\|z \right\|^2_{{\mathcal{Z}_m}}\leq C_2\left(\left| \left\langle z, \xi^*_{0} \right\rangle\right|^2+ \sum_{n=1}^{\infty}\sum_{l=1}^{3}\left| \left\langle z, \xi^*_{n,l} \right\rangle\right|^2 \right). 
			\end{equation}

			Similarly, any $z\in\mathcal{Z}_m$ can be uniquely represented in the basis $\left\lbrace\xi_0^* \right\rbrace \cup \left\lbrace \xi^*_{n,l}\: | 1\leq l\leq 3, \: n\in \mathbb{N}\right\rbrace $ by
			\begin{equation}\label{expansion_z-adj}
				z=\sum\limits_{n=1}^{\infty}\sum\limits_{l=1}^{3}\left\langle z, \xi_{n,l} \right\rangle_{\mathcal{Z}} \xi^*_{n,l}+\left\langle z, \xi_{0} \right\rangle_{\mathcal{Z}} \xi^*_{0}, 
			\end{equation}
			and
			\begin{equation}\label{equ-est-adj-z_0}
				\frac{1}{C_2}\left( \left| \left\langle z, \xi_{0} \right\rangle\right|^2+\sum_{n=1}^{\infty}\sum_{l=1}^{3}\left| \left\langle z, \xi_{n,l} \right\rangle\right|^2 \right) \leq \left\|z \right\|^2_{{\mathcal{Z}_m}}\leq \frac{1}{C_1}\left(\left| \left\langle z, \xi_{0} \right\rangle\right|^2+ \sum_{n=1}^{\infty}\sum_{l=1}^{3}\left| \left\langle z, \xi_{n,l} \right\rangle\right|^2 \right). 
			\end{equation}
		\end{prop}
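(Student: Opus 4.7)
The plan is to deduce \cref{basis_representation} directly from the Riesz basis structure established in the paragraph preceding the proposition, combined with the biorthonormality relations in \cref{biorthonormality}. By \cref{lem-bais_representation} together with \cite[Proposition 2.5.3, Chapter 2, Section 2.5]{TW09}, there exists an invertible operator $Q\in\mathcal{L}(\mathcal{Z}_m)$ with $Q\phi_{0,1}=\xi_0$ and $Q\phi_{n,l}=\xi_{n,l}$ for every $n\in\mathbb{N}$ and $l\in\{1,2,3\}$. Because the biorthonormality in \cref{biorthonormality} characterizes $\{\xi_0^*\}\cup\{\xi_{n,l}^*\}$ uniquely as the biorthogonal dual of $\{\xi_0\}\cup\{\xi_{n,l}\}$, one has $(Q^*)^{-1}\phi_{0,1}=\xi_0^*$ and $(Q^*)^{-1}\phi_{n,l}=\xi_{n,l}^*$.

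For the expansion \eqref{expansion_z}, I would fix $z\in\mathcal{Z}_m$, expand $Q^{-1}z$ in the orthonormal basis $\{\phi_{0,1}\}\cup\{\phi_{n,l}\}$, and apply $Q$ term by term to obtain
\begin{equation*}
z \;=\; \langle Q^{-1}z,\phi_{0,1}\rangle_{\mathcal{Z}}\,\xi_0 \;+\; \sum_{n=1}^{\infty}\sum_{l=1}^{3}\langle Q^{-1}z,\phi_{n,l}\rangle_{\mathcal{Z}}\,\xi_{n,l}.
\end{equation*}
The coefficients are then rewritten intrinsically using $\langle Q^{-1}z,\phi_{n,l}\rangle_{\mathcal{Z}}=\langle z,(Q^*)^{-1}\phi_{n,l}\rangle_{\mathcal{Z}}=\langle z,\xi_{n,l}^*\rangle_{\mathcal{Z}}$, and similarly for $\xi_0^*$. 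Uniqueness follows immediately from \eqref{biortho}: pairing any candidate expansion with $\xi_{n,l}^*$ or $\xi_0^*$ recovers each coefficient.

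For the norm equivalence \eqref{equ-est-z_0}, I would invoke the Parseval identity
\begin{equation*}
|\langle z,\xi_0^*\rangle_{\mathcal{Z}}|^2+\sum_{n=1}^{\infty}\sum_{l=1}^{3}|\langle z,\xi_{n,l}^*\rangle_{\mathcal{Z}}|^2 \;=\; \|Q^{-1}z\|_{\mathcal{Z}_m}^2,
\end{equation*}
combined with the two-sided bound $\|Q\|^{-1}\|z\|_{\mathcal{Z}_m}\leq \|Q^{-1}z\|_{\mathcal{Z}_m}\leq \|Q^{-1}\|\,\|z\|_{\mathcal{Z}_m}$, which yields the explicit constants $C_1=\|Q^{-1}\|^{-2}$ and $C_2=\|Q\|^{2}$.

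Finally, the adjoint statements \eqref{expansion_z-adj}--\eqref{equ-est-adj-z_0} follow by running the identical argument with $Q$ replaced by $(Q^*)^{-1}$, which sends the orthonormal basis to $\{\xi_0^*\}\cup\{\xi_{n,l}^*\}$ (itself a Riesz basis by \cite[Prop.~2.8.6, Chap.~2]{TW09}). Since $\|(Q^*)^{-1}\|=\|Q^{-1}\|$ and $\|Q^*\|=\|Q\|$, the resulting constants are $\|Q\|^{-2}=1/C_2$ and $\|Q^{-1}\|^{2}=1/C_1$, which is exactly the pairing appearing in \eqref{equ-est-adj-z_0}. No genuine obstacle is anticipated: the proposition is essentially bookkeeping built on top of the Riesz basis property, which is the actual substantive input and has already been secured by \cref{lem-bais_representation} and \cref{biorthonormality}.
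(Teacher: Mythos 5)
Your proposal is correct and follows exactly the route the paper intends: the paper leaves \cref{basis_representation} without a written proof, but the paragraph immediately preceding it supplies the same ingredients you use, namely that \cref{lem-bais_representation} together with \cite[Prop.~2.5.3]{TW09} gives the bounded invertible operator $Q$ making $\{\xi_0\}\cup\{\xi_{n,l}\}$ a Riesz basis, while \cref{biorthonormality} combined with \cite[Prop.~2.8.6, Def.~2.6.1]{TW09} identifies $\{\xi_0^*\}\cup\{\xi_{n,l}^*\}$ as its (unique) biorthogonal Riesz basis, i.e.\ the image of the orthonormal basis under $(Q^*)^{-1}$. Your bookkeeping for the constants $C_1=\|Q^{-1}\|^{-2}$, $C_2=\|Q\|^2$ and their swap to $1/C_2$, $1/C_1$ in the adjoint inequality is exactly the reason the proposition pairs the bounds as in \eqref{equ-est-z_0}--\eqref{equ-est-adj-z_0}.
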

		
		As a consequence of the above proposition, from \cite[Remark 2.6.4, Chapter 2, Section
		2.6]{TW09}, we get the following result of spectrum of $\mathcal{A}$ and $\mathcal{A}^*$.
		\begin{thm}
			The spectrum of the operator $(\mathcal{A}, \mathcal{D}(\mathcal{A}; \mathcal{Z}_m))$ is given by 
			$$\sigma(\mathcal{A})=\left\lbrace \lambda_0=0, \lambda_n^j,\: n\in\mathbb{N},\: j=1,2,3\right\rbrace \cup \left\lbrace w_0\right\rbrace \text{ in }\mathbb{C},$$
			i.e, the closure of $\{\lambda_0=0, \lambda_n^j,\: n\in\mathbb{N},\: j=1,2,3\}$, the set of eigenvalues of $\mathcal{A}$, in $\mathbb{C}$. 
			Here, $w_0$ is the same as in \eqref{def-w_0}.
			
			Similarly $\sigma(\mathcal{A}^*)=\left\lbrace \lambda_0, \lambda_n^j,\: n\in\mathbb{N},\: j=1,2,3\right\rbrace \cup \left\lbrace w_0\right\rbrace .$
		\end{thm}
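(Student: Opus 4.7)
The plan is to deduce the theorem directly from the Riesz basis structure already established. By \cref{basis_representation}, the family $\{\xi_0\} \cup \{\xi_{n,l} : 1 \le l \le 3,\ n \in \mathbb{N}\}$ is a Riesz basis of $\mathcal{Z}_m$ consisting entirely of eigenvectors of $\mathcal{A}$, with eigenvalues $\{0\} \cup \{\lambda_n^j\}_{n\in\mathbb{N},\, j=1,2,3}$. This is precisely the setting of \cite[Remark 2.6.4, Chapter 2]{TW09}, which asserts that for an operator diagonalizable with respect to a Riesz basis, the spectrum equals the closure in $\mathbb{C}$ of its point spectrum. The first step is therefore simply to invoke this fact and write $\sigma(\mathcal{A}) = \overline{\{0\} \cup \{\lambda_n^j : n\in\mathbb{N},\ j=1,2,3\}}$.

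The remaining work is to identify this closure. I would appeal to \cref{spectrumofA}. Part (c) gives $\lambda_n^1 \to w_0$ as $n\to\infty$, so $w_0$ is a limit point that must be added to the set. The asymptotic expansion \eqref{est-lambda_n2R} shows $|\operatorname{Im} \lambda_n^{2}|,\, |\operatorname{Im} \lambda_n^{3}| \sim n$, hence these sequences escape to infinity and contribute no finite limit point. By \cref{spectrumofA}(d)--(e), within any bounded region of $\mathbb{C}$ away from $w_0$ the eigenvalues are isolated (only finitely many collisions occur, and distinct $n$ produce distinct eigenvalues). Since \cref{spectrumofA}(a) moreover confines $\{\lambda_n^1\}$ to $(-1/\kappa, 0)$, these are the only possible accumulation. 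Thus the closure is exactly $\{0\} \cup \{\lambda_n^j\} \cup \{w_0\}$, as claimed. One may also verify that $w_0$ is not itself an eigenvalue: indeed, plugging $\lambda = w_0$ into the characteristic polynomial \eqref{equ-char} gives $\mathcal{F}_n(w_0) = \frac{\mu b^2 \rho_s^4}{\kappa(\mu + \kappa b \rho_s^2)^3} \neq 0$ for every $n$ (this identity already appeared inside the proof of \cref{spectrumofA}(d)), confirming that $w_0$ enters the spectrum only as a limit point.

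For the adjoint, the same strategy applies without change. \cref{basis_representation} likewise supplies a Riesz basis of eigenvectors of $\mathcal{A}^*$ with eigenvalues $\{0\} \cup \{\overline{\lambda_n^j}\}$, so another application of \cite[Remark 2.6.4]{TW09} gives $\sigma(\mathcal{A}^*) = \overline{\{0\} \cup \{\overline{\lambda_n^j}\}}$. Since $w_0$ is real and the non-real eigenvalues of $\mathcal{A}$ come in complex conjugate pairs (by \eqref{equ-relambdan1}--\eqref{equ-imlambdan}), the set $\{0\} \cup \{\lambda_n^j\} \cup \{w_0\}$ is invariant under conjugation, yielding $\sigma(\mathcal{A}^*) = \sigma(\mathcal{A})$.

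The proof is essentially bookkeeping on top of the structural results already proved. The one substantive point, which is packaged inside \cite[Remark 2.6.4]{TW09}, is that having a Riesz basis of eigenvectors forces the resolvent $(\lambda - \mathcal{A})^{-1}$ to be bounded whenever $\lambda$ stays uniformly away from the point spectrum; this in turn relies on the lower bound for the eigenvalue gaps guaranteed by \cref{spectrumofA}(c)--(e). I would regard this as the only nontrivial ingredient — everything else is a routine identification of the closure using the asymptotics of \cref{spectrumofA}.
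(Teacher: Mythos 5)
Your proposal is correct and follows essentially the same route as the paper, which derives the theorem as a one-line consequence of \cref{basis_representation} together with \cite[Remark 2.6.4]{TW09}. You have simply spelled out the closure computation that the paper leaves implicit: $\lambda_n^1 \to w_0$ gives the one finite accumulation point, the divergence of $\operatorname{Im}\lambda_n^{2,3}$ rules out any others, and the computation $\mathcal{F}_n(w_0) \neq 0$ from the proof of \cref{spectrumofA}(d) confirms $w_0$ enters only as a limit point and not as an eigenvalue; the conjugate-pair symmetry then carries everything to $\mathcal{A}^*$.
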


	\subsection{ Projections and the projected system}\label{secproj}
	Let us define the following finite dimensional spaces
	\begin{equation}\label{def-Zn}
		\mathbf{Z}_0=\text{span} \left\lbrace \xi_{0} \right\rbrace,\:\mathbf{Z}_n=\text{span} \left\lbrace \xi_{n,l},\: l=1,2,3\right\rbrace , n\geq 1.
	\end{equation}
	
	\cref{basis_representation} gives that the space ${\mathcal Z_m}$ is  the orthogonal sum of the subspaces $\left\lbrace  \mathbf{Z}_n\right\rbrace_{n\geq 0} $, i.e., 
	\begin{equation*}
		\mathcal{Z}_m= \oplus_{n=0}^{\infty}\mathbf{Z}_n.
	\end{equation*}
	In view of \eqref{expansion_z}, let us define the projection $\pi_n\in \mathcal{L}\left(\mathcal{Z}_m \right) $ with Range $\pi_n\subset \mathbf{Z}_n$ by
	\begin{equation}\label{def-pi}
		\pi_n z=  \sum\limits_{l=1}^{3}\left\langle z, \xi^*_{n,l} \right\rangle \xi_{n,l},\quad \forall\, n\geq 1,\quad 
		\pi_0 z = \left\langle z, \xi^*_{0} \right\rangle \xi_{0}, \quad \forall\, z\in \mathcal{Z}_m.
	\end{equation}
	
Similarly, defining 
\begin{equation}\label{def-adjZn}
		\mathbf{Z}^*_0=\text{span} \left\lbrace \xi^*_{0} \right\rbrace,\:\mathbf{Z}^*_n=\text{span} \left\lbrace \xi^*_{n,l},\: l=1,2,3\right\rbrace , n\geq 1,
	\end{equation}
we get $\mathcal{Z}_m= \oplus_{n=0}^{\infty}\mathbf{Z}^*_n$. 
In view of \eqref{expansion_z-adj}, the projection $\pi^*_n\in \mathcal{L}\left(\mathcal{Z}_m \right) $ with Range $\pi^*_n\subset \mathbf{Z}^*_n$ by
	\begin{equation}\label{def-adj-pi}
		\pi^*_n z=  \sum\limits_{l=1}^{3}\left\langle z, \xi_{n,l} \right\rangle \xi^*_{n,l},\quad \forall\, n\geq 1,\quad 
		\pi^*_0 z = \left\langle z, \xi_{0} \right\rangle \xi^*_{0}, \quad \forall\, z\in \mathcal{Z}_m.
	\end{equation}

	Now, let $f_2=0, f_3=0, f_1=f$ and $\mathcal{O}_1= (0,\pi)$. Then the system  \eqref{eq2}-\eqref{bdd-ini} can be rewritten as 
	\begin{equation} \label{opp-eqn}
		\dot{z}(t) = \mathcal{A} z(t) + \mathcal{B} f(t), \quad t\in (0,T), \qquad z(0) = z_{0},
	\end{equation}
	where $z(t) = (\rho(t,\cdot), u(t, \cdot), S(t,\cdot))^{\top},$ $z_{0} =(\rho_{0}, u_{0}, S_0)^{\top}\in\mathcal{Z}_m$ , $\left( \mathcal{A},\mathcal{D}\left( \mathcal{A};\mathcal{Z}_m\right) \right) $ is as defined in \eqref{def-A} and the control operator $\mathcal{B} \in \mathcal{L}(L^2(0,\pi);{\mathcal{Z}_m})$ is defined by
	\begin{equation}\label{defn-Bf(t)}
		\mathcal{B}f=\begin{pmatrix}
			f\\0\\0
		\end{pmatrix}, \quad \forall\, f\in L^2(0,\pi).
	\end{equation}

For each $n\in  \mathbb{N}\cup\{0\}$, since $\mathbf{Z}_n$ is the eigenspace of $\mathcal{A}$ with finite dimension, $\mathbf{Z}_n$ is a subspace of $\mathcal{D}(\mathcal A ; \mathcal{Z}_m)$ and the following result holds. 
	\begin{lem}\label{lem-An}
		For each $n\in \mathbb{N}\cup\{0\}$, the space $\mathbf{Z}_n$ defined in \eqref{def-Zn} is invariant under $\mathcal{A}$ and
		\begin{equation}
			\pi_n \mathcal{A}=\pi_n\mathcal{A}\pi_n=\mathcal{A}\pi_n.
		\end{equation}
		For each $n\in \mathbb{N}$, the matrix representation of $\pi_n\mathcal{A}|_{\mathbf{Z}_n}$ in the basis $\left\lbrace \xi_{n,l},\xi_{n,2},\xi_{n,3}\right\rbrace $ is $\mathcal{A}_n$ defined by
		\begin{equation}
			\mathcal{A}_n=\begin{pmatrix}
				\lambda_n^1&0&0\\
				0&\lambda_n^2&0\\
				0&0&\lambda_n^3
			\end{pmatrix},
		\end{equation}
		where $\{\lambda_n^\ell \mid \ell=1,2,3\}$ are the eigenvalues of $\mathcal{A}$ as mentioned in \cref{spectrumofA}. Also for $n=0$, $\mathcal{A}_0=0$.
		
For all $n\in \mathbb{N}\cup\{0\}$, the adjoint of $\pi_n \mathcal{A}|_{\mathbf{Z}_n}$ is defined by $(\mathcal{A}^*\pi^*_n)|_{\mathbf{Z}^*_n}$ with the matrix representation 
$\mathcal{A}^*_n$ in the basis of $\mathbf{Z}^*_n$. 

Furthermore, there exists a positive constant $C$ independent of $n$ such that 
\begin{equation}\label{equniformfinsemigr}
 \|e^{t \pi_n \mathcal{A}}\|_{\mathcal{L}(\mathbf{Z}_n)}\le C, \quad \|e^{t (\pi_n \mathcal{A})^*}\|_{\mathcal{L}(\mathbf{Z}^*_n)}\le C, \quad \forall\, n\in \mathbb{N}\cup \{0\}.
 \end{equation}
	\end{lem}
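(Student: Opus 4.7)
The plan is to verify the algebraic claims directly from the fact that the $\xi_{n,\ell}$ are eigenfunctions of $\mathcal{A}$ and from the bi-orthonormality of \cref{biorthonormality}, and to obtain the uniform exponential bound \eqref{equniformfinsemigr} by transporting the computation to the orthonormal Fourier basis through the change-of-basis matrix $\Gamma_n$ controlled in \cref{lem-bais_representation}. By construction, $\mathcal{A}\xi_0 = 0$ and $\mathcal{A}\xi_{n,\ell} = \lambda_n^\ell\,\xi_{n,\ell}$ for $n \geq 1$ and $\ell \in \{1,2,3\}$, so each $\mathbf{Z}_n$ lies in $\mathcal{D}(\mathcal{A};\mathcal{Z}_m)$ and is invariant under $\mathcal{A}$. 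Because $\pi_n$ annihilates $\mathbf{Z}_k$ for $k \neq n$ by bi-orthonormality, expanding any $z \in \mathcal{D}(\mathcal{A};\mathcal{Z}_m)$ as $z = \sum_k \pi_k z$ via \cref{basis_representation} and using $\mathcal{A}(\mathbf{Z}_k) \subset \mathbf{Z}_k$ gives $\mathcal{A}\pi_n z = \pi_n\mathcal{A}\pi_n z = \pi_n\mathcal{A} z$ term-by-term. Reading off $\mathcal{A}\xi_{n,\ell} = \lambda_n^\ell\,\xi_{n,\ell}$ shows that the matrix of $\pi_n\mathcal{A}|_{\mathbf{Z}_n}$ in the basis $\{\xi_{n,1}, \xi_{n,2}, \xi_{n,3}\}$ is the diagonal $\mathcal{A}_n = \mathrm{diag}(\lambda_n^1, \lambda_n^2, \lambda_n^3)$, and $\mathcal{A}_0 = 0$ trivially.

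For the adjoint, the identity $\langle \pi_n\mathcal{A} z, w\rangle_\mathcal{Z} = \langle z, \mathcal{A}^* \pi_n^* w\rangle_\mathcal{Z}$ for $z \in \mathcal{D}(\mathcal{A};\mathcal{Z}_m)$ and $w \in \mathcal{D}(\mathcal{A}^*;\mathcal{Z}_m)$ follows from bi-orthonormality applied to $\pi_n z \in \mathbf{Z}_n$ and $\pi_n^* w \in \mathbf{Z}_n^*$. Restricting $w$ to $\mathbf{Z}_n^*$ identifies $(\pi_n\mathcal{A}|_{\mathbf{Z}_n})^* = (\mathcal{A}^*\pi_n^*)|_{\mathbf{Z}_n^*}$; since $\mathcal{A}^*\xi_{n,\ell}^* = \overline{\lambda_n^\ell}\,\xi_{n,\ell}^*$ by \eqref{equ-xi_n^*}, this operator has diagonal matrix $\mathcal{A}_n^* = \mathrm{diag}(\overline{\lambda_n^1}, \overline{\lambda_n^2}, \overline{\lambda_n^3})$ in the basis $\{\xi_{n,\ell}^*\}$.

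The hard part is the uniform bound \eqref{equniformfinsemigr}: the family $\{\xi_{n,\ell}\}$ is not orthonormal in $\mathcal{Z}$, and one must rule out the possibility that its conditioning degenerates as $n \to \infty$. In the eigenbasis the semigroup is diagonal, $e^{t\mathcal{A}_n} = \mathrm{diag}(e^{t\lambda_n^1}, e^{t\lambda_n^2}, e^{t\lambda_n^3})$, and by \cref{spectrumofA}$(b)$ each $\lambda_n^\ell$ has strictly negative real part, so
\begin{equation*}
\|e^{t\mathcal{A}_n}\|_{\mathrm{op}} = \max_{\ell} |e^{t\lambda_n^\ell}| \leq 1, \qquad t \geq 0.
\end{equation*}
Since $\mathbf{Z}_n$ carries the norm inherited from $\mathcal{Z}_m$ and $\{\phi_{n,\ell}\}$ is orthonormal, the operator norm on $\mathbf{Z}_n$ equals the Euclidean operator norm of the representing matrix in the basis $\{\phi_{n,\ell}\}$; the similarity $\Gamma_n^{-1}\mathcal{A}_n\Gamma_n$ representing $\pi_n\mathcal{A}|_{\mathbf{Z}_n}$ in this orthonormal basis then gives
\begin{equation*}
\|e^{t\pi_n\mathcal{A}}\|_{\mathcal{L}(\mathbf{Z}_n)} = \|\Gamma_n^{-1} e^{t\mathcal{A}_n}\Gamma_n\|_{\mathrm{op}} \leq \|\Gamma_n^{-1}\|\,\|\Gamma_n\| \leq \alpha\beta,
\end{equation*}
uniformly in $n$ and $t \geq 0$, where $\alpha, \beta$ are precisely the uniform bounds supplied by \cref{lem-bais_representation}. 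The case $n = 0$ is immediate since $\pi_0\mathcal{A} = 0$, and the adjoint estimate follows by the same argument applied to the basis $\{\xi_{n,\ell}^*\}$, whose change-of-basis matrix with respect to $\{\phi_{n,\ell}\}$ is, up to a conjugate transpose, $\Gamma_n^{-1}$ (by bi-orthonormality) and hence also uniformly conditioned.
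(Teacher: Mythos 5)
The paper states \cref{lem-An} without proof, so there is nothing to compare against; I can only evaluate your argument on its own merits, and it is correct. The algebraic claims (invariance of $\mathbf{Z}_n$, the identity $\pi_n\mathcal{A}=\pi_n\mathcal{A}\pi_n=\mathcal{A}\pi_n$ via the decomposition $z=\sum_k\pi_k z$, the diagonal matrix $\mathcal{A}_n$, and the identification of the adjoint) all follow cleanly from the eigenrelations $\mathcal{A}\xi_{n,\ell}=\lambda_n^\ell\xi_{n,\ell}$, the bi-orthonormality of \cref{biorthonormality}, and the fact that $\mathbf{Z}_n=\mathbf{Z}_n^*=\mathrm{span}\{\phi_{n,\ell}\mid\ell=1,2,3\}$. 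For the uniform bound \eqref{equniformfinsemigr}, your route through the similarity $\Gamma_n^{-1}\mathcal{A}_n\Gamma_n$ in the orthonormal $\{\phi_{n,\ell}\}$ basis, combined with $\max_\ell|e^{t\lambda_n^\ell}|\le 1$ (from \cref{spectrumofA}$(b)$) and the uniform conditioning of $\Gamma_n$ from \cref{lem-bais_representation}, is sound; the dual change of basis being $(\Gamma_n^{-1})^*$ is also verified correctly from bi-orthonormality.

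One remark: the estimate \eqref{equniformfinsemigr} admits a shorter proof that bypasses \cref{lem-bais_representation} entirely. The operator $(\mathcal{A},\mathcal{D}(\mathcal{A};\mathcal{Z}_m))$ is maximal dissipative (this is precisely Step~1 of \cref{pr:semigroup-z}, restricted to $\mathcal{Z}_m$), hence $\mathbb{T}_t$ is a contraction semigroup on $\mathcal{Z}_m$. Since $\mathbf{Z}_n$ is a finite-dimensional $\mathcal{A}$-invariant subspace contained in $\mathcal{D}(\mathcal{A};\mathcal{Z}_m)$, uniqueness of solutions to the abstract Cauchy problem gives $\mathbb{T}_t\big|_{\mathbf{Z}_n}=e^{t\pi_n\mathcal{A}}$, and therefore
\begin{equation*}
\|e^{t\pi_n\mathcal{A}}\|_{\mathcal{L}(\mathbf{Z}_n)}\le\|\mathbb{T}_t\|_{\mathcal{L}(\mathcal{Z}_m)}\le 1,\qquad t\ge 0,\ n\in\mathbb{N}\cup\{0\},
\end{equation*}
with the analogous bound for $\mathbb{T}_t^*$ (whose generator $\mathcal{A}^*$ is also maximal dissipative by \cref{adj-op}). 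This gives $C=1$ directly, and is slightly more robust since it does not rely on the simple-eigenvalue hypothesis \eqref{simpleeigen} underlying the definition and conditioning of $\Gamma_n$. Your proof is correct as written; the alternative is just worth knowing.
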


Now, we introduce the finite dimensional subspaces $E_n$ of $L^2(0,\pi)$ defined by
	\begin{equation}\label{def-E_n}
		E_0=\text{ span }\left\lbrace \frac{1}{\sqrt{\pi}}\right\rbrace,\quad E_n=\text{ span }\left\lbrace \sqrt{\frac{2}{\pi}}\cos (nx);\: x\in (0,\pi)\right\rbrace,\quad n\in\mathbb{N}. 
	\end{equation}
	Then $L^2(0,\pi)=\oplus_{n=0}^{\infty}E_n$ and any $g\in L^2\left(0,T;L^2(0,\pi) \right) $ can be uniquely expressed in the form
	\begin{equation}\label{f=f_n}
		g=\sum\limits_{n=0}^{\infty}g_n,\text{ where }g_n\in E_n, \quad \text{with}\quad 
		\left\| g\right\|^2_{L^2(0,\pi)}= \sum\limits_{n=0}^{\infty}\left\| g_n\right\|^2_{L^2(0,\pi)}.
	\end{equation}

	\begin{lem}\label{lem-Bn}
	Let $g\in L^2(0,\pi) $ be decomposed in the form \eqref{f=f_n}. Then
		\begin{equation}\label{pi_nf=pi_nf_n}
			\pi_n\mathcal{B}g= \pi_n\mathcal{B}g_n,\text{ for all }n\geq 0,
		\end{equation}
and $\pi_n\mathcal{B}|_{E_n}\in \mathcal{L}(E_n, \mathbf{Z}_n)$. 		
For any $n\in \mathbb{N}$, the matrix representation of $\pi_n \mathcal{B}|_{E_n}$ in the basis $\left\lbrace \sqrt{\frac{2}{\pi}}\cos (nx)\right\rbrace $ of $E_n$ and $\left\lbrace \xi_{n,1},\xi_{n,2}, \xi_{n,3}\right\rbrace $ of $\mathbf{Z}_n$ is
		\begin{equation}\label{B_n}
			\mathcal{B}_n= b\sqrt{\frac{\pi}{2}}\begin{pmatrix}
				\frac{1}{{\bar\psi_{n,1}}}\\\frac{1}{{\bar\psi_{n,2}}}\\\frac{1}{{\bar\psi_{n,3}}}
			\end{pmatrix},
		\end{equation}
where $\psi_{n,\ell}$, for $n\in \mathbb{N}$ and $\ell=1,2,3$ is defined in \eqref{equ-psi_nl}. 
For $n=0$, $\mathcal{B}_0=\sqrt{b}$.	

For all, $n\in \mathbb{N}\cup \{0\}$, the adjoint of $\pi_n\mathcal{B}|_{E_n}$ is defined by $\mathcal{B}^*\pi^*_n|_{\mathbf{Z}_n^*}\in \mathcal{L}(\mathbf{Z}_n^*, E_n)$ with the matrix representation $\mathcal{B}^*_n$ in the basis  of $\mathbf{Z}^*_n$ and $E_n$.

Furthermore, there exists a positive constant $C_B>0$ such that 
\begin{equation}\label{equniformbddcntrl}
\|\pi_n\mathcal{B}\|_{\mathcal{L}(E_n, \mathbf{Z}_n)}\le C_B, \quad \|(\pi_n\mathcal{B})^*\|_{\mathcal{L}(\mathbf{Z}_n^*, E_n)}\le C_B, \forall\, n\in \mathbb{N}\cup\{0\}.
\end{equation}
\end{lem}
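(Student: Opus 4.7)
The plan is to reduce everything to a direct computation using $\mathcal{B}g = (g,0,0)^{\top}$, the cosine orthogonality on $(0,\pi)$, and the biorthonormality from \cref{biorthonormality}.

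For \eqref{pi_nf=pi_nf_n}, I would start from \eqref{def-pi} and the first component $\cos(nx)/\psi_{n,\ell}$ of $\xi^{*}_{n,\ell}$ in \eqref{equ-xi_n^*} to obtain, for $n\geq 1$,
\begin{equation*}
\langle \mathcal{B}g,\xi^{*}_{n,\ell}\rangle_{\mathcal{Z}} = \frac{b}{\bar{\psi}_{n,\ell}}\int_{0}^{\pi} g(x)\cos(nx)\,\rd x.
\end{equation*}
Decomposing $g = \sum_{m\geq 0}g_{m}$ with $g_{m}\in E_{m}$ and using $\int_{0}^{\pi}\cos(mx)\cos(nx)\,\rd x = 0$ for $m\neq n$, only the $m = n$ term survives, which is \eqref{pi_nf=pi_nf_n}. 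Taking the basis vector $g_{n} = \sqrt{2/\pi}\cos(nx)$ gives $\langle \mathcal{B}g_{n},\xi^{*}_{n,\ell}\rangle_{\mathcal{Z}} = b\sqrt{\pi/2}/\bar{\psi}_{n,\ell}$, which is exactly the $\ell$-th entry of $\mathcal{B}_{n}$ in \eqref{B_n}. The case $n=0$ is identical, using $\xi^{*}_{0}$ from \eqref{equ-xi_0^*}, and yields $\mathcal{B}_{0}=\sqrt{b}$.

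For the adjoint claim, the relation $\langle \mathcal{B}f,z\rangle_{\mathcal{Z}} = \langle f,\mathcal{B}^{*}z\rangle_{L^{2}}$ with the inner product \eqref{innerproduct} immediately gives $\mathcal{B}^{*}(z_{1},z_{2},z_{3})^{\top}=bz_{1}$. A short calculation using \eqref{def-pi} and \eqref{def-adj-pi} shows that the Hilbert-space adjoint of $\pi_{n}$ on $\mathcal{Z}_{m}$ is $\pi^{*}_{n}$ (the two projections are dual through the biorthogonal bases), so $(\pi_{n}\mathcal{B})^{*} = \mathcal{B}^{*}\pi^{*}_{n}$. For $z\in \mathbf{Z}^{*}_{n}$, $\pi^{*}_{n}z = z$, and $\mathcal{B}^{*}\xi^{*}_{n,\ell} = b\cos(nx)/\psi_{n,\ell}\in E_{n}$; expressing this in the basis $\{\sqrt{2/\pi}\cos(nx)\}$ of $E_{n}$ produces $\mathcal{B}^{*}_{n} = b\sqrt{\pi/2}\,(1/\psi_{n,1},\,1/\psi_{n,2},\,1/\psi_{n,3})$, the conjugate transpose of $\mathcal{B}_{n}$ as expected from the biorthogonality.

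The uniform estimates \eqref{equniformbddcntrl} reduce to proving $\inf_{n,\ell}|\psi_{n,\ell}|>0$. For each $\ell\in\{1,2,3\}$, \eqref{cgs-theta_nl} gives that $|\psi_{n,\ell}|$ converges to a strictly positive limit (visibly nonzero from $b,\mu,\kappa,\rho_{s}>0$), and assumption \eqref{simpleeigen} ensures $\psi_{n,\ell}\neq 0$ for every $n$, so the finitely many pre-asymptotic values are also bounded away from zero. Combining this with the Riesz-basis upper bound in \eqref{equ-est-z_0} (applied to $\pi_{n}\mathcal{B}g\in \mathbf{Z}_{n}$, whose coefficients against $\xi^{*}_{k,j}$ vanish for $k\neq n$ by biorthonormality) yields, for any $g\in E_{n}$,
\begin{equation*}
\|\pi_{n}\mathcal{B}g\|_{\mathcal{Z}}^{2}\leq C_{2}\sum_{\ell=1}^{3}\left|\langle \mathcal{B}g,\xi^{*}_{n,\ell}\rangle_{\mathcal{Z}}\right|^{2} \leq 3C_{2}b^{2}\tfrac{\pi}{2}\Big(\sup_{n,\ell}\tfrac{1}{|\psi_{n,\ell}|^{2}}\Big)\|g\|_{L^{2}(0,\pi)}^{2},
\end{equation*}
and the bound on $\|(\pi_{n}\mathcal{B})^{*}\|$ follows either by duality or by inspecting $\mathcal{B}^{*}_{n}$ directly. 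The only step requiring more than a mechanical check is this uniform lower bound on $|\psi_{n,\ell}|$; everything else is bookkeeping.
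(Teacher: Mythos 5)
Your proposal is correct and follows essentially the same route as the paper: compute $\langle \mathcal{B}g,\xi^{*}_{n,\ell}\rangle_{\mathcal{Z}}$ using the first component of $\xi^{*}_{n,\ell}$ and the weight $b$ in \eqref{innerproduct}, invoke cosine orthogonality to isolate the $E_n$-component, read off $\mathcal{B}_n$ from $g_n=\sqrt{2/\pi}\cos(nx)$, and control $\|\pi_n\mathcal{B}\|$ uniformly via the convergence in \eqref{cgs-theta_nl} together with the nonvanishing of $\psi_{n,\ell}$ guaranteed by \eqref{simpleeigen}. The only places you expand beyond the paper's terse writeup are the explicit verification that the Hilbert adjoint of $\pi_n$ is $\pi^{*}_{n}$ and the reduction of the uniform bound to $\inf_{n,\ell}|\psi_{n,\ell}|>0$, both of which are exactly the bookkeeping the paper implicitly relies on.
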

	\begin{proof}
Since $\mathcal{B}g\in\mathcal{Z}_m$, from \eqref{def-pi}, we have
\begin{equation}\label{eqexx}
\pi_n\mathcal{B}g = \sum_{l=1}^{3}\left\langle \begin{pmatrix}
				g\\0\\0
\end{pmatrix}, \xi_{n,l}^*\right\rangle_{\mathcal{Z}_m}\xi_{n,l},  \quad  \forall\, n\in \mathbb{N}, \quad 
\pi_0\mathcal{B}g = \left\langle \begin{pmatrix}g\\0\\0 \end{pmatrix}, \xi_{0}^*\right\rangle_{\mathcal{Z}_m}\xi_{0}. 
\end{equation}
Now, using \eqref{f=f_n} and the fact that 
\begin{equation*}
			\left\langle \begin{pmatrix}
				g_k\\0\\0
			\end{pmatrix}, \xi_{n,l}^*\right\rangle_{\mathcal{Z}_m}= 0\text{ for }k\neq n,\quad \left\langle \begin{pmatrix}
				g_k\\0\\0
			\end{pmatrix}, \xi_{0}^*\right\rangle_{\mathcal{Z}_m}= 0\text{ for }k\neq 0,
		\end{equation*}
from \eqref{eqexx}, \eqref{pi_nf=pi_nf_n} follows.

Now note that $\pi_n\mathcal{B}|_{E_n}\in \mathcal{L}(E_n, \mathbf{Z}_n)$ and for all $x\in (0,\pi)$, $g_n(x)=\sqrt{\frac{2}{\pi}}\cos (nx)$, for $n\in \mathbb{N}$ and 
$g_0(x)=\sqrt{\frac{1}{\pi}}$, we get 
$$
\pi_n\mathcal{B}g_n =b\sqrt{\frac{\pi}{2}}\sum\limits_{l=1}^{3}\frac{1}{{\bar\psi_{n,l}}}\xi_{n,l}, \text{ for all} \, n\geq 1, 
	\quad 
\pi_0\mathcal{B}g_0 = \sqrt{b}\xi_0.
$$
Thus, the representation $\mathcal{B}_n$ for all $n\in \mathbb{N}\cup \{0\}$ holds.  Using \eqref{cgs-theta_nl} and above representation, \eqref{equniformbddcntrl} can be proved. 
\end{proof}

For each $n\in \mathbb{N}\cup\{0\}$, projecting \eqref{opp-eqn} on $\mathbf{Z}_n$ and considering control from $E_n$,  we obtain the finite dimensional system 
	\begin{equation} \label{opp-eqn_proj}
		\dot{z}_n(t) = \pi_n\mathcal{A} z_n(t) + \pi_n\mathcal{B}f_n(t), \quad t\in (0,T), \qquad z_n(0) =z_{0,n},
	\end{equation}
	where $\dot{z}_n(t)=\pi_n\dot{z}(t), \, z_n(t)=\pi_n z(t), \, z_{0,n}=\pi_n z_0$.

\section{ Null controllability}\label{null controllability}\label{secnull}
	The goal of this section is to prove \cref{thm_pos}. 
For this, we first prove the null controllability of each finite dimensional projected system \eqref{opp-eqn_proj} and using this, the null controllability of the system \eqref{opp-eqn} is obtained.

Let us recall that \eqref{opp-eqn_proj} is posed on $\mathbf{Z}_n$ with control from $E_n$, where  $\mathbf{Z}_n$ and $E_n$ are finite dimensional spaces defined in \eqref{def-Zn} and \eqref{def-E_n} respectively. Recall from \cref{lem-An} and \cref{lem-Bn}, the matrix representation of $\pi_n\mathcal{A}|_{\mathbf{Z}_n}$ and $\pi_n\mathcal{B}|_{E_n}$ is given by $\mathcal{A}_n$ and $\mathcal{B}_n$, respectively. To determine the controllability of \eqref{opp-eqn_proj}, we check the \textquotedblleft Hautus Lemma\textquotedblright (see \cite[Theorem 2.5, Section 2, Chapter 1, Part I]{bensoussan2007representation}) for the pair $(\mathcal{A}_n, \mathcal{B}_n)$. 
	\begin{thm}\label{th_finitecontrol}
	For each $n\in\mathbb{N}\cup\{0\}$, the finite dimensional system \eqref{opp-eqn_proj} is controllable at any given $T>0$.
	For each $n\in\mathbb{N}\cup\{0\}$, the control $f_n\in L^2(0,T; E_n)$ which brings the solution of \eqref{opp-eqn_proj}  at rest at a given time $T>0$ is given by 
	\begin{equation}\label{equ-mincontrol}
			f_n(t)= - \mathcal{B}^*_ne^{(T-t)\mathcal{A}_n^*}W^{-1}_{n,T}e^{T\mathcal{A}_n} z_{0,n},\qquad t\in[0,T],
		\end{equation}
		where $W_{n,T}\in \mathcal{L}(\mathbf{Z}^*_n, \mathbf{Z}_n)$ is the controllability operator given by
 \begin{equation}\label{eqgram}
 W_{n,T}= \int_{0}^{T}e^{t\pi_n\mathcal{A}}\Big(\pi_n\mathcal{B}\Big)\Big(\pi_n\mathcal{B}\Big)^*e^{t(\pi_n\mathcal{A})^*}\:\rd t. 
\end{equation}
	\end{thm}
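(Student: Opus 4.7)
The plan is to reduce the controllability of the projected system \eqref{opp-eqn_proj} to a purely finite-dimensional linear-algebraic check via the Hautus criterion, and then to write down the minimum $L^2$-norm steering control via the associated controllability Gramian. After identification through the bases given by \cref{lem-An} and \cref{lem-Bn}, \eqref{opp-eqn_proj} becomes the classical controlled ODE $\dot z_n = \mathcal{A}_n z_n + \mathcal{B}_n f_n$ on $\mathbb{C}^3$ (respectively $\mathbb{C}$ for $n=0$), with $\mathcal{A}_n$ and $\mathcal{B}_n$ as in \cref{lem-An}, \cref{lem-Bn}.

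For $n=0$ the system reduces to $\dot z_0 = \sqrt{b}\, f_0$, which is trivially controllable. For $n\ge 1$, I would apply the Hautus test: it suffices to show that $\mathcal{B}_n^* \eta \neq 0$ for every eigenvector $\eta$ of $\mathcal{A}_n^*$. Under assumption \eqref{simpleeigen} combined with \cref{spectrumofA}(d)--(e), the three eigenvalues $\lambda_n^1,\lambda_n^2,\lambda_n^3$ are distinct, so $\mathcal{A}_n = \operatorname{diag}(\lambda_n^1,\lambda_n^2,\lambda_n^3)$ diagonalizes with eigenvectors of $\mathcal{A}_n^*$ being (up to scalars) the standard basis $\{e_1,e_2,e_3\}$. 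From the explicit form \eqref{B_n}, $\mathcal{B}_n^* e_\ell = b\sqrt{\pi/2}\, \psi_{n,\ell}^{-1}$, which is nonzero precisely because \eqref{simpleeigen} guarantees $\psi_{n,\ell}\neq 0$ (see \eqref{equ-psi_nl}). Hence Hautus is verified and $(\mathcal{A}_n,\mathcal{B}_n)$ is controllable at every $T>0$.

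Once controllability is established, the Gramian $W_{n,T}$ in \eqref{eqgram} is a self-adjoint positive definite operator on $\mathbf{Z}_n$, hence invertible. To verify the explicit formula \eqref{equ-mincontrol}, I would substitute it into the Duhamel representation
\begin{equation*}
z_n(T) = e^{T\mathcal{A}_n} z_{0,n} + \int_0^T e^{(T-s)\mathcal{A}_n}\mathcal{B}_n f_n(s)\,\rd s,
\end{equation*}
and use the change of variable $t=T-s$ to recognize the integral $\int_0^T e^{(T-s)\mathcal{A}_n}\mathcal{B}_n\mathcal{B}_n^* e^{(T-s)\mathcal{A}_n^*}\,\rd s$ as $W_{n,T}$ itself; cancellation then gives $z_n(T)=0$. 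The minimum-norm character of this choice is the standard consequence of the Hilbert uniqueness method applied in finite dimension.

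No serious obstacle is expected in this theorem: everything rests on a $3\times 3$ (or scalar) linear system and is pure finite-dimensional control theory, with the spectral simplicity \eqref{simpleeigen} doing all the work. The genuinely delicate issue—obtaining bounds on $\|W_{n,T}^{-1}\|$ that are uniform in $n$, needed to assemble the family $\{f_n\}$ into a single $L^2$-control proving \cref{thm_pos}—lies outside this statement and is where the uniform estimates \eqref{equniformfinsemigr}--\eqref{equniformbddcntrl} together with the asymptotics \eqref{cgs-theta_nl} and \eqref{est-lambda_n2R} will be needed in the subsequent step.
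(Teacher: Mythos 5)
Your proposal is correct and follows essentially the same route as the paper: both invoke the Hautus criterion for the diagonal pair $(\mathcal{A}_n,\mathcal{B}_n)$ (the paper verifies the rank condition on $[\lambda I - \mathcal{A}_n\; ; \; \mathcal{B}_n]$ directly, while you use the equivalent left-eigenvector form, which amounts to the same check that the diagonal structure and $\psi_{n,\ell}\neq 0$ guarantee) and then appeal to the standard Gramian formula for the steering control. The two phrasings of Hautus are interchangeable here, and your Duhamel substitution is exactly what the paper's citation of Zabczyk encapsulates.
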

\begin{proof}
	Let $n\in \mathbb{N}$ be fixed. We prove this result showing $(iii)$ of \cite[Theorem 2.5, Section 2, Chapter 1, Part I]{bensoussan2007representation}). We show that for $n\in \N$
	the $3\times 3$ matrix $\mathcal{A}_n$ and $3\times 1$ matrix $\mathcal{B}_n$ satisfies 
	\begin{align*}
		\text{Rank}\left[\lambda I-\mathcal{A}_n \;;\; \mathcal{B}_n \right]=3,\text{ for all $\lambda\in\mathbb{C}$ that are eigenvalues of }\mathcal{A}_n, 
	\end{align*}
	and for $n=0$, the row vector $\left[\lambda_0 I-\mathcal{A}_0 \;;\; \mathcal{B}_0 \right]$ has rank $1$. 
	
	From direct calculation, we have 
	\begin{equation}
		\left[\lambda^\ell I-\mathcal{A}_n \;;\; \mathcal{B}_n \right]=\begin{bmatrix}
			\lambda^\ell-\lambda_n^1&0&0&b\sqrt{\pi/2}\frac{1}{\bar\psi_{n,1}}\\
			0&\lambda^\ell-\lambda_n^2&0&b\sqrt{\pi/2}\frac{1}{\bar\psi_{n,2}}\\
			0&0&\lambda^\ell-\lambda_n^3&b\sqrt{\pi/2}\frac{1}{\bar\psi_{n,3}}
		\end{bmatrix},
	\end{equation}
 where the eigenvalues of $\mathcal{A}_n$ are $\lambda_n^\ell, \: \ell=1,2,3.$ In particular, for $\lambda^\ell=\lambda_n^1$, note that the $3\times 4$ matrix has $3$ independent columns and thus Rank$\left[\lambda_n^1 I-\mathcal{A}_n \;;\; \mathcal{B}_n \right]= 3$. Similarly we can see that Rank$\left[\lambda_n^\ell I-\mathcal{A}_n \;;\; \mathcal{B}_n \right]= 3$, for $\ell=2,3$. 
For $n=0$, $\left[\lambda_0 I-\mathcal{A}_0 \;;\; \mathcal{B}_0 \right]= [0, \sqrt{b}]$, which implies Rank$\left[\lambda_0 I-\mathcal{A}_n \;;\; \mathcal{B}_n \right]= 1$. 
Therefore for each $n\in\mathbb{N}\cup \{0\}$, the system \eqref{opp-eqn_proj} is controllable at any time $T$.

Since $(\pi_n\mathcal{A}, \pi_n\mathcal{B})$ is controllable, from \cite[Part I, Chapter 1, Proposition 1.1]{zabczyk2008mathematical}, it follows that the inverse of the controllability operator $W_{n,T}^{-1}$ exists  in $\mathcal{L}(\mathbf{Z}_n, \mathbf{Z}^*_n)$ and the control $f_n$ given in  \eqref{equ-mincontrol} is well-defined and it gives the null controllability of \eqref{opp-eqn_proj} at time $T$. 
\end{proof}

In the next lemma, we derive the uniform (in $n$) estimate of the controllability operator $W_{n,T}$ and hence the control $f_n$. 

\begin{lem}\label{lem-cntrlfin-est}
For each $n\in \mathbb{N}\cup \{0\}$, let $W_{n,T}$ and $f_n\in L^2(0,T; E_n)$ be as defined in \eqref{eqgram} and \eqref{equ-mincontrol}, respectively. 
Then, there exist positive constants $C_1$ and $C_2$ independent of $n$, such that 
\begin{equation}\label{bddWnT}
 C_1\le \|W_{n,T}\|_{\mathcal{L}(\mathbf{Z}^*_n, \mathbf{Z}_n)}\le C_2, \quad \forall\, n\in \mathbb{N}\cup\{0\},
 \end{equation}
and 
\begin{equation}\label{estimatef_n}
	\int_{0}^{T}\left\| f_n(t)\right\|^2_{L^2(0,\pi)}\:\rd t \leq C \left\|z_{0,n} \right\|^2_{\mathcal{Z}},
		\end{equation}
for some positive constant $C$ independent of $n$.
\end{lem}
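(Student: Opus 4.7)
\medskip

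\noindent\textbf{Proof proposal.} The plan is to treat the two claims in sequence: first the two-sided estimate on the Gramian $W_{n,T}$, which is the substantive step, and then the control estimate as a direct consequence of the explicit formula \eqref{equ-mincontrol}. Throughout we work with the matrix representations of $\pi_n\mathcal{A}$ and $\pi_n\mathcal{B}$ in the eigenbases, using that the norm equivalences between $\mathbf{Z}_n$ (resp.\ $\mathbf{Z}^*_n$) equipped with the $\mathcal{Z}$-norm and $\mathbb{C}^3$ are \emph{uniform in $n$} by the Riesz basis bounds \eqref{equ-est-z_0}--\eqref{equ-est-adj-z_0}.

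\emph{Upper bound on $\|W_{n,T}\|$.} This is immediate from \eqref{eqgram} combined with the uniform operator-norm estimates \eqref{equniformfinsemigr} and \eqref{equniformbddcntrl}: for any $\eta \in \mathbf{Z}^*_n$,
\[
\|W_{n,T}\eta\|_{\mathbf{Z}_n} \leq \int_0^T \|e^{t\pi_n\mathcal{A}}\|\, \|\pi_n\mathcal{B}\|\, \|(\pi_n\mathcal{B})^*\|\, \|e^{t(\pi_n\mathcal{A})^*}\|\, \|\eta\|\, \rd t \leq T C^2 C_B^2\, \|\eta\|,
\]
independently of $n$.

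\emph{Lower bound and uniform invertibility.} Using the diagonal form of $\mathcal{A}_n$ from \cref{lem-An} and the explicit vector $\mathcal{B}_n$ from \cref{lem-Bn}, the matrix of $W_{n,T}$ in the Riesz bases is
\[
(W_{n,T})_{ij} \;=\; \frac{b^2\pi}{2\,\bar\psi_{n,i}\,\psi_{n,j}} \cdot \frac{e^{(\lambda_n^i + \overline{\lambda_n^j})T} - 1}{\lambda_n^i + \overline{\lambda_n^j}}, \qquad i,j\in\{1,2,3\}.
\]
By \cref{spectrumofA}(b)--(c), $\lambda_n^1 \to w_0 \in \mathbb{R}$ while $\lambda_n^{2,3}$ have real parts converging and imaginary parts of order $n$ (and they are complex conjugates for large $n$). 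Combined with \eqref{cgs-theta_nl}, this yields (i) each diagonal entry $(W_{n,T})_{ii}$ converges to a strictly positive constant, and (ii) each off-diagonal entry $(W_{n,T})_{ij}$ with $i\neq j$ has a denominator of size $\sim n$ and a bounded numerator, hence is $O(1/n)$. Therefore for $n \geq N_0$ large enough, $W_{n,T}$ is an $O(1/n)$ perturbation of a positive-definite diagonal matrix whose entries are bounded below by a fixed positive constant, and a standard Gershgorin/perturbation argument gives $\|W_{n,T}^{-1}\| \leq C$. For the finitely many remaining indices $0\leq n < N_0$, invertibility of $W_{n,T}$ is guaranteed by \cref{th_finitecontrol}, and taking the maximum of $\|W_{n,T}^{-1}\|$ over this finite set closes the uniform bound. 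The lower bound $\|W_{n,T}\| \geq C_1$ then follows from the lower bound on the diagonal entries $(W_{n,T})_{ii}$.

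\emph{Control estimate.} Insert the uniform bounds into \eqref{equ-mincontrol}:
\[
\|f_n(t)\|_{L^2(0,\pi)} \leq \|\mathcal{B}_n^*\|\, \|e^{(T-t)\mathcal{A}_n^*}\|\, \|W_{n,T}^{-1}\|\, \|e^{T\mathcal{A}_n}\|\, \|z_{0,n}\|_{\mathcal{Z}} \leq C\,\|z_{0,n}\|_{\mathcal{Z}},
\]
using \eqref{equniformfinsemigr}, \eqref{equniformbddcntrl} and Step 2. Squaring and integrating over $[0,T]$ gives \eqref{estimatef_n}.

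\emph{Main obstacle.} The delicate point is the uniform-in-$n$ lower bound on the smallest eigenvalue of $W_{n,T}$. It rests entirely on the hyperbolic nature of the spectrum -- the fact that $\lambda_n^{2,3}$ have diverging imaginary parts -- which causes the off-diagonal entries of $W_{n,T}$ to decay in $n$ and asymptotically decouples the three spectral modes. Without this decoupling the Gramian could degenerate as $n\to\infty$ and the control cost would blow up, so this is exactly where the spectral picture of \cref{spectrumofA} is being used in an essential way.
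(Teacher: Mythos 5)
Your proof is correct and follows essentially the same route as the paper: write out the matrix of $W_{n,T}$ in the Riesz bases, use the spectral asymptotics of \cref{spectrumofA}(c) together with \eqref{cgs-theta_nl} to see that the diagonal entries converge to strictly positive constants while the off-diagonals vanish, and then read off the uniform two-sided bounds and invertibility. The one place where you are somewhat more explicit than the paper is the uniform bound on $\|W_{n,T}^{-1}\|$: the paper states only \eqref{bddWnT} (via Frobenius-norm equivalence applied to \eqref{estWnii}--\eqref{estWnij}) and then asserts \eqref{estimatef_n}, which strictly speaking requires the convergence of $W_{n,T}$ to a positive definite diagonal matrix rather than just a lower bound on $\|W_{n,T}\|$; your Gershgorin/perturbation step plus the finite exceptional set handled by \cref{th_finitecontrol} supplies exactly this missing detail, so your write-up is arguably a cleaner version of the same argument.
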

\begin{proof}
Let us observe that the matrix representation of the controllability operator $W_{n,T}\in \mathcal{L}(\mathbf{Z}^*_n, \mathbf{Z}_n)$
(still denoted by same notation $W_{n,T}$) in the basis $\{\xi^*_{n,\ell}\mid \ell=1,2,3 \}$ of $\mathbf{Z}^*_n$ and $\{\xi_{n,\ell} \mid \ell=1,2,3 \}$ of $\mathbf{Z}_n$ is given by 
\begin{equation}\label{matWnT}
				W_{n,T}=b^2\pi/2\begin{pmatrix}
					\frac{e^{2T\lambda_n^1}-1}{2\lambda_n^1\left| \psi_{n,1}\right|^2 } &\frac{e^{T\left(\lambda_n^1+\bar{\lambda}_n^2 \right) }-1}{\left(\lambda_n^1+\bar{\lambda}_n^2 \right) \bar{\psi}_{n,1}\psi_{n,2} } & \frac{e^{T\left(\lambda_n^1+\bar{\lambda}_n^3 \right) }-1}{\left(\lambda_n^1+\bar{\lambda}_n^3 \right) \bar{\psi}_{n,1}\psi_{n,3} } \\
					\frac{e^{T\left(\lambda_n^1+{\lambda}_n^2 \right) }-1}{\left(\lambda_n^1+{\lambda}_n^2 \right) \bar{\psi}_{n,2}\psi_{n,1} } &\frac{e^{2T\text{ Re}\lambda_n^2}-1}{2\text{ Re}\lambda_n^2\left| \psi_{n,2}\right|^2 }&\frac{e^{T\left(\lambda_n^2+\bar{\lambda}_n^3 \right) }-1}{\left(\lambda_n^2+\bar{\lambda}_n^3 \right) \bar{\psi}_{n,2}\psi_{n,3} } \\
					\frac{e^{T\left(\lambda_n^1+{\lambda}_n^3 \right) }-1}{\left(\lambda_n^1+{\lambda}_n^3 \right) \bar{\psi}_{n,3}\psi_{n,1} } &\frac{e^{T\left(\lambda_n^3+\bar{\lambda}_n^2 \right) }-1}{\left(\lambda_n^3+\bar{\lambda}_n^2 \right) \bar{\psi}_{n,3}\psi_{n,2} } &\frac{e^{2T\text{ Re}\lambda_n^3}-1}{2\text{ Re}\lambda_n^3\left| \psi_{n,3}\right|^2 }
				\end{pmatrix},
			\end{equation}
where $W_{n,T}^{ij}$, for $i, j\in \{1,2,3\}$, is the $(ij)$th element of the above matrix \eqref{matWnT} satisfying 
			\begin{align}
				&\left| W_{n,T}^{11}\right|\rightarrow \frac{\mu\left( 1-e^{2Tw_0}\right) }{\pi b^2\rho^2_s},\quad  \left| W_{n,T}^{22}\right|, \left| W_{n,T}^{33}\right|\rightarrow \frac{\kappa^2\rho_s^2\left( 1-e^{-T\left( w_0+1/\kappa\right) }\right) }{\pi \mu},\label{estWnii}\\
				&\text{ and }\left| W_{n,T}^{ij}\right|\rightarrow 0,\quad\text{ for }i\neq j,\: i,j\in \{1,2,3\},\label{estWnij} \quad\text{as }n\rightarrow \infty,
			\end{align}
due to \cref{spectrumofA}$(c)$ and \eqref{cgs-theta_nl}. 
Noting that 
		$$ M_1 \Big(\sum_{i=1}^3\sum_{j=1}^3|W_{n,T}^{ij}|^2\Big)^{\frac{1}{2}} \le \left\|W_{n,T} \right\|_{\mathcal{L}\left(\mathbf{Z}^*_n,\mathbf{Z}_n \right)} \le M_2\Big(\sum_{i=1}^3\sum_{j=1}^3|W_{n,T}^{ij}|^2\Big)^{\frac{1}{2}}, \quad \forall\, n\in \mathbb{N},$$
for some positive constants $M_1$ and $M_2$ independent of $n$, from \eqref{estWnii}-\eqref{estWnij}, \eqref{bddWnT} follows. 

From \eqref{equniformfinsemigr}, \eqref{equniformbddcntrl}, \eqref{equ-mincontrol} and \eqref{bddWnT}, \eqref{estimatef_n} follows. 
\end{proof}

Now we are in a position to prove \cref{thm_pos}.\\
\textbf{Proof of \cref{thm_pos} : }
To prove \cref{thm_pos}, we show that for any $T>0$ and for any $z_0\in \mathcal{Z}_m$, there exists a control $f\in L^2(0,T; L^2(0,\pi))$ such that $z$, the solution of \eqref{opp-eqn} satisfies $z(T)=0$ in $\mathcal{Z}_m$. 

Let $z_{0} \in  {\mathcal{Z}_m}$. Then for each $n\in \mathbb{N}\cup \{0\}$, the control $f_n\in L^2(0,T; E_n)$ defined in \eqref{equ-mincontrol} brings the solution of the finite dimensional system \eqref{opp-eqn_proj} with initial condition $z_{0,n}$ at rest at time $T$ and furthermore, the control $f_n$ satisfies \eqref{estimatef_n}. 
	
Let us set $f=\sum\limits_{n=0}^{\infty}f_n$. Using \eqref{estimatef_n} and \eqref{equ-est-z_0}, we get $f\in L^2(0,T; L^2(0,\pi))$ with 
	\begin{align*}
		\sum_{n=0}^{\infty}\left\|  f_n\right\|^2_{L^2(0,T;L^2(0,\pi))} \leq C\sum_{n=0}^{\infty}\left\|z_{0,n} \right\|^2_{\mathcal{Z}}
		\le C \left(\left| \left\langle z_0, \xi^*_{0} \right\rangle\right|^2+ \sum_{n=1}^{\infty}\sum_{l=1}^{3}\left| \left\langle z_0, \xi^*_{n,l} \right\rangle\right|^2 \right)
		\leq C\left\|z_0 \right\|_{\mathcal{Z}_m}^2,
	\end{align*}
	for some generic positive constant $C$.
With this control $f$ and the initial condition $z_0$, \eqref{opp-eqn} has a unique solution $z\in C([0,T]; \mathcal{Z}_m)$ and $z(t)$ can be decomposed as 
$z(t)=\sum_{n=0}^\infty z_n(t)$, where $z_n(t)\in \mathbf{Z}_n$ satisfies \eqref{opp-eqn_proj}. Now, from \cref{th_finitecontrol}, it follows that $z(T)=0$ in $\mathcal{Z}_m$. 
\qed

	\begin{rem}
		Let us consider the
		linearized Navier-Stokes system of a viscous, compressible, isothermal barotropic fluid in a bounded domain $(0,\pi)$ around a constatnt steady state $(Q_0, 0)$ with $Q_0>0$ as in \cite{chowdhury2012controllability},
		\begin{equation}\label{CNSE1}
			\begin{dcases}
				\partial_t\rho+{Q_0}\partial_xu= f,&\text{ in }(0, T)\times (0, \pi),\\
				\partial_tu-\frac{\mu}{Q_0}\partial_{xx}u+a\gamma{Q_0}^{\gamma-2}\partial_x\rho=0,&\text{ in }(0, T)\times (0, \pi),\\
				\rho(0)=\rho_0,\quad u(0)=u_0, & \text{ in } (0,\pi),\\
				u(t,0)=0,\quad u(t,\pi)=0,& \text{ in } (0, T).\\
			\end{dcases}
		\end{equation}
		Then using a similar approach as for the proof of \cref{thm_pos}, we can show the following result for the system \eqref{CNSE1}.
		
		$\bullet$	Let us assume $\left( \rho_0, u_0\right)\in L^{2}(0,\pi) \times L^{2}(0,\pi)$. Then for any $T>0$, there exists a control $f\in L^2\left(0,T ; L^2(0,\pi) \right) $ acting everywhere in the density equation, such that $(\rho,u)$, the corresponding solution to \eqref{CNSE1}, belongs
		to $C\left( [0,T];  L^{2}(0,\pi) \times L^{2}(0,\pi)\right) $ and satisfies
		\begin{equation}
			\rho(T,x)= u(T,x)=0\text{ for all }x\in (0,\pi).
		\end{equation}
	\end{rem}


\section{ Unique continuation and approximate controllability}\label{Approximate controllability}\label{secapprox}
In this section we study the approximate controllability of the system \eqref{eq2}-\eqref{bdd-ini}. The approximate controllability of a linear system is equivalent to showing the unique continuation of the solution of the corresponding adjoint system.	

We recall that ${\mathcal Z_{m}}= L^2(0, \pi)\times L^2(0, \pi)\times L_m^2(0, \pi)$.
Let $f_2=0=f_3$ in \eqref{eq2}. Then the control operator $\mathcal{B} \in \mathcal{L}(L^2(0,\pi);{\mathcal Z_{m}})$ is given by 
\begin{equation}\label{equcontr}
	\mathcal{B} f_1  = \left( \mathbbm{1_{\mathcal{O}_1}} f_1, 0,0 \right)^{\top}, \qquad f_1 \in L^2(0,\pi),
\end{equation}
and $\mathcal{B}^*$, the adjoint of $\mathcal{B}$, belongs to $\mathcal{L}({\mathcal Z_{m}};L^2(0,\pi))$ given by 
\begin{equation}\label{def_B^*}
	\mathcal{B}^*\Phi=b\mathbbm{1_{\mathcal{O}_1}}\phi_1,\quad\Phi=( \phi_1, \phi_2, \phi_3)^\top \in{\mathcal Z_{m}}.
\end{equation}

Let us recall that  $(\mathcal{A}^*, \mathcal{D}(\mathcal{A}^*; \mathcal{Z}_m))$ defined in \eqref{dom-A*}-\eqref{op-A*} generates the $C^{0}$-semigroup $\mathbb{T}^*$ on 
$\mathcal{Z}_m$. It is known that the approximate controllability of a pair $(\mathcal{A}, \mathcal{B})$ at time $T$ is equivalent to the unique continuation property that if 
$(\sigma^0, v^0, \tilde{S}^0)\in \mathcal{Z}_m$ and $\mathcal{B}^*\mathbb{T}^*_{t}(\sigma^0, v^0, \tilde{S}^0)^\top=0$ for almost all $t\in [0,T]$, then 
$(\sigma^0, v^0, \tilde{S}^0)^\top=(0,0,0)^\top$. 
For details see \cite[Chapter 2, Theorem 2.43]{coron2007control}.

In view of \eqref{def_B^*} and the above equivalence conditions, \cref{main_approximate} is equivalent to the following result.
\begin{thm}\label{thm_appequivucp}
	Let $T >\frac{4\pi}{\sqrt{b\rho_s+\frac{\mu}{\kappa\rho_s}}}$. For any $(\sigma^{0}, v^{0}, \tilde{S}^0 ) \in \mathcal{Z}_m,$ let
	\begin{equation}\label{eq-exp-adj-semi}
		(\sigma(t), v(t), \tilde{S}(t))^\top = \mathbb{T}_{t}^{*} (\sigma^{0}, v^{0}, \tilde{S}^0)^\top \qquad (t \geqslant 0), \quad \forall\, t\in (0,T),
	\end{equation}
where $\mathbb{T}^*$  is the $C^{0}$-semigroup  generated by $(\mathcal{A}^*, \mathcal{D}(\mathcal{A}^*; \mathcal{Z}_m))$ defined in \eqref{dom-A*}-\eqref{op-A*}, on 
$\mathcal{Z}_m$.
If $$\sigma=0 \quad \mathrm{in}\quad (0,T)\times \mathcal{O}_1,$$ 
then $\left(\sigma^0,v^0,\tilde S^0 \right)^{\top}=(0,0,0)^{\top}$ in $(0,\pi)$.
\end{thm}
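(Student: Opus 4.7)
The plan is to expand $\mathbb{T}^*_t z^0$ in the Riesz basis of adjoint eigenfunctions from \cref{basis_representation}, reduce the hypothesis $\sigma\equiv 0$ on $(0,T)\times \mathcal{O}_1$ to the vanishing of a scalar exponential series in $t$ for each fixed $x\in \mathcal{O}_1$, and then combine an Ingham-type inequality with a Laplace-transform argument to isolate each coefficient. Writing $z^0 := (\sigma^0,v^0,\tilde S^0)^\top = \alpha_0\xi_0^*+\sum_{n,\ell}\alpha_{n,\ell}\xi_{n,\ell}^*$ with $\alpha_0 = \langle z^0,\xi_0\rangle_{\mathcal{Z}}$ and $\alpha_{n,\ell} = \langle z^0,\xi_{n,\ell}\rangle_{\mathcal{Z}}$, and using $\mathcal{A}^*\xi^*_{n,\ell}=\overline{\lambda_n^\ell}\,\xi^*_{n,\ell}$ together with the explicit formula \eqref{equ-xi_n^*} for $\xi^*_{n,\ell}$, the first component of \eqref{eq-exp-adj-semi} becomes
\[
\sigma(t,x) = \frac{\alpha_0}{\sqrt{b\pi}} + \sum_{n=1}^{\infty}\sum_{\ell=1}^{3}\frac{\alpha_{n,\ell}}{\psi_{n,\ell}}\,e^{\overline{\lambda_n^\ell}\,t}\cos(nx),
\]
and the hypothesis translates, for each fixed $x\in\mathcal{O}_1$, into the vanishing on $(0,T)$ of this exponential sum in $t$.

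Next, I would split the sum along the spectral trichotomy of \cref{spectrumofA}: the real branch $\{\lambda_n^1\}_{n\ge 1}\to w_0$, which has no uniform gap, and the two conjugate branches $\{\lambda_n^2\},\{\lambda_n^3\}$ whose imaginary parts are $\pm n\sqrt{b\rho_s+\mu/(\kappa\rho_s)}+\mathcal{O}(1/n)$. Applying the Ingham-type inequality for complex modes with converging real parts established in \cref{secappen} to the two asymptotically hyperbolic sequences on $(0,T)$ --- the threshold $T>4\pi/\sqrt{b\rho_s+\mu/(\kappa\rho_s)}$ accommodates both as a single interlaced family of frequencies $\pm n\gamma$ with $\gamma=\sqrt{b\rho_s+\mu/(\kappa\rho_s)}$ --- produces an $L^2(0,T)$ lower bound on the $\ell^2$-norm of the fast coefficients. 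Combined with the vanishing of the whole series, this allows the Laplace transform in $t$ of $\sigma(\cdot,x)$ to be analytically extended to a suitable right half-plane as a meromorphic function of $s$ whose only (simple) singularities are at $s=0$ and $s=\overline{\lambda_n^\ell}$, $n\geq 1$, $\ell\in\{1,2,3\}$.

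Extracting residues at each of these isolated poles via Cauchy's integral formula --- possible because \cref{spectrumofA}(c)--(e) guarantees simplicity and asymptotic separation of the eigenvalues, and because each $\lambda_n^1$ lies strictly between $-1/\kappa$ and $0$ --- yields $\alpha_0 = 0$ and $\alpha_{n,\ell}\cos(nx)/\psi_{n,\ell}=0$ for every $n\geq 1$, $\ell\in\{1,2,3\}$ and every $x\in\mathcal{O}_1$. Since $\cos(nx)$ is nonzero on a dense subset of the open set $\mathcal{O}_1$ and $\psi_{n,\ell}\neq 0$ by \eqref{equ-psi_nl} and the assumption \eqref{simpleeigen}, one concludes $\alpha_{n,\ell}=0$ for all $n,\ell$, so $z^0=0$ in $\mathcal{Z}_m$. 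The main technical obstacle I foresee is making the Laplace/residue step rigorous around the accumulation point $w_0$ of the slow branch $\{\lambda_n^1\}$: because no Ingham inequality can be applied uniformly near $w_0$, the slow tail must be handled through a separate contour argument exploiting that $\{\lambda_n^1\}$ clusters at a single real number $w_0$ while the hyperbolic branches cluster (in real part) at $-\tfrac{1}{2}(w_0+1/\kappa)\neq w_0$ --- essentially the strategy of \cite{renardy2005viscoelastic}, with the modifications demanded by our density control and by the vanishing of the zeroth-order density coefficient in \eqref{eq2}.
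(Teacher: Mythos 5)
Your high-level roadmap — Riesz-basis expansion, spectral split into a slow real branch and two fast conjugate branches, an Ingham-type inequality, and then a Laplace/residue extraction of the coefficients — matches the paper's strategy. But there is a genuine gap in the middle, and the technical difficulty you single out is not the one that actually bites.

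The gap: you treat the Ingham inequality as if it directly produces a Laplace transform of $\sigma$ with simple isolated poles at $0$ and $\lambda_n^\ell$. It does not, because $\sigma$ is a priori only defined on $(0,T)$. The finite-time Laplace transform $\int_0^T e^{-\tau t}\sum c_n e^{\lambda_n^\ell t}\,\rd t = \sum c_n\frac{1-e^{(\lambda_n^\ell-\tau)T}}{\tau-\lambda_n^\ell}$ is entire in $\tau$ — the would-be pole at $\tau=\lambda_n^\ell$ is cancelled by the numerator — so residue extraction fails. What the paper actually uses the Ingham inequality for is to dominate the time derivatives: $\sigma_h=-\sigma_p$ on $(0,T)\times\mathcal{O}_1$ plus the Ingham observability estimate over $\mathcal{O}_1$ gives $\|\frac{d^r\sigma_h}{dt^r}(t,\cdot)\|_{L^2(0,\pi)}\le C_3 C_4^r$ uniformly in $t\in(0,T)$, i.e.\ a fixed Taylor radius of convergence. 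This bootstrap is what yields an analytic extension (in $t$) of $\sigma_h$ — and hence of $\sigma$ — to all of $(0,\infty)$, still vanishing on $\mathcal{O}_1$ (Lemmas \ref{lemparabolicpart}--\ref{lemextwhole}). Only after this time-domain extension does the Laplace transform over $(0,\infty)$ acquire the required meromorphic structure $\sum\frac{c_n}{\tau-\lambda_n^\ell}+\frac{c_0}{\tau}$, and only then does the Cauchy/residue argument close. Your sketch skips this bridge and assigns Ingham the wrong role.

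On the "main technical obstacle" you anticipate at $w_0$: this is in fact unproblematic once the extension is in hand. The slow branch $\{\lambda_n^1\}$ is absolutely summable (the coefficients are $\ell^2$ and the real parts are bounded away from $0$), so $\sigma_p$ extends to $\{\mathrm{Re}\,z>0\}$ directly by its series (no Ingham needed there), and the extended Laplace transform converges uniformly on compacts of $\C\setminus(\{0\}\cup\{\lambda_n^\ell\})$; since each $\lambda_m^\ell\ne w_0$ by \cref{spectrumofA}$(d)$, a small contour around $\lambda_m^\ell$ encloses only that pole, and no special handling of the cluster at $w_0$ is required. Finally, two smaller points: the paper pairs $\widetilde\sigma(t,\cdot)$ against a fixed test function $\psi_m$ supported in $\mathcal{O}_1$ rather than running the Laplace argument pointwise in $x$; and the series $\sum\alpha_{n,\ell}$ is only $\ell^2$, so your pointwise-in-$x$ reduction needs care (it would hold a.e.\ in $x$, which is enough, but the test-function route avoids the issue entirely).
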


We prove the above theorem using the series expansion of $(\sigma^{0}, v^{0}, \tilde{S}^0 ) \in \mathcal{Z}_m$. Recall that, for any 
$(\sigma^{0}, v^{0}, \tilde{S}^0 ) \in \mathcal{Z}_m,$ from \cref{basis_representation}, we have
\begin{equation}\label{est_alphanl}
	\begin{pmatrix}
		\sigma^{0}\\v^{0}\\\tilde{S}^{0}
	\end{pmatrix}=\sum\limits_{n=1}^{\infty}\sum\limits_{l=1}^{3}\alpha_{n,l} \xi^*_{n,l} + \alpha_0\xi_0^*, \quad \mathrm{with} \quad 
	\sum\limits_{n=1}^{\infty}\sum\limits_{l=1}^{3}\left| \alpha_{n,l}\right|^2 + \left|\alpha_0 \right|^2  < \infty .
\end{equation}
Then $(\sigma(t), v(t), \tilde{S}(t))^\top$ defined in \eqref{eq-exp-adj-semi} can be written as
\begin{equation}\label{repressoln}
	\begin{pmatrix}
		\sigma\\v\\\tilde{S}
	\end{pmatrix}(t,x)=\sum\limits_{n=1}^{\infty}\sum\limits_{l=1}^{3}\alpha_{n,l}e^{\lambda_n^lt} \xi^*_{n,l}(x) + \alpha_0e^{\lambda_0 t}\xi_0^*(x).
\end{equation}
In particular, using the expression of $\xi^*_0$ and $\xi^*_{n,l}$ ( eq. \eqref{equ-xi_0^*}} and \eqref{equ-xi_n^*}), from \eqref{repressoln} we get
\begin{equation}\label{expansion_v}
	\sigma(t,x)=\sum\limits_{n=1}^{\infty}\sum\limits_{l=1}^{3}\frac{\alpha_{n,l}}{\psi_{n,l}}e^{\lambda_n^lt}\cos(nx) + \frac{\alpha_0}{\sqrt{b\pi}}, \quad \forall\, t\in (0,T), \quad x\in (0,\pi).
\end{equation}

If $\sigma=0$ in $(0,T)\times \mathcal{O}_1$, we show that 
\begin{equation}\label{alphanl0}
	\alpha_0=0\text{ and }\alpha_{n,l}=0 \text{ for all } n\in\mathbb{N} \text{ and } l=1,2,3,
\end{equation}
and thus from \eqref{est_alphanl}, \cref{thm_appequivucp} follows.

To show \eqref{alphanl0}, we need an analytic extension of the mapping $t\mapsto \sigma(t,\cdot)\in L^2(0,\pi)$ from $(0,T)$  to $(0,\infty)$. 
To do it, from \cref{spectrumofA}, first recall that for all $n\in \mathbb{N}$, $\lambda_n^1$ is a negative real number satisfying $\displaystyle \lim_{n\rightarrow \infty} \lambda^1_n=\omega_0<0$. In view of \eqref{est-lambda_n2R}, a large enough $N_0\in \mathbb{N}$ can be chosen such that for all $n\ge N_0$, $\lambda_n^2$, $\lambda_n^3$ are complex conjugate satisfying 
\begin{equation}\label{epsilondelta}
\begin{array}{l}
	\lambda_n^3 =\overline{\lambda_n^2}, \quad 
	\lambda_n^2=	-\frac{1}{2}\left( w_0+\frac{1}{\kappa}\right)+\epsilon_n + i\left(  n\sqrt{(b\rho_s+\frac{\mu}{\kappa\rho_s})}+ \delta_n\right), \quad \forall\, n\ge N_0,\\
	\text{where}\quad \epsilon_n\rightarrow 0\text{ and } \delta_n\rightarrow  0,\text{ as }n\rightarrow \infty, \quad |\delta_n|\le \frac{\sqrt{(b\rho_s+\frac{\mu}{\kappa\rho_s})}}{2}, \quad \forall\, n\ge N_0. 
\end{array}
\end{equation}
From \cref{spectrumofA} it follows that for $1\le n\le N_0-1$, both $\lambda_n^2$, $\lambda_n^3$ can be negative real numbers or both can be complex with negative real part.

Now we split $\sigma$ in the following way
\begin{equation}\label{split_v}
	\sigma(t,x)=\sigma_h(t,x)+\sigma_p(t,x), \quad \forall\, t\in (0,T), \quad x\in (0,\pi),
\end{equation}
where for all $t\in (0,T)$ and $x\in (0,\pi)$, 
\begin{align}
	\sigma_h(t,x)=&\sum\limits_{n=M}^{\infty}\sum\limits_{l=2}^{3}\frac{\alpha_{n,l}}{\psi_{n,l}}e^{{\lambda_n^l}t}\cos(nx),\label{def_v}\\
	\sigma_p(t,x)=&\sum\limits_{n=1}^{M-1}\sum\limits_{l=1}^{3}\frac{\alpha_{n,l}}{\psi_{n,l}}e^{\lambda_n^lt}\cos(nx) + \frac{\alpha_0}{\sqrt{b\pi}}
	+\sum\limits_{n=M}^{\infty}\frac{\alpha_{n,1}}{\psi_{n,1}}e^{\lambda_n^1t}\cos(nx),\label{def_v2}
\end{align}
where $M>N_0$ is chosen suitably large enough. 

\begin{lem}\label{lemparabolicpart}
Let $\sigma_p$ be defined as \eqref{def_v2}. We set the extension of $\sigma_p$ for all $z\in \mathbb{C}, \, \mathrm{Re}\, z>0$ with the representation formula
for all $z\in \C, \, \mathrm{Re}\, z>0$,
\begin{equation}\label{extenv2}
\widetilde{\sigma}_p(z,x)=\sum\limits_{n=1}^{M-1}\sum\limits_{l=1}^{3}\frac{\alpha_{n,l}}{\psi_{n,l}}e^{\lambda_n^l z}\cos(nx) + \frac{\alpha_0}{\sqrt{b\pi}}
	+\sum\limits_{n=M}^{\infty}\frac{\alpha_{n,1}}{\psi_{n,1}}e^{\lambda_n^1 z}\cos(nx), \quad x\in (0,\pi).
\end{equation}	
Then, the mapping $z\mapsto \widetilde{\sigma}_p(z, \cdot)\in L^2(0,\pi)$ is analytic on $\{z\in \mathbb{C}\mid \mathrm{Re}\, z>0\}$ and 
$\widetilde{\sigma}_p=\sigma_p$ on $(0,T)\times (0,\pi)$. 

Furthermore, there exist positive constants $C_1$ and $C_2$, independent of $t$ and $r$ such that 
\begin{equation}\label{bddv_2}
	\left\| \frac{d^r \sigma_p}{d t^r}(t,\cdot)\right\|_{L^2(0,\pi)}\leq C_1(C_2)^r\quad \text{for all}\quad t\in (0,T). 
\end{equation}
\end{lem}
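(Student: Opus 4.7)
The plan is to split $\sigma_p$ into a finite-mode part $P$ and an infinite $\lambda_n^1$-tail $Q$ and to handle each as an $L^2(0,\pi)$-valued analytic function of $z$ on $\{\mathrm{Re}\,z>0\}$. Explicitly, set
\[
P(t,x) := \sum_{n=1}^{M-1}\sum_{l=1}^{3}\frac{\alpha_{n,l}}{\psi_{n,l}}e^{\lambda_n^l t}\cos(nx) + \frac{\alpha_0}{\sqrt{b\pi}}, \qquad Q(t,x) := \sum_{n=M}^{\infty}\frac{\alpha_{n,1}}{\psi_{n,1}}e^{\lambda_n^1 t}\cos(nx),
\]
so that $\sigma_p = P + Q$, and extend each formula by replacing $t$ with $z\in\mathbb{C}$, $\mathrm{Re}\,z>0$. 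The finite sum $P$ is trivially entire as a finite linear combination of entire $L^2(0,\pi)$-valued maps; moreover, since every eigenvalue satisfies $\mathrm{Re}\,\lambda_n^l<0$ by \cref{spectrumofA}(b), one has $|e^{\lambda_n^l z}|\le 1$ for $\mathrm{Re}\,z\ge 0$. With $\Lambda := \max\{|\lambda_n^l| : 1\le n\le M-1,\ 1\le l\le 3\}$, termwise differentiation immediately yields $\|\partial_t^r P(t,\cdot)\|_{L^2(0,\pi)}\le C_P\,\Lambda^r$ uniformly in $t>0$ and $r\ge 0$.

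For the tail $Q$, I would exploit the asymptotics in \eqref{cgs-theta_nl}, which give $|\psi_{n,1}|\to\sqrt{\tfrac{\pi}{2}(b+\tfrac{\kappa b^2\rho_s^2}{\mu})}>0$; hence (enlarging $M$ if necessary) there is $c>0$ with $|\psi_{n,1}|\ge c$ for every $n\ge M$. Combining this with $\lambda_n^1\in(-1/\kappa,0)$ from \cref{spectrumofA}(a) and the orthogonality of $\{\cos(n\cdot)\}_{n\ge 1}$ yields, for every $z$ with $\mathrm{Re}\,z>0$,
\[
\|Q(z,\cdot)\|_{L^2(0,\pi)}^2 \;=\; \frac{\pi}{2}\sum_{n=M}^{\infty}\frac{|\alpha_{n,1}|^2}{|\psi_{n,1}|^2}\,e^{2\lambda_n^1\,\mathrm{Re}\,z} \;\le\; \frac{\pi}{2c^2}\sum_{n=M}^{\infty}|\alpha_{n,1}|^2 ,
\]
which is finite thanks to the $\ell^2$-summability of $(\alpha_{n,1})$ recorded in \eqref{est_alphanl}. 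Since the estimate is uniform on compact subsets of $\{\mathrm{Re}\,z>0\}$ and every partial sum is entire, the Banach-space-valued Weierstrass theorem (applied to the $L^2(0,\pi)$-valued holomorphic partial sums) gives analyticity of the limit $\widetilde{Q}$ on $\{\mathrm{Re}\,z>0\}$, and clearly $\widetilde{Q}=Q$ on $(0,\infty)$; adding $P$ proves analyticity of $\widetilde{\sigma}_p$ and the identity $\widetilde{\sigma}_p=\sigma_p$ on $(0,T)\times(0,\pi)$.

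To reach \eqref{bddv_2} I would differentiate $Q$ term by term, which the preceding Parseval-type bound also justifies once $|\lambda_n^1|<1/\kappa$ is used to absorb the extra factor, to get
\[
\|\partial_t^r Q(t,\cdot)\|_{L^2(0,\pi)}^2 \;=\; \frac{\pi}{2}\sum_{n=M}^{\infty}\frac{|\alpha_{n,1}|^2|\lambda_n^1|^{2r}}{|\psi_{n,1}|^2}\,e^{2\lambda_n^1 t} \;\le\; \frac{\pi}{2c^2}\Bigl(\tfrac{1}{\kappa}\Bigr)^{2r}\sum_{n=M}^{\infty}|\alpha_{n,1}|^2 .
\]
Combining with the bound on $P$ and choosing $C_2 := \max(\Lambda,\,1/\kappa)$ together with $C_1$ sufficiently large delivers the desired uniform estimate. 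The only genuinely delicate point is the vector-valued analyticity of the tail $Q$; once the strict negativity and uniform boundedness of $\lambda_n^1$, the uniform lower bound on $|\psi_{n,1}|$ from \cref{biorthonormality}, and the $\ell^2$ summability of the Fourier-type coefficients are all in hand, the argument collapses to the standard uniform-convergence principle for Banach-valued holomorphic series.
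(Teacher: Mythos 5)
Your argument is correct and follows essentially the same route as the paper: the boundedness and strict negativity of $\lambda_n^1$, the uniform lower bound on $|\psi_{n,1}|$ from \eqref{cgs-theta_nl}, the $\ell^2$-summability in \eqref{est_alphanl}, and orthogonality of $\{\cos(n\cdot)\}$ (Parseval) give uniform convergence of the $L^2(0,\pi)$-valued series on the right half-plane, hence analyticity by the Weierstrass theorem for Banach-valued holomorphic functions, and the same ingredients yield \eqref{bddv_2}. Only one harmless slip: for complex $\lambda_n^l$ and complex $z$, $|e^{\lambda_n^l z}|$ need not be $\le 1$ on $\{\mathrm{Re}\,z\ge 0\}$, since $\mathrm{Re}(\lambda_n^l z)=\mathrm{Re}\,\lambda_n^l\,\mathrm{Re}\,z-\mathrm{Im}\,\lambda_n^l\,\mathrm{Im}\,z$, but this does not matter because you invoke the bound only for real $t\in(0,T)$, where it is valid, and the analyticity of the finite sum $P$ requires no growth estimate at all.
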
 
\begin{proof}
Using $(a), (b), (c)$ of \cref{spectrumofA}, \eqref{cgs-theta_nl} and \eqref{est_alphanl}, we have the series in \eqref{extenv2} is absolutely convergent in $L^2(0,\pi)$-norm for all $z\in \mathbb{C}$ with $\mathrm{Re}\, z>0$ and hence $\widetilde{\sigma}_p$ in \eqref{extenv2} is well-defined. In fact, it can be shown that the series  is uniformly convergent in $L^2(0,\pi)$-norm on any compact subset of $\{z\in \mathbb{C}\mid \mathrm{Re}\, z>0\}$.
Using this result and the fact that $e^{\lambda_n^1 z} \left(n\ge 1\right)$ and $e^{\lambda_n^\ell z} \left( l=2,3, 1\leq n \leq M-1\right) $ are entire functions, 
we conclude that the mapping $z\mapsto \widetilde{\sigma}_p(z,\cdot)\in L^2(0,\pi)$ is analytic on $\{z\in \mathbb{C}\mid \mathrm{Re}\, z>0\}$. From representation, it follows 
$\widetilde{\sigma}_p=\sigma_p$ on $(0,T)\times (0,\pi)$. 

From \eqref{def_v2}, by a direct estimate, \eqref{bddv_2} follows. 
\end{proof}

Before showing the extension of $\sigma_h$ for all $t\in (0,\infty)$, we need the following estimate. 

\begin{lem}\label{Ingham}
Let $T> \frac{4\pi}{\sqrt{(b\rho_s+\frac{\mu}{\kappa\rho_s})}}$. Let $\sigma_h$ and $\sigma_p$ be as defined in \eqref{def_v} and \eqref{def_v2} such that $\sigma^h=-\sigma_p$ on $(0,T)\times \mathcal{O}_1$, where $\sigma_p$ satisfies \eqref{bddv_2}. Then there exists a positive constant $C$, depending on $T$, such that 
for all $r\in \mathbb{N}\cup \{0\}$,
\begin{equation}\label{bddsigma1}
	\left\|\frac{d^r \sigma_h}{dt^r}(t,\cdot) \right\|_{L^2(0,\pi)} \leq C \left\|\frac{d^r \sigma_h}{dt^r} \right\|_{L^2\left( 0,T; L^2(\mathcal{O}_1)\right) }\quad \text{for all} \quad t\in (0,T).
\end{equation}
\end{lem}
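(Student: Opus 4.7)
\textbf{Proof plan for \cref{Ingham}.}
The function $\sigma_h$ is built only from the ``hyperbolic'' modes $\lambda_n^2,\lambda_n^3$ ($n\geq M$), which by \eqref{epsilondelta} come in complex conjugate pairs with bounded real parts and with imaginary parts $\pm(n\sqrt{b\rho_s+\mu/(\kappa\rho_s)}+\delta_n)$ where $\delta_n\to 0$. The strategy is to differentiate the series term by term, apply an Ingham-type inequality in the time variable pointwise in $x\in\mathcal{O}_1$, then integrate in $x$ and compare with the $L^2(0,\pi)$-norm using the orthogonality of $\{\cos(nx)\}$ on $(0,\pi)$ and the uniform boundedness of $|e^{\lambda_n^l t}|$ on $[0,T]$.

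First I would write
\begin{equation*}
\frac{d^r\sigma_h}{dt^r}(t,x)=\sum_{n\geq M}\sum_{l=2}^{3} c_{n,l,r}\,e^{\lambda_n^l t}\cos(nx),\qquad c_{n,l,r}=\frac{\alpha_{n,l}(\lambda_n^l)^r}{\psi_{n,l}},
\end{equation*}
and upper bound the left-hand side by $L^2(0,\pi)$-orthogonality of $\cos(nx)$, together with $|e^{\lambda_n^l t}|\leq K(T)$ (since $\mathrm{Re}\,\lambda_n^l$ is uniformly bounded), to obtain
\begin{equation*}
\Big\|\tfrac{d^r\sigma_h}{dt^r}(t,\cdot)\Big\|^2_{L^2(0,\pi)}\leq C_1(T)\sum_{n\geq M}\sum_{l=2}^{3}|c_{n,l,r}|^2,\qquad t\in(0,T).
\end{equation*}
Next I would freeze $x\in\mathcal{O}_1$ and apply the Ingham inequality of \cref{secappen} to the scalar function $t\mapsto \frac{d^r\sigma_h}{dt^r}(t,x)=\sum_{n,l} c_{n,l,r}\cos(nx)\,e^{\lambda_n^l t}$. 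Enlarging $M$ if necessary, the gap condition $|B_{n+1}-B_n|\geq \tfrac{1}{2}\sqrt{b\rho_s+\mu/(\kappa\rho_s)}$ on the imaginary parts holds (because $\delta_n\to 0$), the real parts are uniformly bounded, and the hypothesis $T>4\pi/\sqrt{b\rho_s+\mu/(\kappa\rho_s)}$ matches the Ingham threshold $T>2\pi/\gamma$ with $\gamma=\tfrac{1}{2}\sqrt{\cdot}$. The key point is that the Ingham constant depends only on the frequencies, so it is independent of $x$ and of $r$; integrating in $x$ gives
\begin{equation*}
\Big\|\tfrac{d^r\sigma_h}{dt^r}\Big\|^2_{L^2(0,T;L^2(\mathcal{O}_1))}\;\geq\; C_2\sum_{n\geq M}\sum_{l=2}^{3}|c_{n,l,r}|^2\,\int_{\mathcal{O}_1}|\cos(nx)|^2\,\rd x.
\end{equation*}

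Combining the two displays will then yield \eqref{bddsigma1} provided $\int_{\mathcal{O}_1}|\cos(nx)|^2\,\rd x\geq c_0>0$ uniformly in $n\geq M$. This is the step I expect to be slightly delicate, but it is not really hard: since $\int_{\mathcal{O}_1}|\cos(nx)|^2\,\rd x=\tfrac{|\mathcal{O}_1|}{2}+\tfrac{1}{2}\int_{\mathcal{O}_1}\cos(2nx)\,\rd x$ and the second term tends to $0$ by the Riemann--Lebesgue lemma, one may fix $M$ so large that this quantity is bounded below by $|\mathcal{O}_1|/4$ for all $n\geq M$. The genuine technical obstacle is the Ingham step itself, i.e.\ obtaining an inequality with a constant uniform in $r$ and in the small perturbations $\epsilon_n,\delta_n$; this is precisely the content of the appendix \cref{secappen}, adapted from \cite{renardy2005viscoelastic} to the complex modes of our operator, and once that is invoked the estimate follows as above.
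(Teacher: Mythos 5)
Your plan is correct and follows essentially the same route as the paper: write $\sigma_h$ as an exponential sum $\sum_{|n|\ge M}\beta_n(x)e^{i\mu_n t}$ with $\mu_n$ as in \eqref{defnmun}, apply the Ingham inequality of \cref{propI-4} pointwise in $x$, integrate over $\mathcal{O}_1$ using the uniform lower bound $\inf_{|n|\ge M}\int_{\mathcal{O}_1}\cos^2(nx)\,\rd x>0$, and then pass to the $L^2(0,\pi)$-norm through the orthogonality of $\{\cos(nx)\}$ and $\mathrm{Re}\,\lambda_n^\ell<0$. Your Riemann--Lebesgue justification of the lower bound is exactly the point the paper asserts without comment, and you correctly observe that the Ingham constant is automatically independent of $r$ because the frequencies $\mu_n$ do not change under $t$-differentiation.

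There is, however, one genuine gap, and it is slightly different from the one you flag at the end. You take for granted that $\frac{d^r\sigma_h}{dt^r}(t,x)$ coincides with the termwise-differentiated series $\sum_{n,l}c_{n,l,r}\,e^{\lambda_n^lt}\cos(nx)$ before applying Ingham, but for $r\ge 1$ the coefficients $c_{n,l,r}=\alpha_{n,l}(\lambda_n^l)^r/\psi_{n,l}$ grow like $n^r|\alpha_{n,l}|$, so the series is not a priori $\ell^2$ and the interchange of $\partial_t^r$ with the sum is not free. This is precisely where the hypothesis $\sigma_h=-\sigma_p$ on $(0,T)\times\mathcal{O}_1$ together with \eqref{bddv_2} must be invoked: it guarantees that $\frac{d^r\sigma_h}{dt^r}$ exists in $L^2(\mathcal{O}_1)$ with $\left\|\frac{d^r\sigma_h}{dt^r}(t,\cdot)\right\|_{L^2(\mathcal{O}_1)}\le C_1C_2^r$, and only after the Ingham step on $\mathcal{O}_1$ do you learn that $\sum_{n,l}|c_{n,l,r}|^2$ is finite so that the $L^2(0,\pi)$-bound is meaningful. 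Your closing remark attributes the delicacy to obtaining an Ingham constant ``uniform in $r$,'' but that uniformity is automatic; the delicate point is the legitimacy and boundedness of $\frac{d^r\sigma_h}{dt^r}$ on $\mathcal{O}_1$, which is exactly what the hypothesis supplies. With that step made explicit, your argument and the paper's coincide.
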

\begin{proof}
We prove this lemma using the series expansion of $\sigma_h$ given in \eqref{def_v}.
Recall for all $n\ge M$, $\lambda_n^2$ and $\lambda_n^3$ satisfy \eqref{epsilondelta}. Setting 
\begin{align}\label{defbetan}
	\beta_n(x)=\begin{cases}
		\frac{\alpha_{n,2}}{\psi_{n,2}}\cos(nx) \: n\geq M,\\
		\frac{\alpha_{-n,3}}{\psi_{-n,3}}\cos(-nx), \: n\leq -M,
	\end{cases}
	\forall\, x\in (0,\pi),
\end{align}
and 
\begin{align}\label{defnmun}
	\mu_n=\begin{cases}
		-i\lambda^2_n,  \: n\geq M\\
		-i\lambda^3_{-n},  \: n\leq -M,
	\end{cases}
\end{align}
from \eqref{def_v}  we have
\begin{equation}\label{sigma1beta}
	\sigma_h(t,x)=\sum\limits_{\left| n\right| \geq M}\beta_{n}(x)e^{i\mu_n t},\quad t\in (0,T),\: x\in (0,\pi),
\end{equation}
where for each $x\in (0,\pi)$, $\sum_{|n|\ge M}|\beta_n(x)|^2<\infty$ due to \eqref{est_alphanl} and \eqref{cgs-theta_nl}. From \eqref{epsilondelta}, the gap condition holds:
\begin{equation}\label{gapcondn}
	\mathrm{Re}\,\mu_{n+1}-\mathrm{Re}\,\mu_n\geq \frac{\sqrt{(b\rho_s+\frac{\mu}{\kappa\rho_s})}}{2}:=\gamma > 0,\quad\text{ for }\left| n\right| \geq M.
\end{equation}
Thus, for large enough $M$, using an Ingham type inequality (see \cref{propI-4} in the appendix), we get that for any $T>\frac{4\pi}{\sqrt{(b\rho_s+\frac{\mu}{\kappa\rho_s})}}$, there exists a positive constant $C$ depending on $T$ such that 
\begin{align}
	C\sum\limits_{\left| n\right| \geq M}\left| \beta_n(x)\right|^2\leq \int_{0}^{T}\left|\sum\limits_{\left| n\right| \geq M}\beta_{n}(x)e^{i\mu_n t} \right|^2 \rd t. 
\end{align}
Now integrating the both side over $\mathcal{O}_1$ and noting $\inf\limits_{\left|n \right|\geq M }\int_{\mathcal{O}_1}\cos^2(nx) \rd x >0$, we get 
\begin{align}\label{obsin1}
	\sum\limits_{n\geq M} \sum\limits_{\ell=2}^3\Big|\frac{\alpha_{n,\ell}}{\psi_{n,\ell}}\Big|^2 \leq C \int_{0}^{T}\int_{\mathcal{O}_1}|\sigma_h(t,x)|^2 \rd x \rd t. 
\end{align}
Using the orthogonality of $\{\cos(nx) \mid n\ge M, \quad x\in (0,\pi)\}$ in $L^2(0,\pi)$, the fact that $\mathrm{Re}\, \lambda_n^\ell<0$ for all $n\ge M$ and $\ell=2,3$ and \eqref{cgs-theta_nl}, from \eqref{def_v}, \eqref{bddsigma1} follows for $r=0$. 

For any $r\in \mathbb{N}$, \eqref{bddsigma1} can be proved in an analogous way noting that $\frac{d^r\sigma_h}{dt^r}(t, \cdot)$ exists in $L^2(\mathcal{O}_1)$ with the estimate 
$\Big\|\frac{d^r\sigma_h}{dt^r}(t, \cdot)\Big\|_{L^2(\mathcal{O}_1)}\le C_1C_2^r$ for all $t\in (0,T)$, 
because of the fact $\sigma_h=-\sigma_p$ on $(0,T)\times \mathcal{O}_1$, and \eqref{bddv_2}.
\end{proof}

\begin{lem}\label{lemhyperpart}
Let $\sigma$ be given by \eqref{split_v} where $\sigma_h$ and $\sigma_p$ are defined in \eqref{def_v} and \eqref{def_v2}, respectively. Let $T> \frac{4\pi}{\sqrt{(b\rho_s+\frac{\mu}{\kappa\rho_s})}}$. Let us assume 
$\sigma=0$ on $(0,T)\times \mathcal{O}_1$. Then, there exists a unique analytic function (in $t$) $\widetilde{\sigma}_h(t,\cdot)\in L^2(0,\pi)$, for all $t\in (0,\infty)$ such that $\widetilde{\sigma}_h(t,x)=\sigma_h(t,x)$ for all $t\in (0,T)$ and $x\in (0,\pi)$. 

Moreover,  $\widetilde{\sigma}_h$ has the representation formula
$$ \widetilde{\sigma}_h(t,x)=\sum\limits_{n=M}^{\infty}\sum\limits_{l=2}^{3}\frac{\alpha_{n,l}}{\psi_{n,l}}e^{{\lambda_n^l}t}\cos(nx), \quad \forall\, t\in (0,\infty), \quad x\in (0,\pi).$$
\end{lem}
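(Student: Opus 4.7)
The plan is to derive Cauchy-type derivative estimates $\|d^r \sigma_h/dt^r(t,\cdot)\|_{L^2(0,\pi)} \leq K_1 K_2^r$ uniformly in $t \in (0,\infty)$, from which analyticity on $(0,\infty)$ (as a Banach-space-valued map) and uniqueness will follow by Taylor expansion in $L^2(0,\pi)$ and the identity theorem. First I would define $\widetilde{\sigma}_h(t,x)$ for $t \geq 0$ by the series formula in the statement; this series converges in $L^2(0,\pi)$ for every $t \geq 0$ by orthogonality of $\{\cos(nx)\}$, $\ell^2$-summability of $\{\alpha_{n,l}\}$ from \eqref{est_alphanl}, boundedness of $|\psi_{n,l}|$ from below via \eqref{cgs-theta_nl}, and $|e^{\lambda_n^l t}| \leq 1$ since $\mathrm{Re}(\lambda_n^l) < 0$ by part~(b) of \cref{spectrumofA}. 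By \eqref{def_v}, $\widetilde{\sigma}_h = \sigma_h$ on $(0,T) \times (0,\pi)$.

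To obtain the Cauchy estimate on $(0,T)$, I would use the hypothesis $\sigma = 0$ on $(0,T) \times \mathcal{O}_1$ (so that $\frac{d^r \sigma_h}{dt^r} = -\frac{d^r \sigma_p}{dt^r}$ there for every $r$), together with \eqref{bddv_2} and \eqref{bddsigma1}, to get
\[
\sup_{t \in (0,T)} \left\|\frac{d^r \sigma_h}{dt^r}(t,\cdot)\right\|_{L^2(0,\pi)} \leq C \left\|\frac{d^r \sigma_p}{dt^r}\right\|_{L^2(0,T;L^2(\mathcal{O}_1))} \leq K_1 K_2^r,
\]
with constants independent of $r$ and $t$. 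The crucial step is propagating this bound from $(0,T)$ to $(0,\infty)$: termwise differentiation of the defining series together with Parseval's identity gives
\[
\left\|\frac{d^r \widetilde{\sigma}_h}{dt^r}(t,\cdot)\right\|_{L^2(0,\pi)}^2 = \frac{\pi}{2} \sum_{n \geq M} \sum_{l=2}^{3} \left|\frac{\alpha_{n,l}}{\psi_{n,l}}\right|^2 |\lambda_n^l|^{2r} e^{2\mathrm{Re}(\lambda_n^l) t},
\]
which is non-increasing in $t$ because every $\mathrm{Re}(\lambda_n^l)$ is strictly negative. Fixing $t^* \in (0,T)$ and invoking the previous bound at $t = t^*$ yields $\|d^r \widetilde{\sigma}_h/dt^r(t,\cdot)\|_{L^2(0,\pi)} \leq K_1 K_2^r$ for every $t \geq t^*$; combined with the bound on $(0,T)$, this gives the Cauchy estimate uniformly on $(0,\infty)$.

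From the uniform Cauchy estimate, Taylor's theorem with remainder in $L^2(0,\pi)$ shows that $\widetilde{\sigma}_h$ extends analytically to a complex strip around $(0,\infty)$ of width at least $1/K_2$, so $\widetilde{\sigma}_h$ is analytic in $t$ as a map into $L^2(0,\pi)$ in the sense required. Uniqueness of the extension is immediate from the identity theorem for Banach-space-valued analytic functions: any two such extensions of $\sigma_h$ from $(0,T)$ differ by an analytic function vanishing on a set with accumulation points inside the connected domain, and must therefore vanish throughout. The principal technical delicacy I anticipate is justifying the termwise differentiation of the series used in the key step, but this is legitimate because the $(0,T)$ estimate already guarantees that the series of $r$th-order derivatives converges in $L^2(0,\pi)$ uniformly on $(0,T)$, and the monotonicity of the exponentials then transfers this uniform convergence to every $t \geq t^*$.
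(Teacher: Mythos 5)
Your overall route — define $\widetilde{\sigma}_h$ directly by the series, establish a Cauchy estimate $\|d^r\widetilde{\sigma}_h/dt^r(t,\cdot)\|_{L^2(0,\pi)}\leq K_1 K_2^r$ uniformly on $(0,\infty)$, deduce real-analyticity of the Banach-valued map, and finish with the identity theorem — is a legitimate alternative to the paper's strategy (which extends $\sigma_h$ beyond $T$ by iterating Taylor expansion and then identifies the Fourier coefficients of the extension by scalar analytic continuation). In that sense the proposal is structurally sound and arguably cleaner, since it sidesteps the explicit identification step.

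However, the key propagation step from $(0,T)$ to $(0,\infty)$ has a genuine gap. The displayed Parseval identity is false: for each fixed $n$, the two summands $l=2,3$ multiply the \emph{same} basis function $\cos(nx)$, so the correct Parseval formula is
\begin{equation*}
\Big\|\frac{d^r \widetilde{\sigma}_h}{dt^r}(t,\cdot)\Big\|_{L^2(0,\pi)}^2
= \frac{\pi}{2}\sum_{n\ge M}\Big|\,\frac{\alpha_{n,2}(\lambda_n^2)^r}{\psi_{n,2}}e^{\lambda_n^2 t}+\frac{\alpha_{n,3}(\lambda_n^3)^r}{\psi_{n,3}}e^{\lambda_n^3 t}\Big|^2,
\end{equation*}
whose cross term $2\,\mathrm{Re}\big(\frac{\alpha_{n,2}\bar\alpha_{n,3}(\lambda_n^2\bar\lambda_n^3)^r}{\psi_{n,2}\bar\psi_{n,3}}e^{2\lambda_n^2 t}\big)$ oscillates in $t$ (recall $\lambda_n^3=\bar\lambda_n^2$, so $\lambda_n^2+\overline{\lambda_n^3}=2\lambda_n^2$ has nonzero imaginary part). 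Consequently the norm-squared is \emph{not} non-increasing in $t$; a single mode $|ae^{\lambda t}-\bar a e^{\bar\lambda t}|^2 = 4|a|^2 e^{2\mathrm{Re}\,\lambda\, t}\sin^2(\mathrm{Im}\,\lambda\, t+\arg a)$ already has increasing stretches even though $\mathrm{Re}\,\lambda<0$. So you cannot "fix $t^*\in(0,T)$ and invoke the bound at $t^*$'' to conclude the same bound for $t\ge t^*$: the bound at $t^*$ controls the actual norm, not the individual coefficient moduli, and the cross terms ruin monotonicity.

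The fix is to route the propagation through the Ingham inequality at the level of the coefficients, which is exactly what makes \eqref{bddsigma1} work. Apply \eqref{obsin1} to the exponential series of $d^r\sigma_h/dt^r$ (whose coefficients are $\alpha_{n,\ell}(\lambda_n^\ell)^r/\psi_{n,\ell}$) to get
\begin{equation*}
\sum_{n\ge M}\sum_{\ell=2}^3\Big|\frac{\alpha_{n,\ell}(\lambda_n^\ell)^r}{\psi_{n,\ell}}\Big|^2
\le C\int_0^T\!\!\int_{\mathcal{O}_1}\Big|\frac{d^r\sigma_h}{dt^r}\Big|^2\,\rd x\,\rd t
\le C'\,C_1^2 C_2^{2r},
\end{equation*}
using $\sigma_h=-\sigma_p$ on $(0,T)\times\mathcal{O}_1$ and \eqref{bddv_2}. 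Then, since $|e^{\lambda_n^\ell t}|\le 1$ for $t>0$, the triangle inequality gives
\begin{equation*}
\Big\|\frac{d^r\widetilde{\sigma}_h}{dt^r}(t,\cdot)\Big\|_{L^2(0,\pi)}^2
\le \pi\sum_{n\ge M}\sum_{\ell=2}^3\Big|\frac{\alpha_{n,\ell}(\lambda_n^\ell)^r}{\psi_{n,\ell}}\Big|^2
\le \pi C' C_1^2 C_2^{2r}
\qquad\text{for all }t>0,
\end{equation*}
which is the uniform Cauchy estimate on $(0,\infty)$ you need. Alternatively, as the paper implicitly uses, the Cauchy estimate on $(0,T)$ already forces the Taylor series at any $t_0\in(0,T)$ to have infinite radius of convergence (the factorials dominate $K_2^r$), so the analytic extension to $(0,\infty)$ is immediate without propagating the bound at all — but then one does need the paper's final step to verify that the Taylor extension coincides with the series.
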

\begin{proof}
Since $\sigma=0$ on $(0,T)\times \mathcal{O}_1$, using \eqref{split_v}, \eqref{bddv_2} and Lemma \ref{Ingham}, we get 
\begin{equation}\label{boundedv_2}
	\left\| \frac{d^r \sigma_h}{d t^r}(t,\cdot)\right\|_{L^2(0,\pi)}\leq C_3(C_4)^r\quad \text{for }t\in (0,T), \quad \forall\, r\in \mathbb{N}\cup \{0\},
\end{equation}
for some positive constants $C_3$ and $C_4$ depending on $T$.
Using this estimate, it can be proved that the mapping $t\mapsto \sigma_h(t, \cdot)\in L^2(0,\pi)$ is analytic for all $t\in (0,T)$ and its Taylor expansion around any $t \in (0,T)$ is uniformly convergent on $(t-R, t+R)$, for some $R>0$ independent of $t$. 
Thus, note that for any arbitrary $\epsilon>0$, the Taylor expansion of $\sigma_h$ around $t=T-\epsilon$ is defined on $[T-\epsilon, T-\epsilon+R)$. Since $\epsilon>0$ is arbitrary, 
using this Taylor expansion of $\sigma_h$ beyond $T$,  we obtain  $\widetilde{\sigma}_h$, an analytic extension of $\sigma_h$ on $(0, T+R)$. Now repeating this argument, we get $\widetilde{\sigma}_h$, an analytic extension of $\sigma_h$ in $L^2(0,\pi)$ for all $t\in (0,\infty)$.

Since for all $t\in (0,\infty)$, $\widetilde{\sigma}^h(t, \cdot)\in L^2(0,\pi)$, we have 
$$ \widetilde{\sigma}^h(t, x)=\sum_{n=1}^\infty \widetilde{\beta}_n(t) \cos(nx)+\widetilde{\beta}_0, \quad \forall\, t\in (0,\infty), \, x\in (0,\pi),$$
where for each $n\in \mathbb{N}\cup \{0\}$, $t\mapsto \widetilde{\beta}_n(t)$ is analytic for all $t\in (0,\infty)$ because $t\mapsto \widetilde{\sigma}^h(t, \cdot)$ is analytic for all $t\in (0,\infty)$. Since $\widetilde{\sigma}^h=\sigma_h$ on $(0,T)\times (0,\pi)$, from \eqref{def_v}, it follows that 
$$ \widetilde{\beta}_n(t)=\sum\limits_{l=2}^{3}\frac{\alpha_{n,l}}{\psi_{n,l}}e^{{\lambda_n^l}t}, \quad \forall\, n\ge M, \quad
\widetilde{\beta}_n(t)=0, \quad \forall n=0, \cdots, M-1, \quad \forall\, t\in (0,T).$$
Using the uniqueness of analytic continuation \cite[Corollary 1.2.5, Chapter 1]{krantz2002primer}, we get 
$$ \widetilde{\beta}_n(t)=\sum\limits_{l=2}^{3}\frac{\alpha_{n,l}}{\psi_{n,l}}e^{{\lambda_n^l}t}, \quad \forall\, n\ge M, \quad
\widetilde{\beta}_n(t)=0, \quad \forall n=0, \cdots, M-1, \quad \forall\, t\in (0,\infty),$$
and hence the representation formula for $\widetilde{\sigma}_h$ holds. 
\end{proof}

\begin{lem}\label{lemextwhole}
Let $\sigma$ be given by \eqref{split_v} where $\sigma_h$ and $\sigma_p$ are defined in \eqref{def_v} and \eqref{def_v2}, respectively. Let $T> \frac{4\pi}{\sqrt{(b\rho_s+\frac{\mu}{\kappa\rho_s})}}$.  Let us assume 
$\sigma=0$ on $(0,T)\times \mathcal{O}_1$. 
Let $\widetilde{\sigma}_h$ and $\widetilde{\sigma}_p$ be the analytic extension (in $t$) of $\sigma_h$ and $\sigma_p$ on $(0,\infty)\times (0,\pi)$ as obtained in \cref{lemparabolicpart} and \cref{lemhyperpart}. Setting $\widetilde{\sigma}=\widetilde{\sigma}_h+\widetilde{\sigma}_p$, we have 
$$\widetilde{\sigma}=0, \quad \mathrm{on} \quad (0,\infty)\times \mathcal{O}_1.$$
\end{lem}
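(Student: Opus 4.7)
The plan is to use uniqueness of analytic continuation for Banach-space-valued analytic functions, applied to the restriction $\widetilde{\sigma}|_{\mathcal{O}_1}$ viewed as a map from $(0,\infty)$ into $L^2(\mathcal{O}_1)$. All the hard work has already been carried out in \cref{lemparabolicpart} and \cref{lemhyperpart}, which supply analytic extensions of $\sigma_p$ and $\sigma_h$, respectively, with values in $L^2(0,\pi)$; the present lemma is essentially a combination of these two extensions.

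First I would observe that by \cref{lemparabolicpart}, the map $z\mapsto \widetilde{\sigma}_p(z,\cdot)$ is analytic on the half-plane $\{\operatorname{Re} z>0\}$ with values in $L^2(0,\pi)$, and by \cref{lemhyperpart}, the map $t\mapsto \widetilde{\sigma}_h(t,\cdot)$ is analytic on $(0,\infty)$ with values in $L^2(0,\pi)$. Hence their sum $\widetilde{\sigma}:=\widetilde{\sigma}_h+\widetilde{\sigma}_p$ is an analytic $L^2(0,\pi)$-valued function of $t$ on $(0,\infty)$, and satisfies $\widetilde{\sigma}(t,\cdot)=\sigma_h(t,\cdot)+\sigma_p(t,\cdot)=\sigma(t,\cdot)$ for every $t\in (0,T)$.

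Next I would compose $\widetilde{\sigma}$ with the bounded restriction operator $R:L^2(0,\pi)\to L^2(\mathcal{O}_1)$, $R\phi=\phi|_{\mathcal{O}_1}$. Since $R$ is linear and continuous, the map
\[
\Phi:(0,\infty)\to L^2(\mathcal{O}_1),\qquad \Phi(t):=\widetilde{\sigma}(t,\cdot)\big|_{\mathcal{O}_1},
\]
inherits analyticity from $\widetilde{\sigma}$. By the hypothesis $\sigma=0$ on $(0,T)\times\mathcal{O}_1$ and the identity $\widetilde{\sigma}=\sigma$ on $(0,T)\times(0,\pi)$, we have $\Phi(t)=0$ in $L^2(\mathcal{O}_1)$ for every $t\in(0,T)$.

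To conclude, I would invoke the uniqueness of analytic continuation for Banach-space-valued analytic functions on the connected open set $(0,\infty)$: since $\Phi$ is analytic on $(0,\infty)$ and vanishes on the open subinterval $(0,T)$, it vanishes identically on $(0,\infty)$, which is precisely the claim $\widetilde{\sigma}=0$ on $(0,\infty)\times\mathcal{O}_1$. There is no real obstacle here beyond bookkeeping; the only point worth checking carefully is that analyticity in $L^2(0,\pi)$ is preserved under the continuous restriction to $\mathcal{O}_1$, which is immediate since power series expansions in a Banach space are mapped by a bounded linear operator to power series expansions in the target Banach space with the same radius of convergence.
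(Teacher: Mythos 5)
Your proposal is correct and follows essentially the same route as the paper: both form $\widetilde{\sigma}=\widetilde{\sigma}_h+\widetilde{\sigma}_p$, note it is analytic in $t$ with values in an $L^2$ space, observe it vanishes on $(0,T)\times\mathcal{O}_1$, and conclude by uniqueness of analytic continuation for Banach-space-valued functions. The only difference is cosmetic — you explicitly introduce the bounded restriction operator to $L^2(\mathcal{O}_1)$ and justify that it preserves analyticity, whereas the paper phrases the same step as $\widetilde{\sigma}_h=-\widetilde{\sigma}_p$ on $(0,\infty)\times\mathcal{O}_1$ following from $\sigma_h=-\sigma_p$ on $(0,T)\times\mathcal{O}_1$; the underlying uniqueness argument is identical.
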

\begin{proof}
Using the uniqueness of analytic continuation \cite[Corollary 1.2.5, Chapter 1]{krantz2002primer}, since $\widetilde{\sigma}_h(t, \cdot)$ and $\widetilde{\sigma}_p(t, \cdot)$ are analytic continuation (in $t$) of $\sigma_h(t, \cdot)$ and $\sigma_p(t, \cdot)$  on $(0,\infty)$ as functions with value in $L^2(0,\pi)$, 
the condition $\sigma_h=-\sigma_p, \quad \mathrm{on} \quad (0,T)\times \mathcal{O}_1$ gives 
$$ \widetilde{\sigma}_h=-\widetilde{\sigma}_p, \quad \mathrm{on} \quad (0,\infty)\times \mathcal{O}_1.$$
\end{proof}

\vspace{2mm}

\textbf{Proof of \cref{thm_appequivucp} :}
Let $\sigma$ be as stated in \cref{thm_appequivucp} with the splitting \eqref{split_v}. Then due to \cref{lemparabolicpart}, \cref{lemhyperpart} and \cref{lemextwhole}, it follows that $\widetilde{\sigma}\in C([0,\infty); L^2(0,\pi))$ is defined with the same expansion as in \eqref{expansion_v} for all $t\in (0,\infty)$ and  
\begin{equation}\label{eqextcondition}
 \widetilde{\sigma}=0 \quad \mathrm{in} \quad (0,\infty)\times \mathcal{O}_1.
 \end{equation}

Let $m\in\mathbb{N}$ be fixed. Then consider a function $\psi_m\in C^{\infty}(0,\pi)$ having support in $\mathcal{O}_1$ and $\left\langle\cos(m\cdot),\psi_m \right\rangle\neq 0. $
Now, we define
\begin{equation}\label{defn_F_m}
	F_m(t):=\left\langle \widetilde{\sigma}(t,\cdot),\psi_m \right\rangle=\sum\limits_{n=1}^{\infty}\sum\limits_{l=1}^{3}\frac{\alpha_{n,l}}{\psi_{n,l}}e^{\lambda_n^lt}\left\langle\cos(n\cdot),\psi_m \right\rangle + \left\langle\frac{\alpha_0}{\sqrt{b\pi}},\psi_m \right\rangle,\quad t\in (0,\infty).
\end{equation}
Since $\psi_m$ is smooth and the real parts of $\lambda_n^l$ are negative, using \eqref{cgs-theta_nl} and \eqref{est_alphanl}, the series in \eqref{defn_F_m} is uniformly convergent
for all $t\in (0,\infty)$. 

Also since $\psi_m$ has a support in $\mathcal{O}_1$, \eqref{eqextcondition} gives $F_m(t)=0$ for all $t\in (0,\infty)$.

Since $F_m\in L^\infty(0,\infty)$, taking the Laplace transform of $F_m$, we obtain
\begin{equation*}
	\hat{F}_m(\tau)=\int_{0}^{\infty}e^{-t\tau}F_m(t)\rd t,\quad \forall  \tau\in D,
\end{equation*}
where $D=\{\tau\in \mathbb{C}\mid \mathrm{Re}\, \tau>0\}$. 

The condition $F_m=0$ on $(0,\infty)$ gives $\hat{F}_m(\cdot)=0$ on $D$. 

Moreover, using the expansion \eqref{defn_F_m} and the fact that Re$\lambda_n^\ell < 0, \ell=1,2,3, n\in\mathbb{N}$, we obtain
\begin{equation*}
	\hat{F}_m(\tau)=\sum\limits_{n=1}^{\infty}\sum\limits_{\ell=1}^{3}\frac{\alpha_{n,\ell}}{\psi_{n,\ell}(\tau-\lambda_n^\ell)}\left\langle\cos(n\cdot),\psi_m \right\rangle + \frac{1}{\tau}\left\langle\frac{\alpha_0}{\sqrt{b\pi}},\psi_m \right\rangle,\quad\forall   \tau \in D.\:  
\end{equation*}
We can see that the above series converges uniformly on any compact subset of the complex plane not containing $\left\lbrace 0,\lambda_n^\ell, \ell=1,2,3 \right\rbrace_{n\in\mathbb{N}} $ and hence the map $\tau\rightarrow \hat{F}_m(\tau)$ can be extended analytically as a unique function in $\mathbb{C}\backslash\{0, \lambda_n^\ell, \, \ell=1,2,3, \, n\in \mathbb{N}\}$ with poles at $\left\lbrace 0,\lambda_n^\ell, \ell=1,2,3 \right\rbrace_{n\in\mathbb{N}} $ with the representation 
\begin{equation}\label{hatF}
	\hat{F}_m(\tau)=\sum\limits_{n=1}^{\infty}\sum\limits_{\ell=1}^{3}\frac{\alpha_{n,\ell}}{\psi_{n,\ell}(\tau-\lambda_n^\ell)}\left\langle\cos(n\cdot),\psi_m \right\rangle+ \frac{1}{\tau}\left\langle\frac{\alpha_0}{\sqrt{b\pi}},\psi_m \right\rangle,\quad\forall   \tau \in \mathbb{C}\backslash \{0, \lambda_n^\ell, \, \ell=1,2,3, \, n\in \mathbb{N}\}.
\end{equation}
Further, using the fact that $\hat{F}_m=0$ on $D$ and $\hat{F}_m$ is analytic on $\mathbb{C}\backslash \{0, \lambda_n^\ell, \, \ell=1,2,3, \, n\in \mathbb{N}\}$ containing $D$, from \eqref{hatF}, we have 
\begin{equation*}
	0=\sum\limits_{n=1}^{\infty}\sum\limits_{\ell=1}^{3}\frac{\alpha_{n,\ell}}{\psi_{n,\ell}(\tau-\lambda_n^\ell)}\left\langle\cos(n\cdot),\psi_m \right\rangle+ \frac{1}{\tau}\left\langle\frac{\alpha_0}{\sqrt{b\pi}},\psi_m \right\rangle,\quad\forall   \tau \in \mathbb{C}\backslash \{0, \lambda_n^\ell, \, \ell=1,2,3, \, n\in \mathbb{N}\}.
\end{equation*}
Now taking contours $\gamma_m^\ell$ enclosing $\lambda_m^\ell$, for $\ell=1,2,3$ and applying the residue theorem to the above expansion, we obtain
\begin{equation*}
	\frac{\alpha_{m,\ell}}{\psi_{m,\ell}}\left\langle\cos(m\cdot),\psi_m \right\rangle=0,\quad \ell=1,2,3.
\end{equation*}
Since $\left\langle\cos(m\cdot),\psi_m \right\rangle\neq 0$, we get  $\alpha_{m,\ell}=0$ for $\ell=1,2,3$.  Repeating this argument for any $m\in\mathbb{N}\cup\{0\}$, we get 
$\alpha_0=0= \alpha_{m,\ell},\quad \forall\, \ell=1,2,3,\quad \forall\, m\in\mathbb{N}.$
Hence from \eqref{est_alphanl}, we get $\left(\sigma^0,v^0,\tilde S^0 \right)^{\top}=(0,0,0)^{\top}$ in $(0,\pi)$.

Finally, \cref{thm_appequivucp} implies \cref{main_approximate}.

\section{Multiple eigenvalues}\label{secmultiev}

	Recall from (e) of \cref{spectrumofA} that $\mathcal{A}$ can have multiple eigenvalues. Here we will see that the results analogues to \cref{biorthonormality} and \cref{basis_representation} are also hold. Then we can apply our analysis to prove the \cref{thm_pos}.
	
	\begin{prop}\label{condndoubleroot}
		For each $n\in \mathbb{N}$, let us denote
		\begin{equation}\label{defnq}
			q_n(\lambda)=-b+\frac{  {\lambda}^2 }{\rho_sn^2} - \frac{\mu\kappa{\lambda}^2 }{\rho_s^2\left(  1+\kappa{\lambda}\right)^2}.
		\end{equation}
	Then $\lambda$ is a double root of the characteristic equation \eqref{equ-char} if and only if $q_n(\lambda)=0$.
	\end{prop}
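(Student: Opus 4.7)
The plan is to reduce the double-root condition to an equivalent polynomial system and then match it against $q_n(\lambda)=0$ through two successive algebraic simplifications. First I would use the standard fact that $\lambda$ is a double root of $\mathcal{F}_n$ if and only if both $\mathcal{F}_n(\lambda)=0$ and $\mathcal{F}_n'(\lambda)=0$. Observe from \eqref{equ-char} that $\lambda=0$ is never a root since $\mathcal{F}_n(0)=b\rho_s n^2/\kappa\neq 0$, and from the proof of \cref{spectrumofA}(d) we already know $\lambda=-1/\kappa$ is not a root either; hence throughout I may assume $\lambda\neq 0$ and $1+\kappa\lambda\neq 0$, which is crucial for clearing the denominator in $q_n$.

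The main algebraic move is to rewrite the characteristic polynomial in the grouped form
\begin{equation*}
\kappa\mathcal{F}_n(\lambda) \;=\; (1+\kappa\lambda)\bigl(\lambda^2 + b\rho_s n^2\bigr) \;+\; \frac{\mu n^2\lambda}{\rho_s}.
\end{equation*}
Assuming $\mathcal{F}_n(\lambda)=0$, this yields $\frac{\mu\kappa n^2\lambda^2}{\rho_s} = -\kappa\lambda(1+\kappa\lambda)(\lambda^2+b\rho_s n^2)$, which is precisely the combination appearing in $q_n$ after one multiplies through by $\rho_s^2 n^2(1+\kappa\lambda)^2$. Substituting and telescoping produces
\begin{equation*}
\rho_s^2 n^2(1+\kappa\lambda)^2\, q_n(\lambda) \;=\; \rho_s(1+\kappa\lambda)\bigl(2\kappa\lambda^3 + \lambda^2 - b\rho_s n^2\bigr),
\end{equation*}
so, because $1+\kappa\lambda\neq 0$, the vanishing of $q_n(\lambda)$ on the zero set of $\mathcal{F}_n$ is equivalent to the simpler cubic identity $2\kappa\lambda^3 + \lambda^2 - b\rho_s n^2 = 0$.

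It remains to verify that, still under $\mathcal{F}_n(\lambda)=0$, this cubic identity is equivalent to $\mathcal{F}_n'(\lambda)=0$. For the forward implication I would perform the Euclidean-type division $\mathcal{F}_n(\lambda) = \frac{\lambda}{3}\mathcal{F}_n'(\lambda) + R(\lambda)$ with explicit quadratic remainder $R(\lambda)=\frac{\lambda^2}{3\kappa}+\frac{2n^2 c\lambda}{3}+\frac{b n^2\rho_s}{\kappa}$ where $c=\frac{\mu}{\kappa\rho_s}+b\rho_s$; simultaneous vanishing of $\mathcal{F}_n$ and $\mathcal{F}_n'$ at $\lambda$ forces $R(\lambda)=0$, and eliminating $n^2 c$ via $\mathcal{F}_n'(\lambda)=0$ rearranges $R(\lambda)=0$ exactly into $2\kappa\lambda^3+\lambda^2 - b\rho_s n^2=0$. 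For the converse, substituting $b\rho_s n^2 = 2\kappa\lambda^3+\lambda^2$ into $\kappa\mathcal{F}_n(\lambda)=0$ and factoring out the nonzero $\lambda$ yields $3\kappa\lambda^2 + 2\lambda + \kappa n^2 c = 0$, i.e.\ $\kappa\mathcal{F}_n'(\lambda)=0$.

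The main obstacle is purely bookkeeping: tracking the $(1+\kappa\lambda)$ factors while clearing denominators and keeping the substitutions straight so that everything telescopes to a single cubic identity. Once these two reductions are in place, the chain of equivalences \textquotedblleft $\lambda$ is a double root of $\mathcal{F}_n$\textquotedblright\ $\Leftrightarrow$ $\{\mathcal{F}_n(\lambda)=0,\ \mathcal{F}_n'(\lambda)=0\}$ $\Leftrightarrow$ $\{\mathcal{F}_n(\lambda)=0,\ 2\kappa\lambda^3+\lambda^2-b\rho_s n^2=0\}$ $\Leftrightarrow$ $\{\mathcal{F}_n(\lambda)=0,\ q_n(\lambda)=0\}$ delivers the stated result.
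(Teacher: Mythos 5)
Your proof is correct, but it follows a genuinely different route from what the paper indicates. The paper's one-line proof says to use the root-coefficient (Vieta) relations \eqref{eq2.6}: if $\lambda=\lambda_n^2=\lambda_n^3$ and $\mu_0=\lambda_n^1$, eliminating $\mu_0$ from $\mu_0+2\lambda=-1/\kappa$ and $\mu_0\lambda^2=-b\rho_s n^2/\kappa$ gives exactly the auxiliary cubic $2\kappa\lambda^3+\lambda^2-b\rho_s n^2=0$, which is then combined with $\mathcal{F}_n(\lambda)=0$ to yield $q_n(\lambda)=0$. You instead invoke the derivative criterion $\mathcal{F}_n(\lambda)=\mathcal{F}_n'(\lambda)=0$ and a Euclidean division $\mathcal{F}_n=\tfrac{\lambda}{3}\mathcal{F}_n'+R$. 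Both arguments funnel through the same cubic identity, so they are substantively close, but yours is more systematic and handles the ``if and only if'' symmetrically: the Vieta route is tailor-made for the forward implication (double root $\Rightarrow q_n=0$), whereas your Euclidean-division framing makes the converse transparent too. I verified the key algebraic identity
\begin{equation*}
\rho_s^2 n^2(1+\kappa\lambda)^2\,q_n(\lambda) \;=\; \rho_s(1+\kappa\lambda)\bigl(2\kappa\lambda^3+\lambda^2-b\rho_s n^2\bigr)\quad\text{on }\{\mathcal{F}_n(\lambda)=0\},
\end{equation*}
the remainder $R(\lambda)=\tfrac{\lambda^2}{3\kappa}+\tfrac{2n^2c\lambda}{3}+\tfrac{b n^2\rho_s}{\kappa}$, and both directions of the equivalence with $\mathcal{F}_n'(\lambda)=0$; all check out. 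One small inaccuracy: the fact that $-1/\kappa$ is not a root of $\mathcal{F}_n$ is stated explicitly by the paper in the sentence after \eqref{equ-theta_nl} (and follows from $\mathcal{F}_n(-1/\kappa)=-\mu n^2/(\kappa^2\rho_s)\neq 0$), not from the proof of \cref{spectrumofA}(d), which is about a different claim; the fact itself is true, so this is only a citation slip.
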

From the relation between roots and coefficients in \eqref{eq2.6}, the above result follows by a direct calculation. 
	
	\vspace{2mm}
	
	\noindent\textbf{Case 1.} {\textit{$\mathcal{A}$ has double eigenvalues :}} Let for some $n\in \mathbb{N}$, $\mathcal{A}$ has eigenvalues $\lambda_n^1, \lambda_n^2, \lambda_n^3$ with $\lambda_n^2=\lambda_n^3 \left( \text{suppose }\lambda_n^2 \text{ is a double eigenvalue}\right) $.
In this case, all three eigenvalues for this $n$, are real. \cref{condndoubleroot} gives that $q_n(\lambda_n^2)=0$.

Now we consider $\xi_{n,1}$, an eigenfunction of $\mathcal{A}$ corresponding to the eigenvalue $\lambda_n^1$, $\xi_{n,2}$, an eigenfunction of $\mathcal{A}$ corresponding to the eigenvalue $\lambda_n^2$ and $\xi_{n,3}$, a generalized eigenfunction of $\mathcal{A}$	corresponding to the eigenvalue $\lambda_n^2$ :
	\begin{align*}
		&\xi_{n,1}=\frac{1}{\theta_{n,1}}\begin{pmatrix}
			-\cos(nx)\vspace{1mm}\\\frac{\lambda_n^1}{{\rho_s}n}\sin(nx)\vspace{1mm}\\\frac{\mu\lambda_n^1}{\rho_s\left( 1+\kappa\lambda_n^1\right) }\cos(nx)
		\end{pmatrix},\quad \xi_{n,2}=\frac{1}{\theta_{n,2}}\begin{pmatrix}
			-\cos(nx)\vspace{1mm}\\\frac{\lambda_n^2}{{\rho_s}n}\sin(nx)\vspace{1mm}\\\frac{\mu\lambda_n^2}{\rho_s\left( 1+\kappa\lambda_n^2\right) }\cos(nx)\end{pmatrix},\\
		& \xi_{n,3}=\frac{1}{\theta_{n,2}} \begin{pmatrix}
			\left( c_n-\frac{1}{\lambda_n^2}\right) \cos(nx)\vspace{1mm}\\-c_n\frac{\lambda_n^2}{\rho_s n}\sin(nx)\vspace{1mm}\\\frac{\mu\kappa\lambda_n^2-c_n\mu\lambda_n^2\left( 1+\kappa\lambda_n^2\right) }{\rho_s\left( 1+\kappa\lambda_n^2\right)^2 }\cos(nx)
		\end{pmatrix},\quad c_n=\frac{b\rho_s^2\left( 1+\kappa\lambda_n^2\right)^4+\mu\kappa^3\left( \lambda_n^2\right)^4  }{\lambda_n^2\left( 1+\kappa\lambda_n^2\right)\left( b\rho_s^2\left( 1+\kappa\lambda_n^2\right)^3+\mu\kappa^2\left( \lambda_n^2\right)^3\right) },
	\end{align*}
	where $\theta_{n,1}$ and $\theta_{n,2}$ are as same as in \eqref{equ-theta_nl}. We can see that $\left(\lambda_n^2\mathcal{I}-\mathcal{A} \right)\xi_{n,3}=\xi_{n,2} $.
	
	Similarly, we consider $\xi^*_{n,1}$, an eigenfunction of $\mathcal{A}^*$ corresponding to the eigenvalue $\lambda_n^1$, $\xi^*_{n,3}$, an eigenfunction of $\mathcal{A}^*$ corresponding to the eigenvalue $\lambda_n^2$ and $\xi^*_{n,2}$, a generalized eigenfunction of $\mathcal{A}^*$	corresponding to the eigenvalue $\lambda_n^2$ :
	
	\begin{align*}
		&\xi^*_{n,1}=\frac{1}{\psi_{n,1}}\begin{pmatrix}
			\cos(nx)\vspace{1mm}\\\frac{\lambda_n^1}{{\rho_s}n}\sin(nx)\vspace{1mm}\\-\frac{\mu\lambda_n^1}{\rho_s\left( 1+\kappa\lambda_n^1\right) }\cos(nx)
		\end{pmatrix},\quad \xi^*_{n,2}=\frac{1}{\psi_{n,2}}\begin{pmatrix}
			\frac{1}{\lambda_n^2}\cos(nx)\vspace{1mm}\\0\vspace{1mm}\\-\frac{\mu\kappa\lambda_n^2}{\rho_s\left( 1+\kappa\lambda_n^2\right)^2 }\cos(nx)
		\end{pmatrix},\\
		& \xi^*_{n,3}=\frac{1}{\psi_{n,2}}\begin{pmatrix}
			\cos(nx)\vspace{1mm}\\\frac{\lambda_n^2}{{\rho_s}n}\sin(nx)\vspace{1mm}\\-\frac{\mu\lambda_n^2}{\rho_s\left( 1+\kappa\lambda_n^2\right) }\cos(nx)\end{pmatrix},
	\end{align*}
	where $\psi_{n,1}$ is as same as in \eqref{equ-psi_nl} and $\psi_{n,2}\neq 0$ is given by 
	\begin{equation}
		\psi_{n,2}=\frac{1}{\theta_{n,2}}\left[ -\frac{\pi}{2}\left(\frac{b}{\lambda_n^2}+\frac{\mu\kappa^2\left( \lambda_n^2\right)^2}{\rho_s^2\left( 1+\kappa\lambda_n^2\right)^3} \right) \right].
	\end{equation}
We can see that $\left(\lambda_n^2\mathcal{I}-\mathcal{A}^* \right)\xi^*_{n,2}=\xi^*_{n,3} $.
	
The families $\left\lbrace \xi_{n,l}\: |\: 1\leq l\leq 3, n\in\mathbb{N}\right\rbrace$ and $\left\lbrace \xi^*_{n,l}\: |\: 1\leq l\leq 3, n\in\mathbb{N}\right\rbrace$ satisfy the bi-orthonormality relations \eqref{biortho} and \cref{basis_representation} holds.

\vspace{2.mm}	

\noindent \textbf{Case 2.} {\textit{$\mathcal{A}$ has triple eigenvalues :}} Let for some $n\in \mathbb{N}$, $\mathcal{A}$ has eigenvalues $\lambda_n^1, \lambda_n^2, \lambda_n^3$ such that $\lambda_n^1= \lambda_n^2=\lambda_n^3=\lambda_n \left(\text{ say}\right)$. 
In this case all eigenvalues for this $n$ are real. Using the relation between roots and coefficients \eqref{eq2.6}, we get
	\begin{align}\label{lambda=3k}
		&\lambda_n=-\frac{1}{3\kappa},\quad\frac{1}{\kappa^2}=27 b \rho_s n^2,\quad\mu=8b\kappa\rho_s^2.
	\end{align}
	We consider $\xi_{n,1}$, an eigenfunction of $\mathcal{A}$ corresponding to the eigenvalue $\lambda_n$, $\xi_{n,2}$, a generalized eigenfunction of $\mathcal{A}$ corresponding to the eigenvalue $\lambda_n$ and $\xi_{n,3}$, a generalized eigenfunction of $\mathcal{A}$	corresponding to the eigenvalue $\lambda_n$ :
	\begin{align*}
		&\xi_{n,1}=\frac{1}{\theta_{n}}\begin{pmatrix}
			-\cos(nx)\vspace{1mm}\\-\frac{1}{3\kappa{\rho_s}n}\sin(nx)\vspace{1mm}\\-\frac{\mu}{2\kappa\rho_s }\cos(nx)
		\end{pmatrix},\quad \xi_{n,2}=\frac{1}{\theta_{n}}\begin{pmatrix}
			\frac{3\kappa}{2}\cos(nx)\vspace{1mm}\\-\frac{1}{2\rho_sn}\sin(nx)\vspace{1mm}\\-\frac{3\mu}{2\rho_s }\cos(nx)\end{pmatrix},\quad \xi_{n,3}=\frac{1}{\theta_{n}}\begin{pmatrix}
			-\frac{27 \kappa^2}{4}\cos(nx)\vspace{1mm}\\-\frac{3\kappa}{4\rho_s n}\sin(nx)\vspace{1mm}\\-\frac{27\mu\kappa}{8\rho_s }\cos(nx)
		\end{pmatrix},
	\end{align*}
	where 
	\begin{equation}
		\theta_n=\sqrt{3\pi b }.
	\end{equation}
	Using \cref{condndoubleroot} and \eqref{lambda=3k}, we see that $\left(\lambda_n\mathcal{I}-\mathcal{A} \right)\xi_{n,2}=\xi_{n,1} $ and $\left(\lambda_n\mathcal{I}-\mathcal{A} \right)\xi_{n,3}=\xi_{n,2} $.
	
	Similarly, we consider $\xi^*_{n,1}$, a generalized eigenfunction of $\mathcal{A}^*$ corresponding to the eigenvalue $\lambda_n$, $\xi^*_{n,2}$, a generalized eigenfunction of $\mathcal{A}^*$ corresponding to the eigenvalue $\lambda_n$ and $\xi^*_{n,3}$, an eigenfunction of $\mathcal{A}^*$	corresponding to the eigenvalue $\lambda_n$ :
	
	\begin{align*}
		&\xi^*_{n,1}=\frac{1}{\psi_{n}}\begin{pmatrix}
			\frac{9\kappa^2}{4}\cos(nx)\vspace{1mm}\\\frac{9\kappa}{4\rho_s n}\sin(nx)\vspace{1mm}\\-\frac{9\mu\kappa}{4\rho_s }\cos(nx)
		\end{pmatrix},\quad \xi^*_{n,2}=\frac{1}{\psi_{n}}\begin{pmatrix}
			-3\kappa\cos(nx)\vspace{1mm}\\0\vspace{1mm}\\\frac{3\mu}{4\rho_s }\cos(nx)\end{pmatrix},\quad \xi^*_{n,3}=\frac{1}{\psi_{n}}\begin{pmatrix}
			\cos(nx)\vspace{1mm}\\-\frac{1}{3\kappa{\rho_s}n}\sin(nx)\vspace{1mm}\\\frac{\mu}{2\kappa\rho_s }\cos(nx)
		\end{pmatrix},
	\end{align*}
	where 
	\begin{equation}
		\psi_n=\frac{1}{\theta_n}\left( -\frac{27b\kappa^2\pi}{4}\right) .
	\end{equation}
	Using \cref{condndoubleroot} and \eqref{lambda=3k}, we see that $\left(\lambda_n\mathcal{I}-\mathcal{A}^* \right)\xi^*_{n,2}=\xi^*_{n,3} $ and $\left(\lambda_n\mathcal{I}-\mathcal{A}^* \right)\xi^*_{n,1}=\xi^*_{n,2} $.
	Here also we can see that the families $\left\lbrace \xi_{n,l}\: |\: 1\leq l\leq 3, n\in\mathbb{N}\right\rbrace$ and $\left\lbrace \xi^*_{n,l}\: |\: 1\leq l\leq 3, n\in\mathbb{N}\right\rbrace$ satisfy the bi-orthonormality relations \eqref{biortho} and \cref{basis_representation} holds.

Since in the case of multiple eigenvalues of $\mathcal{A}$, we obtain the result analogous to \cref{basis_representation} using the generalized eigenvectors of $\mathcal{A}$ and $\mathcal{A}^*$, the proof of the null controllability result and approximate controllability result  can be adapted to the case when $\mathcal{A}$ has multiple eigenvalues. The essential  modifications of the proof of the controllability results to this case are indicated below. 

\begin{rem}
In the Case 1, $\mathcal{A}_n$ in \cref{lem-An} and $\mathcal{B}_n$ in \cref{lem-Bn} are given by 
	\begin{equation}
		\mathcal{A}_n=\begin{pmatrix}
			\lambda_n^1&0&0\\
			0&\lambda_n^2&-1\\
			0&0&\lambda_n^2
		\end{pmatrix}, \quad \mathcal{B}_n= b\sqrt{\frac{\pi}{2}}\begin{pmatrix}
		\frac{1}{{\psi_{n,1}}}\\\frac{1}{{\lambda_n^2\psi_{n,2}}}\\\frac{1}{{\psi_{n,2}}}
	\end{pmatrix}.
	\end{equation}

In the Case 2, $\mathcal{A}_n$ in \cref{lem-An} and $\mathcal{B}_n$ in \cref{lem-Bn} are given by 
	\begin{equation}
		\mathcal{A}_n=\begin{pmatrix}
			\lambda_n&-1&0\\
			0&\lambda_n&-1\\
			0&0&\lambda_n
		\end{pmatrix},\quad \mathcal{B}_n= b\sqrt{\frac{\pi}{2}}\begin{pmatrix}
			\frac{9\kappa^2}{{4\psi_{n}}}\vspace{1mm}\\\frac{-3\kappa}{{\psi_{n}}}\vspace{1mm}\\\frac{1}{{\psi_{n}}}
		\end{pmatrix}.
	\end{equation}

We can see that in the both Cases 1 and 2, 
\begin{align*}
		\text{Rank}\left[\lambda I-\mathcal{A}_n \;;\; \mathcal{B}_n \right]=3, \: \text{ for }\lambda=\lambda^1_n, \lambda^2_n, \lambda^3_n. 
	\end{align*}
Thus the finite dimensional system  \eqref{opp-eqn_proj} is controllable. 

The rest of the proof of \cref{thm_pos} in the case of multiple eigenvalues is same as given in Section \ref{secnull}. 
\end{rem}

\begin{rem}\label{remmultipropunq}
In the Case 1, to prove Theorem \ref{thm_appequivucp}, let $m\in \mathbb{N}$ such that $\mathcal{A}$ has eigenvalues $\lambda_m^1, \lambda_m^2, \lambda_m^3$ with $\lambda_m^2=\lambda_m^3$. We note that 
$$ e^{tA^*}\xi^*_{m,2}= e^{t\lambda^2_m}\xi^*_{m,2}- te^{t\lambda^2_m}\xi^*_{m,3}.$$
Using the above expression, $F_m$ in \eqref{defn_F_m} can be modified accordingly and its Laplace transform for all  $\tau \in \mathbb{C}\backslash \{0, \lambda_n^\ell, \, \ell=1,2,3, \, n\in \mathbb{N}\}$ is 
\begin{equation*}
\hat{F}_m(\tau)= \Big[\frac{\alpha_{m,3}}{\psi_{m,3}(\tau-\lambda_m^2)}-\frac{\alpha_{m,2}}{\psi_{m,3}(\tau-\lambda_m^2)^2}+\frac{\alpha_{m,2}}{\psi_{m,2}\lambda^2_m(\tau-\lambda_m^2)}\Big]\left\langle\cos(m\cdot),\psi_m \right\rangle+ g(\tau),
\end{equation*}
where $g(\cdot)$ is analytic at $\tau=\lambda^2_m$.

 Further, as obtained in the proof of Theorem \cref{thm_appequivucp} in Section \ref{Approximate controllability}, we have 
$ \hat{F}_m(\tau)=0$ for all  $\tau \in \mathbb{C}\backslash \{0, \lambda_n^\ell, \, \ell=1,2,3, \, n\in \mathbb{N}\}$. 
Multiplying the above relation with $(\tau-\lambda^2_m)$ and taking contour around $\lambda^2_m$ and applying  Cauchy's Integral formula to $(\tau-\lambda^2_m)\hat{F}_m(\tau)$, we get that $ \alpha_{m,3}=0$.  Taking contour again around $\lambda^2_m$ and applying Cauchy's Integral formula to $\hat{F}_m(\tau)$, we obtain $\alpha_{m,2}=0$. 

In the Case 2, an analogous argument discussed above is applicable. 

The rest of the proof of \cref{thm_appequivucp} in the case of multiple eigenvalues is similar to that of \cref{thm_appequivucp} done in Section \ref{Approximate controllability}.

\end{rem}

\section{Appendix}\label{secappen}

Let us recall that $\{\mu_n\}_{|n|\ge N_0}$ defined in \eqref{defnmun}, a sequence of complex number satisfying \\
Hypothesis $\mathcal{H}_1$:
\begin{enumerate}
\item For all $|n|\ge N_0$, $\mathrm{Re}\, \mu_n=n\sqrt{(b\rho_s+\frac{\mu}{\kappa \rho_s})}+\delta_n$, and $\mathrm{Im} \, \mu_n=\frac{1}{2}(\omega_0+\frac{1}{\kappa})-\epsilon_n$ and 
$$ \epsilon_n\rightarrow 0, \quad \delta_n\rightarrow 0, \quad |n|\rightarrow \infty, \quad |\delta_n|\le \frac{1}{2}\sqrt{(b\rho_s+\frac{\mu}{\kappa \rho_s})}, \quad \forall\, |n|\ge N_0.$$

\item $\mathrm{Re}\,\mu_{n+1}-\mathrm{Re}\,\mu_n\ge \gamma>0, \quad \forall\, |n|\ge N_0.$
\end{enumerate}

We have the following Ingham inequality for complex frequencies with convergent real part. The proof is a minor modification of Ingham's original argument 
\cite{ingham1936some} in the case when $\mathrm{Im}\;\mu_n=0$. A version of this inequality was used in \cite{Renardy05} and \cite{chowdhury2014null}.
\begin{prop}\label{propI-4}
Let $N_0$ and $\{\mu_n\}_{|n|\ge N_0}$ satisfy Hypothesis $\mathcal{H}_1$. Let $ T> \frac{2\pi}{\gamma}$.
There exist $M\ge N_0$ and positive constants $C$ and $C_1$ depending on $T$ such that for
$ g(t)= \displaystyle\sum_{|n|\ge M}{\beta_n e^{i\mu_n t}}$ with
$\displaystyle{\sum_{|n|\ge M}}|\beta_n|^2<\infty$, the following inequality holds:
\begin{equation}\label{I-4}
 C\sum_{|n|\ge M}|\beta_n|^2\leq \int_0^T|g(t)|^2dt\leq C_1\sum_{|n|\ge M}|\beta_n|^2.
\end{equation}
\end{prop}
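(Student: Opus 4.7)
The plan is to reduce this complex-frequency Ingham inequality to the classical real-frequency version of \cite{ingham1936some} via a two-step perturbation argument. Writing $\mu_n = a_n + i b_n$ with $a_n := \mathrm{Re}\,\mu_n$ and $b_n := \mathrm{Im}\,\mu_n$, the essential observation is that $b_n$ tends to the fixed limit $b_\infty := \tfrac{1}{2}(\omega_0+1/\kappa)$, so the bounded multiplier $t\mapsto e^{b_\infty t}$ is a bi-Lipschitz bijection on $L^2(0,T)$ with norms controlled by $e^{|b_\infty|T}$. Hence it suffices to establish \eqref{I-4} for
\begin{equation*}
\tilde g(t) := e^{b_\infty t} g(t) = \sum_{|n|\geq M} \beta_n e^{\epsilon_n t}\, e^{i a_n t},
\end{equation*}
where $\epsilon_n := b_\infty - b_n$ is exactly the sequence from Hypothesis $\mathcal{H}_1(1)$ and tends to $0$ as $|n|\to\infty$.

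Next, I would split $\tilde g = g_0 + R$ with unperturbed piece $g_0(t) := \sum \beta_n e^{i a_n t}$ and remainder $R(t) := \sum \beta_n(e^{\epsilon_n t}-1)e^{i a_n t}$. By Hypothesis $\mathcal{H}_1(2)$ the real frequencies $\{a_n\}_{|n|\geq M}$ satisfy the uniform gap $a_{n+1}-a_n\geq \gamma$, and since $T>2\pi/\gamma$ the classical Ingham inequality supplies constants $0<C_0\leq C_1$, depending only on $T$ and $\gamma$, with
\begin{equation*}
C_0\sum_{|n|\geq M}|\beta_n|^2 \leq \|g_0\|_{L^2(0,T)}^2 \leq C_1\sum_{|n|\geq M}|\beta_n|^2.
\end{equation*}
The decomposition is engineered so that enlarging $M$ will drive $\|R\|_{L^2(0,T)}$ to zero while leaving $C_0, C_1$ untouched.

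The estimation of $R$ is the core of the argument: because the coefficients $(e^{\epsilon_n t}-1)$ depend on $t$, one cannot invoke Ingham on $R$ directly. I would instead expand $e^{\epsilon_n t}-1=\sum_{k\geq 1}(\epsilon_n t)^k/k!$ and interchange summations to obtain $R(t)=\sum_{k\geq 1}(t^k/k!)\, R_k(t)$ with $R_k(t):=\sum_{|n|\geq M}\epsilon_n^k\beta_n\,e^{i a_n t}$. Applying the classical Ingham upper bound to each $R_k$ separately yields $\|R_k\|_{L^2(0,T)}\leq C_1^{1/2}(\epsilon_M^*)^k\|\beta\|_{\ell^2}$ with $\epsilon_M^* := \sup_{|n|\geq M}|\epsilon_n|$, and the triangle inequality in $L^2(0,T)$ then gives the geometric bound
\begin{equation*}
\|R\|_{L^2(0,T)}\leq C_1^{1/2}\,(e^{T\epsilon_M^*}-1)\,\|\beta\|_{\ell^2}.
\end{equation*}

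To conclude, I would choose $M$ so large that $C_1^{1/2}(e^{T\epsilon_M^*}-1)\leq \tfrac{1}{2}C_0^{1/2}$—possible since $\epsilon_M^*\to 0$—whereupon the triangle inequality $\|\tilde g\|_{L^2}\geq \|g_0\|_{L^2}-\|R\|_{L^2}$ delivers the lower bound of \eqref{I-4} for $\tilde g$, and a parallel computation $\|\tilde g\|_{L^2}\leq \|g_0\|_{L^2}+\|R\|_{L^2}$ yields the upper bound. Undoing the multiplication by $e^{b_\infty t}$ then produces the asserted inequality for $g$. The main obstacle is precisely the handling of $R$, and the key idea is that the perturbation enters only through the smooth multiplier $e^{\epsilon_n t}$, which the Taylor expansion converts into a geometrically convergent sum of \emph{genuine} exponential series $R_k$ each of which satisfies the hypotheses of classical Ingham.
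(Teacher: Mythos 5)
Your argument is correct, but it takes a genuinely different route from the paper. The paper re-proves Ingham's inequality from scratch: it inserts the weight $h(t)=\cos(t/2)$, forms the double sum $\sum_{k,n}\beta_k\overline{\beta_n}K(\theta_k-\overline{\theta_n})$, and shows that for $M$ large the off-diagonal contribution is dominated by the diagonal, using explicit estimates of the Fourier kernel $K$ evaluated at \emph{complex} arguments (see the bounds involving $\cosh(T\epsilon)$ in \eqref{I2-4}--\eqref{I3-4}). You instead treat the problem as a bounded perturbation of the \emph{classical} real-frequency Ingham inequality: after peeling off the constant damping $e^{b_\infty t}$ (which is the paper's Step 2), you split $\tilde g=g_0+R$, apply classical Ingham to $g_0$, and control $R$ by Taylor-expanding $e^{\epsilon_n t}-1$, applying the upper Ingham bound separately to each auxiliary series $R_k(t)=\sum \epsilon_n^k\beta_n e^{ia_n t}$, and summing the geometric tail $\sum_{k\ge 1}(T\epsilon_M^*)^k/k!=e^{T\epsilon_M^*}-1$. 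Your decomposition exploits that the Ingham constants depend only on the gap $\gamma$ and on $T$, not on $M$, so choosing $M$ large makes $\|R\|_{L^2(0,T)}$ arbitrarily small without disturbing the constants for $g_0$; this makes the classical theorem available as a black box and avoids redoing the kernel estimates. One minor point worth making explicit in a write-up: the interchange of the sum over $n$ with the Taylor series in $k$ should be justified by first working with finite truncations in $n$ and then passing to the limit using the uniformity (in the truncation) of the resulting $L^2$ bound; this is the same kind of care the paper implicitly takes. What the paper's self-contained computation buys is independence from a specific formulation of the classical theorem and more transparent tracking of how large $M$ must be; what your argument buys is brevity and a conceptual separation between the ``real-frequency core'' and the ``vanishing perturbation.''
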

\begin{proof}
We shall prove the left side inequality of \eqref{I-4} in two steps. In the first step we assume $(\omega_0+\frac{1}{\kappa})=0$ and in the next step we consider
 $(\omega_0+\frac{1}{\kappa})$ as a nonzero real number.

 Step 1. Let $T>\frac{2\pi}{\gamma}$. Our claim is that there exist $M\ge N_0$ and a positive $C$ such that
 $$C\sum_{|n|\ge M}|\beta_n|^2\leq \int_0^T|g(t)|^2dt,$$
 for $ g(t)= \sum_{|n|\ge M}{\beta_n e^{i\mu_nt}}$ with $\displaystyle{\sum_{|n|\ge M}}|\beta_n|^2<\infty$, where $ \mu_n= n\sqrt{(b\rho_s+\frac{\mu}{\kappa \rho_s})}+\delta_n-i\epsilon_n$. 
 
 Notice that we have $$ Re\,\mu_{n+1}-Re\,\mu_{n}\ge \gamma>0, \quad \forall \; |n|\ge N_0, \quad \mbox{ and}\quad T>\frac{2\pi}{\gamma}.$$
 We reduce the problem to the case $ T= 2\pi$ and $\gamma>1$.
Denoting $\theta_n=\frac{T}{2\pi}\mu_n $, it follows that $ Re\, \theta_{n+1}-Re\, \theta_{n}=\frac{T\gamma}{2\pi} > 1 $.
 Then
 $$ \int_0^T|g(t)|^2dt=\int_0^T |\sum_{|n|\ge M}{\beta_n e^{i\mu_nt}}|^2dt= \frac{T}{2\pi}
 \int_0^{2\pi}|\sum_{|n|\ge M}{\beta_n e^{i\theta_n s}}|^2\,ds.$$

Now we prove that there exists $C'_1>0$ such that
\begin{equation}\label{eqI1-4} C'_1\sum_{|n|\ge M}|\beta_n|^2\leq \int_{-\pi}^\pi|\sum_{|n|\ge M}{\beta_n e^{i\theta_nt}}|^2dt.\end{equation}
Define  the function
$$ h:\R\rightarrow \R, \quad h(t)= \left\{\begin{array}{ll} \cos(\frac{t}{2}), & |t|<\pi, \\ 0, & |t|>\pi. \end{array}\right.$$
Its Fourier transform $K(\phi)$ is
$$ K(\phi)= \int_{-\pi}^\pi h(t)e^{it\phi}dt= \frac{4\cos(\pi \phi)}{1-4\phi^2}. $$
Since $0\leq h(t)\leq 1$ for any $t\in [-\pi,\pi],$  we have that
\begin{eqnarray*}
 \lefteqn{\int_{-\pi}^\pi|\sum_{|n|\ge M}{\beta_n e^{i\theta_nt}}|^2dt\geq \int_{-\pi}^{\pi}h(t)|\sum_{|n|>N}{\beta_n e^{i\theta_nt}}|^2dt}\\
 & & \hspace{3cm} = \int_{-\pi}^{\pi}h(t)\sum_{|k|\ge M}\sum_{|n|\ge M}\beta_k\overline\beta_ne^{i(\theta_k-\overline{\theta_n})t}dt\\
 & & \hspace{3cm} = \sum_{|k|\ge M}\sum_{|n|\ge M}\beta_k\overline\beta_n K(\theta_k-\overline{\theta_n})\\
 & & \hspace{3cm} = \sum_{|k|\ge M}|\beta_k|^2K(\theta_k-\overline{\theta_k})+\sum_{|k|\ge M}\sum_{|n|\ge M, n\neq k}\beta_k\overline{\beta_n}K(\theta_k-\overline{\theta_n})\\
 & & \hspace{3cm} = I+II.
\end{eqnarray*}
Our aim is to show $ |II|<(1-\eta)I$, for some $\eta\in (0,1)$. Note that $|K(\theta_k-\overline{\theta_k})|= K(\theta_k-\overline{\theta_k})$ and $ |K(\theta_k-\overline{\theta_n})|= |K(\theta_n-\overline{\theta_k})|$.
Then by the Cauchy-Schwartz inequality we have
\begin{eqnarray*} \lefteqn{|II|= |\sum_{|k|\ge M}\sum_{|n|\ge M, n\neq k}\beta_k\overline{\beta_n}K(\theta_k-\overline{\theta_n})|}\\
& & \leq \sum_{|k|\ge M}\sum_{|n|\ge M, n\neq k}\frac{|\beta_k|^2+|\beta_n|^2}{2}
|K(\theta_k-\overline{\theta_n})|=\sum_{|k|\ge M}|\beta_k|^2\sum_{|n|\ge M, n\neq k}|K(\theta_k-\overline{\theta_n})|.
\end{eqnarray*}
Therefore finally we have to show that for some $\eta\in (0,1)$
\begin{equation}\label{I1-4}
\sum_{|n|\ge M, n\neq k}|K(\theta_k-\overline{\theta_n})|\leq (1-\eta)K(\theta_k-\overline{\theta_k}), \quad \forall |k|\ge M.
\end{equation}
By calculation, since $|\epsilon_n|\rightarrow 0$ and $|\delta_n|\rightarrow 0$ as $|n|\rightarrow\infty$, choosing large enough $M\geq N_0$, we get that
\begin{equation}\label{I2-4}
|K(\theta_k-\overline{\theta_n})|\leq \frac{4\cosh(\frac{T}{2}(\epsilon_k+\epsilon_n))}{4(k-n)^2\vartheta^2-1}, \quad \forall\, |n|\ge M, \, |k|\ge M, \, k\neq n, 
\end{equation}
for some $\vartheta$ such that $1<\vartheta<\frac{T\gamma}{2\pi}$, 
 and \begin{equation}\label{I3-4}
      K(\theta_k-\overline{\theta_k})=\frac{4\cosh(T\epsilon_k)}{1+\frac{4T^2}{\pi^2}\epsilon_k^2}, \quad \forall\, |k|\ge M.
      \end{equation}
Since $ \epsilon_n\rightarrow 0$ as $|n|\rightarrow\infty$, for a given $\delta>0$, by choosing $M\geq N_0$ suitably further large we get $|\epsilon_n|<\frac{2\pi}{T}\delta$ for all $|n|\ge M$. Then  \eqref{I2-4} and \eqref{I3-4}
gives
$$ \sum_{|n|\ge M,n\neq k}|K(\theta_k-\overline{\theta_n})|<\frac{4}{\vartheta^2}\cosh(2\pi\delta);\quad K(\theta_k-\overline{\theta_k})\geq\frac{4}{1+16\delta^2}.$$
As $ \vartheta>1$ we can choose $\delta$ sufficiently small such that for some $\eta \in (0,1)$
$$ \frac{4}{\vartheta^2}\cosh(2\pi\delta)< \frac{4(1-\eta)}{1+16\delta^2},$$
which implies \eqref{I1-4}. Thus \eqref{eqI1-4} is proved. Since we have
$$ \int_0^{2\pi}|\sum_{|n|\ge M}{\beta_n e^{i\theta_ns}}|^2ds=\int_{-\pi}^{\pi}|\sum_{|n|\ge M}{\beta_n e^{i\theta_n(t+\pi)}}|^2dt,$$
the inequality \eqref{eqI1-4} and  the existence of a positive lower bound of $|e^{i\theta_n\pi}|^2$, for all $|n|\ge M$
(due to the converging real part of $\; i\theta_n\pi\;$) gives
$$ C\sum_{|n|\ge M}|\beta_n|^2\leq C'_1\sum_{|n|\ge M}|\beta_n|^2|e^{i\theta_n\pi}|^2\leq
\int_{-\pi}^\pi|\sum_{|n|\ge M}{\beta_n e^{i\theta_n(t+\pi)}}|^2dt,$$
and hence
\begin{equation}\label{I4-4}
 C\sum_{|n|\ge M}|\beta_n|^2\leq \int_0^T|g(t)|^2dt.
\end{equation}

Step 2. Now consider $ g(t)= \sum_{|n|\ge M}{\beta_n e^{i\mu_nt}}$ with $ i\mu_n= -(\omega_0+\frac{1}{\kappa})+\varepsilon_n+i\Big(n\sqrt{(b\rho_s+\frac{\mu}{\kappa\rho_s})}+\delta_n\Big)$ and $(\omega_0+\frac{1}{\kappa})\neq 0 $.
Then  we have
\begin{equation}\label{I5-4}
 \int_0^T|e^{(\omega_0+\frac{1}{\kappa})t}g(t)|^2dt= \int_0^T(e^{2(\omega_0+\frac{1}{\kappa})t})|g(t))|^2dt\leq c\int_0^T|g(t)|^2dt.
\end{equation}
Now we  have inequality \eqref{I4-4} for $\int_0^T|e^{(\omega_0+\frac{1}{\kappa})t}g(t)|^2dt.$  From that we conclude the left inequality of \eqref{I-4}.

The right hand side inequality of \eqref{I-4} can be proved in the same fashion as in the standard case of Ingham's
inequality. For details, one may see the proof of the
Ingham inequality in
\cite{ingham1936some} or in the lecture note \cite{micu2004introduction} (Chapter 4, Theorem 4.2).
\end{proof}

\bibliographystyle{siam}
\bibliography{Maxwell_zero}{}
	
\end{document}